\newcommand{\vertiii}[1]{{\left\vert\kern-0.25ex\left\vert\kern-0.25ex\left\vert #1 
    \right\vert\kern-0.25ex\right\vert\kern-0.25ex\right\vert}}
\newcommand{\stars}{}% just for safety
\DeclareRobustCommand{\stars}[1]{\stars@{#1}}
\newcommand{\stars@}[1]{%
  \ifcase#1\relax\or\stars@one\or\stars@two\or\stars@three\or\stars@four
  \else ??\fi
}
\newcommand{\stars@char}{$\scriptstyle*$}
\newcommand{\stars@base}[1]{%
  $\m@th\vcenter{\offinterlineskip\ialign{\hfil##\hfil\cr#1\crcr}}$%
}
\newcommand{\stars@one}{%
  \stars@base{\stars@char}%
}
\newcommand{\stars@two}{%
  \stars@base{\stars@char\cr\stars@char}%
}
\newcommand{\stars@three}{%
  \stars@base{\stars@char\cr\stars@char\stars@char}%
}
\newcommand{\stars@four}{%
  \stars@base{\stars@char\stars@char\cr\stars@char\stars@char}%
}
\newcommand*\dif{\mathop{}\!\mathrm{d}}
\newcommand{\Prob}{\mathbb{P}}
\newcommand{\R}{\mathbb{R}}
\newtheorem*{ack*}{Acknowledgements}
\newtheorem{remark}{Remark}
\newtheorem{corollary}{Corollary}
\newtheorem{definition}{Definition}
\newtheorem{proposition}{Proposition}
\newtheorem{theorem}{Theorem}
\newtheorem{thm}{Theorem}
\newtheorem{Lemma}{Lemma}
\newtheorem{assumption}[thm]{Assumption}
\newtheorem*{assumption*}{Assumption}
\begin{document}

\title{Some limit theorems for locally stationary Hawkes processes \vspace{-1em}}
\author{\footnotesize Thomas Deschatre$^1$, Pierre Gruet$^1$, Antoine Lotz$^{1,2, \dagger}$ $\footnote{\hspace{-0.7em} $\dagger$ \textup{Corresponding author}:  \texttt{antoine.lotz@dauphine.eu}. }$ \vspace{-0.1em}}
\address{\footnotesize	 $^1$\textsc{EDF}-\textsc{Lab} ,  \textsc{FiME}, Palaiseau, France
\\\vspace{-0.2em}
$^2$\textsc{Paris-Dauphine University}, \textsc{PSL}, Paris, France}
\begin{abstract}
     We prove a law of large numbers and functional central limit theorem for a class of multivariate Hawkes processes with time-dependent reproduction rate.  We address the difficulties induced by the use of non-convolutive Volterra processes by recombining classical martingale methods introduced in Bacry \textit{et al.}~\cite{bacrylimit} with novel ideas proposed by Kwan \textit{et al.}~\cite{Kwan1}. The asymptotic theory we obtain yields useful applications in financial statistics. As an illustration, we derive closed-form expressions for price distortions under liquidity constraints.\\
   
   \noindent \textbf{Mathematics Subject Classification (2020)}: 60F05, 60G55, 62M10, 62P05.\\
\noindent \textbf{Keywords } \vspace{-1em}:  Hawkes processes. Locally stationary processes. Limit Theorems.   
\end{abstract}

\maketitle

\section{Introduction}

\subsection{Motivation and relation to other works}
Hawkes processes are a class of self-interacting point processes. Their applications span numerous fields ranging from neuroscience~\cite{Reynaud}~\cite{Martinez} to financial econometrics~\cite{bacrymicrostructure}~\cite{DeschatreGruet}. In all generality, a multivariate point process $(\boldsymbol{N}_t)=(N_{1,t}, \hdots, N_{p,t})$ records discrete event-times $(t^k_i)$ occurring in continuous time along $p$ components according to
\begin{equation}\label{equ:a_point_process}
    N_{k,t}
    = 
    \sum_{i=1}^{\infty}\mathbb{1}_{ \{ t^k_i \leq t \}}.
\end{equation} 
The law of such process is characterised by its predictable intensity $\lambda_t=(\lambda_{1,t}, \cdots, \lambda_{p,t})$, informally understood as an instantaneous probability of event-time arrival  
\begin{equation}\label{equ:informal_intensity}
    \Prob \big[ N_{k,t} \textup{  jumps in } [t,t + \dif t] \big\lvert \mathcal{F}_t \big]
    =
    \lambda_{k,t} \dif t,
\end{equation}
where $(\mathcal{F}_t)$ is a history of the process. The intensity of the Hawkes process typically takes the form
\begin{equation*}
    \lambda_{k,t}
    =
    \Phi_k \Big[ 
    \mu_k
    +
    \sum_{l=1}^p
    \int_0^t
    \varphi_{kl}(t-s) \dif N_{l,s}
    \Big],
\end{equation*}

where the $\Phi_k \colon \R \mapsto \R^+$ are \textit{activation functions}, the $\mu_k \in \R$ \textit{baseline intensities} and the function $\varphi=(\varphi_{kl}) \colon [0;\infty) \mapsto \R^{p \times p}$ the \textit{kernel} of the process. We refer to section~\ref{section:assumptions} for a rigorous definition. When $p=1$ and $\Phi_1(x)=x$, the dynamic of event-times arrival is that of a population process with Poisson immigration at intensity $\mu_1$ and Galton-Watson reproduction at rate $\lVert \varphi \rVert_{L^1(\dif s)} = \int_0^{\infty} \varphi(s) \dif s$ (see Hawkes \& Oakes~\cite{Oakes}). Under the condition $\lVert \varphi \rVert_{L^1(\dif s)} < 1$, the process is known to reach stationarity as $T \to \infty$, yielding useful approximations and convergence guarantees in statistical inference procedure (see e.g Ogata~\cite{OgataMLE}). It proves limiting in practice however, as the distribution of the phenomenon at hand often fluctuates in time. \\

Generalisations of the Hawkes process have thus arisen in an effort to accommodate time- dependent dynamics. The process is then most often coerced to live on a compact timeframe $[0,T]$ and depends on the double index $(t,T)$.  Chen \& Hall~\cite{FengInferenceForNonStationarySEPP} first considered an univariate linear Hawkes process with varying baseline $ t \in [0,T] \mapsto \mu(\frac{t}{T}) \in [0,\infty)$, allowing the rate of cluster formation to oscillate in time. Roueff \textit{et al.}~\cite{ROUEFF1}~\cite{roueff2} further generalised the model, allowing $\varphi$ to depend on $\frac{t}{T}$ as well -- resulting in the so-called \textit{locally stationary} Hawkes process. This paper explores asymptotic properties of a class of locally stationary Hawkes processes with intensities
\begin{equation}\label{equ:model_intensity}
    \lambda^T_{k,t}
    =
    \mu_k \big(\frac{t}{T}\big) 
    +
    \int_0^t
    g\big(\frac{t}{T} \big)
    \varphi_{kl}(t-s) 
    \dif N_{l,s}, \hspace{0.2cm} t \in [0,T],
\end{equation}

where, for any $k,l= 1\hdots p$, $\mu_k \colon [0,1] \mapsto [0,\infty)$, $g \colon [0,1] \mapsto [0,\infty)$ and $\varphi_{kl} \colon [0,\infty) \mapsto [0,\infty)$. The asymptotic behaviour of the process is well-known in the standard case where $g$ is constant. Functional limit theorems were first proved by Bacry \textit{et al.}~\cite{bacrylimit} for the multivariate linear Hawkes process with constant baseline $(\mu_k)$, and later extended by Deschatre \& Gruet~\cite{DeschatreGruet} to varying baselines.  Their results take the form of a uniform convergence for the normalised process $T^{-1} (
 N_{Tu})_{u \in [0,1]}$ and of the weak convergence of the rescaled process $T^{-\frac{1}{2}} ( N_{Tu}- \mathbb{E} [ N_{Tu}] )_{u \in [0,1]}$ towards a diffusive limit. Analogous results were proven under the mild finite first moment assumption $\int_0^{\infty} t \lVert \varphi \rVert (t) \dif t < \infty $ for non linear activation functions and a constant baseline, by Zhu~\cite{Zhu_nonlinearTCL} in the univariate strictly self-exciting case $\varphi>0$ and by Cattiaux \textit{et al.}~\cite{CATTIAUX2022404} in the more general multivariate and possibly inhibiting case. Strong mixing rates covering the varying baseline case were moreover obtained by Cheysson \textit{et al.}~\cite{Cheysson} under a similar assumption. The asymptotic theory of the varying baseline case is completed by the three recent works of Kwan \textit{et al.}~\cite{Kwan1}~\cite{Kwan2}~\cite{Kwan3}, whom derive general ergodic theorems for statistical inference motives, and introduce a general approximation strategy for non stationary point processes. \\

 Regarding varying kernels finally, a central limit Theorem was obtained by Fierro \textit{et al.}~\cite{Fierro_Leiva_Møller_2015}  in the context of a cluster-like construction of $(N_t)$, wherein the kernel is replaced at each generation by a new function.  As for locally stationary Hawkes processes, neither functional approximations nor ergodic Theorems are known, owing perhaps to the difficulty associated with treating non-convolutive Volterra processes.

 \subsection{Main contribution} We aim to reconcile the model~\eqref{equ:model_intensity} with the general asymptotic theory for Hawkes processes.
Our main results in section~\ref{section:limit_theorems} are a functional law of large number (Theorem~\ref{Th:LLN}) and a central limit Theorem (Theorem~\ref{theorem:FTCL}). Theorem~\ref{Th:LLN} states the uniform convergence
\begin{equation*}
    \sup_{u \in [0,1]}
    \Big\lVert 
    \frac{1}{T}N_{Tu}
    -
    \int_0^u ( \boldsymbol{Id} - \boldsymbol{K}(x))^{-1} \mu(x) \dif x 
    \Big\rVert 
    \xrightarrow[T \to \infty]{}
    0,
\end{equation*}

where $\boldsymbol{K}(x)= g(x) \int_0^{\infty} \varphi(s) \dif s$, and can be regarded as a generalisation of the law of large numbers of Bacry \textit{et al.}~\cite{bacrylimit} and of the ergodic Theorem of Kwan \textit{et al.}~\cite{Kwan2}. Theorem~\ref{theorem:FTCL} establishes the weak convergence in Skorokhod topology of the rescaled process $ T^{-\frac{1}{2}}
    \{ 
    N_{Tu}
    -
    \mathbb{E}\big[ N_{Tu}]
    \}$ , $u \in [0,1]$,  towards the diffusive limit\begin{equation}\label{equ:FTCL_intro}
    \int_0^u (\boldsymbol{Id} - \boldsymbol{K}(x) )^{-1} \boldsymbol{\Sigma}^{\nicefrac{1}{2}}(x) \dif W_x, 
\end{equation}

 where $(W_x)$ is a standard Brownian motion, and $\boldsymbol{\Sigma}$ maps $x \in [0,1]$ to the diagonal matrix with coefficients $(\boldsymbol{Id} - \boldsymbol{K}(x))^{-1} \mu(x)$. This again extends the limit Theorems of Bacry \textit{et al.}~\cite{bacrylimit}, and further generalises their prior extension by Deschatre \& Gruet~\cite{DeschatreGruet}.  Our framework however differs from these two prior approaches, in that the kernel $\kappa(t,s) = g( \nicefrac{t}{T}) \varphi(t-s)$ escapes the convolution-specific techniques on which they depend.  We rely instead on the same broad principle as Kwan, Chen \& Dunsmuir~\cite{Kwan2}. More specifically, while the technical basis for our proofs differs fundamentally from the coupling-like arguments in~\cite{Kwan2}, we find that their approximation strategy based on a grid refining at rate $o(T)$  can be repurposed to produce asymptotic results for the class of non-convolutive Volterra resolvent associated with our kernel.  We isolate in Proposition~\ref{prop:deterministic_convergence} this deterministic result, from which Theorem~\ref{Th:LLN} deduces. Obtaining Theorem~\ref{theorem:FTCL} is more involved.  The key to the proof consists in remarking the sampling scheme in Theorem 3 of Bacry \textit{et al.}~\cite{bacrylimit} refines at the exact same speed  as the approximation grid of the \textsc{kcd} approach. This analogy proves quite productive, as it enables a careful recombination of the two approaches from which Theorem~\ref{theorem:FTCL} follows.

\section{Locally stationary Hawkes processes}\label{section:assumptions}

\subsection{Definition.} We work on a rich enough probability space $(\Omega, \mathcal{F}, \Prob)$ where a family of integer-valued random measures $ \boldsymbol{N} = ( N_1, \cdots, N_p )$ on the real Borel sets $ \mathcal{B}(\R) $ are defined by
\begin{equation*}
    N_k(C) 
    =
    \sum_{n=1}^{\infty}
    \mathbb{1}_{ \{ t_n^k \in C \}},
\end{equation*}
where $(t^1_n)_{n \in \mathbb{N}^\star}, \hdots, (t^p_n)_{n \in \mathbb{N}^\star}$ are random sequences with values in $[0,\infty]$, such that $0 < t^k_{n} < t^k_{n+1} $ for every $n \in \mathbb{N}^\star$ and $k = 1 \hdots p$.
The multivariate point process $\boldsymbol{N}_t = ( N_{1,t}, \cdots, N_{p,t})$ with counting measures $\boldsymbol{N}$ is then defined by $N_{k,t} =  N_k([0,t])$ consistently with~\eqref{equ:a_point_process}.  We will also write $\dif N_{k,s}$ for the measure $N_k(\dif s)$. The history of $\boldsymbol{N}_t$ is the filtration $(\mathcal{F}_t)$ generated by the process until $t$, that is $\mathcal{F}_t = \sigma \{ N(C)  \hspace{0.1cm} 
 \lvert \hspace{0.1cm} C \subset [0,t] \cap \mathcal{B}(\R) \}$. For any $(\mathcal{F}_t)$-progressively-measurable process $\lambda_t= (\lambda_{1,t}, \cdots, \lambda_{p,t})$, $\boldsymbol{N}_t$ is said to admit $(\lambda_t)$ as its intensity if, for any $t>s>0$, $k =  1\hdots p$,
 \begin{equation}\label{equ:martingale}
     \mathbb{E} \big[
        N_k ((s,t]) 
     \big\lvert 
     \mathcal{F}_t
     \big]
     =
     \mathbb{E} \Big[ \int_s^{t}  \lambda_{k,u} \dif u
     \Big\lvert 
     \mathcal{F}_t
     \Big].
 \end{equation}
  Up to some extension of the underlying probability space, we may suppose $p$ independent Poisson measures $\pi_1, \cdots \pi_p$ on $\R^2$ with unit intensity are defined on $(\Omega, \mathbb{F}, \Prob)$, and denote by $(\mathcal{F}_t^\pi)$ their natural filtration. We then have the following existence result (see Brémaud \& Massoulié~\cite{bremaud1996stability}).
\begin{proposition}\label{Proposition:thinning}
      For any $(\mathcal{F}_t^\pi)$--progressively measurable process  $(\lambda_ {k,t})$, the process defined by 
    \begin{equation}\label{equ:thinning}
        N_{k,t}
        =
        \int_0^t \int_0^{\infty}
        \mathbb{1}_{ \{ x \leq \lambda_{k,t} \}}
        \pi_{k,t}( \dif x , \dif s)
    \end{equation}
    admits $(\lambda_{k,t})$ as its intensity. 
\end{proposition}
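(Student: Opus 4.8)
The plan is to verify directly the defining relation \eqref{equ:martingale} (the conditioning being with respect to $\mathcal{F}_s$) for the process $(N_{k,t})$ produced by the thinning formula \eqref{equ:thinning}, using only that the unit-intensity Poisson measure $\pi_k$ on $\R^2$ admits $\dif x\,\dif s$ as its $(\mathcal{F}^\pi_t)$-compensator. Fix $k$ and write $\pi=\pi_k$, $\lambda_s=\lambda_{k,s}$. First I would record that the random field $H(s,x)=\mathbb{1}_{\{x\le\lambda_s\}}$ on $\R_+\times\R_+$ is nonnegative and $(\mathcal{F}^\pi_s)$-progressively measurable, since $(\lambda_s)$ is, and that $N_{k,t}=\int_{(0,t]\times\R_+}H(s,x)\,\pi(\dif s,\dif x)$ with $\int_0^\infty H(s,x)\,\dif x=\lambda_s$.

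The core step is the compensation (Campbell / exponential) formula: for every nonnegative $(\mathcal{F}^\pi_s)$-progressively measurable integrand $G$,
\[
\E{\int_{(0,t]\times\R_+}G(s,x)\,\pi(\dif s,\dif x)}=\E{\int_0^t\!\!\int_0^\infty G(s,x)\,\dif x\,\dif s}.
\]
I would establish this first for elementary integrands $G(s,x)=\mathbb{1}_B\,\mathbb{1}_{(a,b]}(s)\,\mathbb{1}_{(c,d]}(x)$ with $B\in\mathcal{F}^\pi_a$, where it reduces to the independence of the $\pi$-increment over $(a,b]\times(c,d]$ from $\mathcal{F}^\pi_a$ together with its mean $(b-a)(d-c)$, and then extend it to all nonnegative progressively measurable $G$ by a monotone-class argument and monotone convergence. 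Applied to $G=H$ this already yields $\E{N_{k,t}}=\E{\int_0^t\lambda_s\,\dif s}$.

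To obtain the conditional statement, fix $0<s<t$ and $B\in\mathcal{F}^\pi_s$ and apply the formula to $G(u,x)=\mathbb{1}_B\,\mathbb{1}_{(s,t]}(u)\,\mathbb{1}_{\{x\le\lambda_u\}}$, which is again nonnegative and progressively measurable; this gives $\E{\mathbb{1}_B\,N_k((s,t])}=\E{\mathbb{1}_B\int_s^t\lambda_u\,\dif u}$ for every $B\in\mathcal{F}^\pi_s$, that is $\E{N_k((s,t])\mid\mathcal{F}^\pi_s}=\E{\int_s^t\lambda_u\,\dif u\mid\mathcal{F}^\pi_s}$, which is precisely \eqref{equ:martingale}. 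Equivalently, $N_{k,t}-\int_0^t\lambda_{k,u}\,\dif u$ is an $(\mathcal{F}^\pi_t)$-local martingale, and to dispense with any integrability hypothesis on $\lambda$ one localises along $\tau_n=\inf\{t:N_{k,t}\ge n\}$, for which the stopped integrand $\mathbb{1}_{(0,\tau_n]}(u)\,H(u,x)$ remains admissible.

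The one point deserving care — and the only real obstacle — is that the stochastic-integration theory for Poisson random measures is classically phrased for \emph{predictable} integrands, whereas $(\lambda_s)$ is merely progressively measurable; this causes no trouble because the compensator $\dif x\,\dif s$ is non-atomic in $s$, so a progressively measurable integrand and its predictable projection have the same integral against $\pi-\dif x\,\dif s$. Alternatively, as the excerpt already indicates, one simply invokes the version of the statement proved at this level of generality by Brémaud \& Massoulié \cite{bremaud1996stability}. Beyond this measurability bookkeeping, the argument is routine.
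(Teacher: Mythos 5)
The paper offers no proof of Proposition~\ref{Proposition:thinning} at all --- it is quoted from Brémaud \& Massoulié --- so your direct verification is a reasonable thing to attempt, and its core (the compensation formula for $\pi_k$, proved on elementary integrands $\mathbb{1}_B\mathbb{1}_{(a,b]}(s)\mathbb{1}_{(c,d]}(x)$ with $B\in\mathcal{F}^\pi_a$ and then applied to $G(u,x)=\mathbb{1}_B\mathbb{1}_{(s,t]}(u)\mathbb{1}_{\{x\le\lambda_u\}}$) is the standard route and is correct whenever $\lambda$ is \emph{predictable}. The gap is precisely at the point you dismiss as measurability bookkeeping. First, the monotone-class extension starting from those elementary sets only yields the formula for integrands measurable with respect to the $\sigma$-field they generate, which is the predictable $\sigma$-field (tensored with $\mathcal{B}(\R_+)$ in $x$); it cannot reach all progressively measurable $G$. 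Second, the bridge you propose --- that a progressive integrand and a predictable version agreeing $\dif\Prob\otimes\dif s$-a.e.\ have the same integral against $\pi_k$ because the compensator $\dif x\,\dif s$ is non-atomic --- is false in general: non-atomicity gives equality of the $\dif x\,\dif s$ integrals, but the $\pi_k$-integrals may differ, because $\pi_k$ can charge the random time-sections on which the two integrands disagree when the integrand is allowed to ``see'' the atom at the current time.

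Concretely, set $\lambda_s=\int \mathbb{1}_{\{u=s,\,x\le 1\}}\,x\,\pi_k(\dif u,\dif x)$, i.e.\ $\lambda_s=x$ if $\pi_k$ has an atom at $(s,x)$ with $x\le 1$ and $\lambda_s=0$ otherwise. This process is $(\mathcal{F}^\pi_s)$-adapted and jointly measurable, hence progressively measurable, and $\lambda_s=0$ for Lebesgue-a.e.\ $s$, so $\int_0^t\lambda_s\,\dif s=0$ a.s.\ and any predictable modification vanishes. Yet the thinned process~\eqref{equ:thinning} built from this $\lambda$ counts every atom of $\pi_k$ in $(0,t]\times[0,1]$ (at such an atom one has $x=\lambda_s$, so the indicator equals one), i.e.\ it is a standard Poisson process with intensity $1\neq\lambda$. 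So both the compensation formula for progressive integrands and the proposition read literally at that level of generality fail; this is the marked analogue of the classical counterexample $H_s=\mathbb{1}_{\{\Delta N_s=1\}}$, for which $\mathbb{E}[\int_0^t H_s\,\dif N_s]=t$ while $\int_0^t H_s\,\dif s=0$. The fix costs nothing for the paper: the intensity of Definition~\ref{def:Hawkes_definition} is predictable precisely because the stochastic integral is taken over $[0,t)$, so your argument, carried out for predictable $\lambda$ (or the citation of Brémaud--Massoulié, as the paper does), establishes the version of~\eqref{equ:martingale} that is actually used; the localisation step is superfluous since both sides of the compensation identity are permitted to be infinite for nonnegative integrands.
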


The process $\boldsymbol{N}_t$ is then said to be \textit{embedded} into the Poisson base $\pi$.  The integral in~\eqref{equ:model_intensity} is taken over $[0,t)$ with $t$ specifically excluded from its domain. The system
\begin{equation*}
    \begin{cases}
        N^T_{k,t}
        &=
        \int_0^t \int_0^{\infty}
        \mathbb{1}_{ \{ x \leq \lambda_{k,s} \}}
        \pi_{k,s}( \dif x , \dif s)
        \\
        \lambda^T_{k,t} 
            &=
              \mu_k\big( \frac{t}{T} \big)
            +
            \sum_{l=1}^p 
            \int_{[0,t)}
                g\big( \frac{t}{T}\big) \varphi_{kl}(t-s)
            N_l(\dif s)
            ,
    \end{cases}
\end{equation*}

recursively defines $(\lambda_{k,t})$ at each $t \in [0,T]$ as a function of the past jumps of the Poisson base $(\pi_k)$. Proposition~\ref{Proposition:thinning} therefore ensures the well-posedness of the following definition.
\begin{definition}\label{def:Hawkes_definition}
    A locally stationary Hawkes process is any point process $\boldsymbol{N}_t$ of the form~\eqref{equ:thinning} with intensity
    \begin{equation*}
            \lambda_{k,t} 
            =
            \mu_k\big( \frac{t}{T} \big)
            +
            \sum_{l=1}^p 
            \int_{[0,t)}
                g\big( \frac{t}{T}\big) \varphi_{kl}(t-s)
            N_l(\dif s).
    \end{equation*}
\end{definition}
Definition~\ref{def:Hawkes_definition} transmits certain properties of the Poisson base to the process itself. Two distinct coordinates of $\boldsymbol{N}_t$ are in particular $\Prob$-a.s prevented from jumping simultaneously. Definition~\eqref{equ:martingale} of the intensity also implies the process
 \begin{equation}\label{equ:fundamental_martingale}
     \boldsymbol{M}_t
     =
     (M_{k,t})
     :=
     \Big( 
     N_{k,t}
     -
     \int_0^t
     \lambda_{k,s}
     \dif s
     \Big), \hspace{0.1cm} t \in [0,T],
 \end{equation}
 is a local martingale. In fact, for any predictable process $ ( H_{k,t})$ such that, $\mathbb{E}[ \int_0^t \lvert H_{k,s} \rvert N_k(\dif s)] < \infty$ for any $k = 1 \hdots p$, one has
 \begin{equation*}
     \mathbb{E}\Big[ 
        \int_0^t 
            H_{k,s} 
        N_l(\dif s)
     \Big\lvert
        \mathcal{F}_t
     \Big]
     =
     \mathbb{E}\Big[ 
        \int_0^t 
            H_{k,s} 
            \lambda_{k,s}
        \dif s
     \Big\lvert
        \mathcal{F}_t
     \Big],
 \end{equation*}
 and we refer the reader to Daley \& Vere-Jones~\cite{DVJ} and Brémaud~\cite{Bremaudbook} for a comprehensive introduction to point processes with stochastic intensities. Taking $H_{k,\cdot}$ to be the indicator function $\mathbb{1}_{[t,t+\dif t ] }$ of some interval $[t,t+\dif t]$, this implies~\eqref{equ:informal_intensity}.

  \subsection{Stability of the process.} For any $d \in \mathbb{N}^{\star}$ and any $x \in \R^d$, we denote by $\lVert \cdot \rVert_q$ the $\ell^q$-norm  $\lVert x \rVert_q=( \sum_i^d \lvert x_i \rvert^q)^{\nicefrac{1}{q}}$. For any $f \colon (a,b) \mapsto \R^d$, where $a, b \in \R \cup \{- \infty,\infty \}$, we write $\lVert f \rVert_{L^{\infty}[a,b]}$ for  $\sup_{x \in (a,b)} \lVert f(x)  \rVert_1$, and omit $(a,b)$ from the notation where there is no ambiguity. Likewise, we use the notation $\lVert f \rVert_{L^q} = (\int_{(a,b)} \lVert f(x) \rVert_q^q \dif x)^{\nicefrac{1}{q}}$ for the $L^q(a,b)$-norm. Finally, $\mathcal{M}_d(\R)$ is the set of $d \times d$ matrices with real coefficients.  
  
  \begin{assumption}\label{ass:g_and_mu_are_C0}
      The baseline and reproduction functions  $\mu \colon  [0,1] \mapsto [0,\infty)^p$, $g \colon [0,1] \mapsto [0,\infty)$ are respectively continuous and continuously differentiable over $[0,1]$.
  \end{assumption}

  \begin{assumption}[Stability]\label{ass:stability}
      For any $k,l=  1 \hdots p$, $\int_0^{\infty}  \varphi_{kl}(t) \dif t < \infty$, and the matrix with coefficients  
      \begin{equation*}
           \lVert g \rVert_{L^{\infty}} \Big( 
           \int_0^{\infty}  
            \varphi_{kl}(t)  
        \dif t \Big) 
      \end{equation*}
      
      has spectral radius below $1$.
  \end{assumption}

    \begin{remark}
        When $\mu$ and $g$ are constants, Assumption~\ref{ass:stability} reduces to the classical stability condition in Bacry  \textit{et al.}~\cite{bacrylimit} or Brémaud \& Massoulié~\cite{bremaud1996stability}.  Note also that while we have not used the cluster representation of Roueff \textit{et al.}~\cite{ROUEFF1}, Assumption~\ref{ass:stability} is comparable to~\cite[Theorem 1]{ROUEFF1}.
  \end{remark}
  
  For any $k,l=1 \hdots p$, define the integrable function
  \begin{equation}\label{equ:bound_function}
      \Bar{\varphi}_{kl} \colon t \in [0,\infty)
      \mapsto 
     \lVert g \rVert_{L^{\infty}}  \varphi_{kl}(t),
  \end{equation}

  and, accordingly, $\bar{\varphi}=(\bar{\varphi}_{kl}) \colon [0,\infty) \mapsto  \mathcal{M}_p(\R)$. Assumption~\ref{ass:stability} guarantees the stability of the process in the sense that $N^T_k(A)< \infty$ for any bounded $A \in \mathcal{B}(\R)$ and any $k = 1 \hdots p$. The random measures $N^T_k$ are then said to be locally finite.  This is clear from the fact the jumps $(t^k_i)$ of $(\boldsymbol{N}^T_t)$ are comprised in the jumps of a standard Hawkes process $(\bar{\boldsymbol{N}}_t)$ with baseline $(\sup_k \mu_k(x))$ and kernel $\bar{\varphi}$  embedded in the same Poisson base $(\pi_k)$ . For any $k=1 \hdots p$, the process $(N^T_{k,t})$ is then upper-bounded by $(\bar{N}_{k,t})$, which admits a finite first moment at any $t>0$, see for instance~\cite[Lemma 2]{bacrylimit} or Delattre \textit{et al.}~\cite[Lemmata 23 \& 24]{DelattreHawkesLargeNetworks}. In particular, one has Lemma~\ref{lemma:process_if_well_defined}.
  \begin{Lemma}\label{lemma:process_if_well_defined}
      Suppose Assumptions~\ref{ass:g_and_mu_are_C0} and~\ref{ass:stability} hold. Then, the counting measures $N^T_k$, $k=1\hdots p$ are all locally finite, and, furthermore, $\mathbb{E}[N^T_{k,t}] < \infty$ for any $T>0$ and any $t \in [0,T]$.
  \end{Lemma}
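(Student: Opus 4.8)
The plan is to dominate $(\boldsymbol N^T_t)$ pathwise by a time-homogeneous linear Hawkes process built on the same Poisson base, and then to invoke the classical first-moment bound for the latter. First I would record that, by Assumption~\ref{ass:g_and_mu_are_C0}, $\mu$ is continuous on the compact $[0,1]$ and $g$ is continuous on $[0,1]$, so that $\bar\mu := \sup_{x \in [0,1]} \max_{k} \mu_k(x) < \infty$ and $\lVert g \rVert_{L^\infty} < \infty$; in particular the functions $\bar\varphi_{kl}$ of~\eqref{equ:bound_function} are integrable and, by Assumption~\ref{ass:stability}, the matrix with entries $\int_0^\infty \bar\varphi_{kl}(t)\dif t$ has spectral radius strictly below $1$. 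I would then introduce, on the same space $(\Omega, \mathcal{F}, \Prob)$ and embedded in the very same Poisson measures $(\pi_k)$, the standard linear Hawkes process $(\bar{\boldsymbol N}_t)$ with constant baseline $\bar\mu$ in every coordinate and kernel $\bar\varphi$; its existence, local finiteness and finite first moments $\mathbb{E}[\bar N_{k,t}] < \infty$ for every $t > 0$ are guaranteed under these hypotheses by Br\'emaud \& Massouli\'e~\cite{bremaud1996stability} together with~\cite[Lemma 2]{bacrylimit} (or~\cite[Lemmata 23 \& 24]{DelattreHawkesLargeNetworks}).

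Second, I would establish the pathwise comparison $N^T_{k,t} \le \bar N_{k,t}$ for every $t \ge 0$ and $k = 1 \hdots p$, $\Prob$-a.s. Let $0 = \tau_0 < \tau_1 < \tau_2 < \cdots$ be the ordered sequence of jump times of $(\bar{\boldsymbol N}_t)$, which tends to $+\infty$ a.s.\ by local finiteness. One argues by induction on $n$ that $(\boldsymbol N^T_t)$ has no jump on $(\tau_n, \tau_{n+1})$, so that $N^T_{k,t} \le \bar N_{k,\tau_n} \le \bar N_{k,t}$ there. Indeed, granting the inductive hypothesis $N^T_l(\dif s) \le \bar N_l(\dif s)$ on $[0, \tau_{n+1})$ as measures, for $t \in [\tau_n, \tau_{n+1})$ and each $k$ one has $\lambda^T_{k,t} = \mu_k(\tfrac{t}{T}) + \sum_l \int_{[0,t)} g(\tfrac{t}{T}) \varphi_{kl}(t-s) N_l(\dif s) \le \bar\mu + \sum_l \int_{[0,t)} \bar\varphi_{kl}(t-s)\, \bar N_l(\dif s) = \bar\lambda_{k,t}$, using $g, \varphi_{kl} \ge 0$; hence any atom $(x,t)$ of $\pi_k$ with $x \le \lambda^T_{k,t}$ also satisfies $x \le \bar\lambda_{k,t}$, which would force a jump of $\bar N_k$ at $t$ --- impossible for $t < \tau_{n+1}$. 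Thus $\boldsymbol N^T$ does not jump on $(\tau_n, \tau_{n+1})$, the count is preserved across $\tau_{n+1}$, and the induction closes, the base case $n=0$ being the same computation on $[0,\tau_1)$ with empty past.

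Finally, I would conclude: any bounded $A \in \mathcal{B}(\R)$ lies in some $[0,t]$, so $N^T_k(A) \le N^T_{k,t} \le \bar N_{k,t} < \infty$ a.s., which is local finiteness, and $\mathbb{E}[N^T_{k,t}] \le \mathbb{E}[\bar N_{k,t}] < \infty$ for every $T > 0$ and $t \in [0,T]$. The delicate point is the domination step, and within it the mild circularity that $\lambda^T_{k,t}$ is only meaningful once $N^T$ is known to be locally finite; this is resolved precisely by running the recursion of the system preceding Definition~\ref{def:Hawkes_definition} along the locally finite skeleton $(\tau_n)$, which ensures at each stage that only finitely many past jumps enter the Volterra integral defining $\lambda^T_{k,t}$.
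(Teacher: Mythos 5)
Your proof is correct and follows essentially the same route as the paper, which also dominates $(\boldsymbol{N}^T_t)$ pathwise by a standard Hawkes process with constant baseline $\sup_x \mu_k(x)$ and kernel $\bar{\varphi}$ embedded in the same Poisson base, and then invokes the first-moment bound of Bacry \textit{et al.}~\cite[Lemma 2]{bacrylimit} or Delattre \textit{et al.}~\cite[Lemmata 23 \& 24]{DelattreHawkesLargeNetworks}. Your induction along the jump times of the dominating process merely makes explicit the comparison the paper states in one line, so the two arguments coincide in substance.
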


  With all existence guarantees provided we proceed to our main results.

\section{Law of large number and central limit theorem.}\label{section:limit_theorems}

Let $(N^T_t)_{t \in [0,T]}$ be a collection indexed on $T$ of locally stationary Hawkes processes with reproduction function $g$, kernel $\varphi$ and baseline $\mu$. Define the matrix function
\begin{equation*}
    \boldsymbol{K} \colon x \in [0,1] \mapsto
    g(x)\Big(  \int_0^{\infty} \varphi(s) \dif s \Big) \in \mathcal{M}_p(\R^+).
\end{equation*}

When $g$ and $\mu$ satisfy Assumption~\ref{ass:stability}, the matrix $\boldsymbol{Id}-\boldsymbol{K}(x)$ is invertible for any $x \in [0,1]$, and one has following functional law of large numbers.

\begin{theorem}[Law of Large Numbers]\label{Th:LLN}
    Suppose Assumptions~\ref{ass:g_and_mu_are_C0} to~\ref{ass:stability} hold. Then,
    \begin{equation*}
    \sup_{u \in [0,1]}
        \Big\lVert
            \frac{1}{T} N_{Tu}^T
            -
            \int_0^u
                ( \boldsymbol{Id} - \boldsymbol{K}(x))^{-1} \mu(x)
            \dif x
        \Big\rVert
        \to
        0
        \end{equation*}
        in $L^2(\Prob)$ and $\Prob$-a.s.
\end{theorem}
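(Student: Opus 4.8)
The core of the argument is the deterministic Proposition~\ref{prop:deterministic_convergence}, so the plan is to first decompose $T^{-1}N^T_{Tu}$ into a deterministic "mean" part and a martingale fluctuation part, handle the latter quickly, and reduce the former to the deterministic statement. Start from the fundamental martingale~\eqref{equ:fundamental_martingale}: write
\begin{equation*}
    \frac{1}{T} N^T_{k,Tu}
    =
    \frac{1}{T}\int_0^{Tu} \lambda^T_{k,s}\dif s
    +
    \frac{1}{T} M^T_{k,Tu}.
\end{equation*}
The second term goes to zero uniformly in $u$, in $L^2$ and almost surely: the predictable quadratic variation of $M^T_{k,\cdot}$ is $\int_0^{\cdot}\lambda^T_{k,s}\dif s$, which by the domination by the stationary Hawkes process $\bar{\boldsymbol N}$ (the paragraph preceding Lemma~\ref{lemma:process_if_well_defined}) and Lemma~\ref{lemma:process_if_well_defined} is $O(T)$; so Doob's $L^2$ inequality gives $\mathbb{E}[\sup_{u\le 1}\|T^{-1}M^T_{Tu}\|^2]=O(1/T)$, and a Borel--Cantelli argument along a subsequence combined with monotonicity of $N^T$ upgrades this to almost-sure uniform convergence in the usual way.

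**Reducing the mean part to the deterministic Volterra analysis.**

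Taking expectations in the intensity equation, the vector $m^T(t):=\mathbb{E}[\boldsymbol N^T_t]$ and its density $n^T(t):=\mathbb{E}[\lambda^T_t]$ (which exists and is locally bounded, again by domination) solve the non-convolutive linear Volterra equation
\begin{equation*}
    n^T_k(t)
    =
    \mu_k\big(\tfrac{t}{T}\big)
    +
    \sum_{l=1}^p \int_0^t g\big(\tfrac{t}{T}\big)\varphi_{kl}(t-s)\, n^T_l(s)\dif s.
\end{equation*}
Rescaling time via $t=Tx$, this becomes a Volterra equation on $[0,1]$ whose kernel $g(x)\varphi(T(x-y))$ concentrates near the diagonal as $T\to\infty$ and, after the change of variables, looks locally like the stationary kernel with "frozen" coefficient $g(x)$. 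The object $\int_0^u(\boldsymbol{Id}-\boldsymbol K(x))^{-1}\mu(x)\dif x$ is exactly $\int_0^u n^\infty(x)\dif x$ where $n^\infty(x)=(\boldsymbol{Id}-\boldsymbol K(x))^{-1}\mu(x)$ is the pointwise stationary fixed point. The claim $\sup_{u\le 1}\|T^{-1}m^T(Tu)-\int_0^u n^\infty(x)\dif x\|\to 0$ is precisely (or is an immediate consequence of) Proposition~\ref{prop:deterministic_convergence}: feed the resolvent of the rescaled Volterra kernel into the KCD grid-refinement scheme, partition $[0,1]$ into blocks of length $\delta$ with $T\delta\to\infty$ but $\delta\to 0$, on each block compare the true resolvent to the constant-coefficient (stationary) resolvent with parameter $g$ evaluated at the left endpoint, control the block-to-block error by the decay of $\int_t^\infty \|\varphi\|$, and sum. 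I would simply invoke Proposition~\ref{prop:deterministic_convergence} here rather than reprove it.

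**Assembling the conclusion and the main obstacle.**

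Combining the two parts, $T^{-1}N^T_{Tu}-\int_0^u n^\infty(x)\dif x = (T^{-1}m^T(Tu)-\int_0^u n^\infty)+T^{-1}M^T_{Tu}$, and taking $\sup_{u\le1}$ of the norm, both pieces vanish — the first deterministically by Proposition~\ref{prop:deterministic_convergence}, the second in $L^2$ and a.s. by the martingale estimate — which gives the theorem in both modes of convergence. Two points deserve care. First, one must check the invertibility of $\boldsymbol{Id}-\boldsymbol K(x)$ for every $x\in[0,1]$ and the boundedness of $x\mapsto(\boldsymbol{Id}-\boldsymbol K(x))^{-1}$: this follows because $\boldsymbol K(x)$ has nonnegative entries and spectral radius $\le$ that of $\bar\varphi$'s integral matrix, which is $<1$ by Assumption~\ref{ass:stability}, so the Neumann series converges uniformly in $x$. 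Second, the interchange of expectation and the Volterra integral, and the finiteness/local-boundedness of $n^T$, must be justified via the domination by $\bar{\boldsymbol N}$ and Fubini. The genuine mathematical difficulty is entirely inside Proposition~\ref{prop:deterministic_convergence} — reconciling the $o(T)$-refining grid with the non-convolutive structure — but since that is stated separately, the proof of Theorem~\ref{Th:LLN} itself is a relatively short martingale-plus-deterministic-input argument; the only real bookkeeping is making the uniform-in-$u$ almost-sure upgrade of the martingale term rigorous.
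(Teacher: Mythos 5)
There is a genuine gap, and it sits exactly where the paper has to work hardest. Your decomposition $\frac1T N^T_{Tu}=\frac1T\int_0^{Tu}\lambda^T_s\dif s+\frac1T M^T_{Tu}$ is fine, but in the assembling step you write
\begin{equation*}
\frac1T N^T_{Tu}-\int_0^u n^\infty(x)\dif x=\Big(\frac1T m^T(Tu)-\int_0^u n^\infty(x)\dif x\Big)+\frac1T M^T_{Tu},
\end{equation*}
which silently replaces the random compensator $\int_0^{Tu}\lambda^T_s\dif s$ by its expectation $m^T(Tu)=\mathbb{E}[N^T_{Tu}]$. The correct identity carries the extra term $\frac1T\int_0^{Tu}\big(\lambda^T_s-\mathbb{E}[\lambda^T_s]\big)\dif s$, i.e.\ the fluctuation of the intensity around its mean, and your argument never controls it: the Doob estimate only covers $M^T$, and Proposition~\ref{prop:deterministic_convergence} only covers the deterministic mean. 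This is precisely the term $\frac1T X^T_{Tu}$ in the paper, which via the trend-plus-noise decomposition (Lemma~\ref{Lemma:Jaisson_like}) equals $\frac1T\int_0^{Tu}\big(\int_s^{Tu}K^T(t,s)\dif t\big)\dif M^T_s$. It is \emph{not} a martingale in $u$ (the integrand depends on $u$), so one cannot get the uniform-in-$u$ or almost-sure statement by another Doob/Borel--Cantelli pass; the paper handles it in Lemma~\ref{Lemma:Volterra_part_to_0} by integrating by parts along the diagonal direction, using the bound on $(\partial_s+\partial_t)K^T$ from Lemma~\ref{Lemma:IPP_base} (this is where the $C^1$ regularity of $g$ enters), to dominate $\sup_u\lVert X^T_{Tu}\rVert$ by $\big(\int_0^\infty\lVert\bar\Psi+\tilde\Psi\rVert\big)\sup_{s\le T}\lVert M^T_s\rVert$ and then reuse the martingale estimate.

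The rest of your plan matches the paper: the martingale term is treated by Doob exactly as you say (Lemma~\ref{Lemma:martingale_to_0}), and the mean part is indeed reduced to Proposition~\ref{prop:deterministic_convergence} through the formula $\mathbb{E}[\lambda^T_t]=\mu(\frac{t}{T})+\int_0^tK^T(t,s)\mu(\frac{s}{T})\dif s$ (Corollary~\ref{coro:mean_intensity}), with invertibility of $\boldsymbol{Id}-\boldsymbol K(x)$ as you describe. But as it stands your proof establishes the law of large numbers for $\frac1T\mathbb{E}[N^T_{Tu}]+\frac1T M^T_{Tu}$, not for $\frac1T N^T_{Tu}$; to close the argument you must add a lemma controlling $\sup_{u\in[0,1]}\frac1T\lVert\int_0^{Tu}(\lambda^T_s-\mathbb{E}[\lambda^T_s])\dif s\rVert$, which is the substantive stochastic-Volterra part of the paper's proof.
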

As we move to the weak convergence of the rescaled process, Assumption~\ref{ass:stability} needs be slightly strengthened into Assumption~\ref{ass:disease}.

\begin{assumption}\label{ass:disease}
    There exists $\epsilon \in (0,\infty]$ such that $\varphi \in L^{1+\epsilon}[0,\infty)$
\end{assumption}
\begin{theorem}[Central limit theorem]\label{theorem:FTCL}
    Under Assumption~\ref{ass:g_and_mu_are_C0} to~\ref{ass:stability},
    \begin{equation*}
        \frac{1}{\sqrt{T}}
        \big\{ 
        N_{Tu}^T
        -
        \mathbb{E}[N_{Tu}^T]
        \big\}
        \xrightarrow[T \to \infty]{\mathcal{L}(\Prob)}
        \int_0^u (\boldsymbol{Id}-\boldsymbol{K}(s) )^{-1}
        \boldsymbol{\Sigma}^{\frac{1}{2}}(s) \dif W_s
    \end{equation*}
   in the sense of finite-dimensional distributions, where $(W_s)_{s \in [0,1]}$ is a standard Brownian motion and
    \begin{equation*}
    \boldsymbol{\Sigma}
    \colon x
    \mapsto 
    \textup{Diag}\big((
         \boldsymbol{Id}-\boldsymbol{K}(x) )^{-1} \mu(x)  \big).
    \end{equation*}
    If furthermore Assumption~\ref{ass:disease} holds, the convergence occurs in Skorokhod topology.
\end{theorem}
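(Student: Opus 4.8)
The plan is to decompose the centred counting process into a martingale term plus a remainder governed by the Volterra resolvent, and to treat each by combining the martingale CLT of Bacry \textit{et al.}~\cite{bacrylimit} with the grid-refinement approximation of Kwan, Chen \& Dunsmuir~\cite{Kwan2}. First I would write, for each $k$, the representation $N^T_{k,t} = M^T_{k,t} + \int_0^t \lambda^T_{k,s}\dif s$ with $\boldsymbol{M}^T$ the fundamental martingale of~\eqref{equ:fundamental_martingale}, and then iterate the intensity equation to obtain $N^T_{Tu} - \mathbb{E}[N^T_{Tu}] = \int_0^{Tu} \Psi^T(Tu,s)\,\dif \boldsymbol{M}^T_s$ for a suitable (non-convolutive) Volterra kernel $\Psi^T$ built from $g(\cdot/T)$ and $\varphi$; this is the analogue in our setting of the resolvent identity $(\boldsymbol{Id}-\Phi)^{-1}$ that appears in the stationary case. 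The deterministic Proposition~\ref{prop:deterministic_convergence} should give that the rescaled kernel $\Psi^T(Tu,Ts)$ converges, uniformly in an appropriate sense, to the pointwise resolvent $(\boldsymbol{Id}-\boldsymbol{K}(x))^{-1}$ evaluated at the macroscopic time; this is exactly where the $o(T)$ grid of~\cite{Kwan2} enters, since on each grid cell $g(\cdot/T)$ is almost constant and the kernel is locally close to a genuine convolution for which the classical resolvent computation applies.

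Next I would establish the finite-dimensional CLT. Fix times $u_1 < \dots < u_m$ and a linear combination; by the martingale central limit theorem (Rebolledo / Jacod--Shiryaev, as used in~\cite{bacrylimit} Theorem~3), it suffices to check two things: (i) the predictable quadratic variation of the stochastic integral $T^{-1/2}\int_0^{Tu}\Psi^T(Tu,s)\,\dif\boldsymbol{M}^T_s$ converges in probability to the deterministic limit $\int_0^u (\boldsymbol{Id}-\boldsymbol{K}(x))^{-1}\boldsymbol{\Sigma}(x)(\boldsymbol{Id}-\boldsymbol{K}(x))^{-\trans}\dif x$, and (ii) a Lindeberg/Lyapunov negligibility-of-jumps condition, which is immediate because $\boldsymbol{N}^T$ has unit jumps and is dominated by the standard Hawkes process $\bar{\boldsymbol{N}}$ with finite moments (Lemma~\ref{lemma:process_if_well_defined} and the domination argument preceding it). For (i) the bracket of $M^T_{k,\cdot}$ is $\int_0^{\cdot}\lambda^T_{k,s}\dif s$; after rescaling, $T^{-1}\int_0^{Tu}\lambda^T_{k,s}\dif s$ converges by Theorem~\ref{Th:LLN} to $\int_0^u[(\boldsymbol{Id}-\boldsymbol{K}(x))^{-1}\mu(x)]_k\dif x$, which is precisely the $k$-th diagonal entry of $\boldsymbol{\Sigma}$; plugging the kernel convergence from Proposition~\ref{prop:deterministic_convergence} into the quadratic-variation integral and using dominated convergence (with the $\bar\varphi$-domination for integrability) yields the claimed limiting covariance. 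Computing that $(\boldsymbol{Id}-\boldsymbol{K})^{-1}\boldsymbol{\Sigma}(\boldsymbol{Id}-\boldsymbol{K})^{-\trans}$ is the covariance matching $\boldsymbol{\Sigma}^{1/2}$ in~\eqref{equ:FTCL_intro} is a routine identification once one notes $\boldsymbol{\Sigma}$ is diagonal and $\boldsymbol{\Sigma}^{1/2}$ denotes its entrywise square root.

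For the upgrade to functional (Skorokhod) convergence under Assumption~\ref{ass:disease}, I would prove tightness of $T^{-1/2}\{N^T_{Tu}-\mathbb{E}[N^T_{Tu}]\}$ in $D([0,1],\R^p)$ via the Aldous or the moment criterion $\mathbb{E}\lVert X^T_u - X^T_v\rVert^2 \le C|u-v|$ on increments. The increment over $[v,u]$ splits into the "fresh" martingale mass $T^{-1/2}(\boldsymbol{M}^T_{Tu}-\boldsymbol{M}^T_{Tv})$, whose second moment is $T^{-1}\mathbb{E}\int_{Tv}^{Tu}\lambda^T_{k,s}\dif s = O(u-v)$ uniformly by the $\bar{\boldsymbol{N}}$-domination, plus the change in the Volterra-weighted past, $T^{-1/2}\int_0^{Tv}(\Psi^T(Tu,s)-\Psi^T(Tv,s))\,\dif\boldsymbol{M}^T_s$; here the $L^{1+\epsilon}$ regularity of $\varphi$ is what controls the modulus of continuity of $s\mapsto\Psi^T(Ts,\cdot)$ — essentially because $\int_{Tv}^{Tu}\varphi$ and the $C^1$ regularity of $g$ make the kernel difference small in the relevant norm, replicating the role of Assumption~\ref{ass:disease} in the Bacry \textit{et al.} tightness argument. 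The main obstacle, and the place the proof genuinely departs from~\cite{bacrylimit}, is controlling this non-convolutive kernel difference: one cannot use Fourier/convolution identities, so the estimate must be done directly on the iterated Volterra series, using Proposition~\ref{prop:deterministic_convergence} for the leading behaviour and a Grönwall-type bound on the tail of the iteration — the observation that the Bacry sampling grid and the \textsc{kcd} approximation grid refine at the same $o(T)$ rate is exactly what makes these two error budgets compatible and lets the recombination close.
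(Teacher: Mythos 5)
Your skeleton matches the paper's: the resolvent decomposition $N^T_{Tu}-\mathbb{E}[N^T_{Tu}]=\int_0^{Tu}\bigl(\boldsymbol{Id}+\int_s^{Tu}K^T(t,s)\dif t\bigr)\dif M^T_s$, a grid refining at rate $o(T)$ to localise the non-convolutive resolvent, a martingale CLT, and a separate tightness step in which Assumption~\ref{ass:disease} enters through H\"older. The first genuine gap is in your step (i). Applying the martingale CLT directly to the $u$-dependent stochastic integral forces you to prove convergence in probability of the predictable bracket $\frac1T\int_0^{Tu}H^T(s)\,\textup{Diag}(\lambda^T_s)\,H^T(s)^{\trans}\dif s$ with $H^T(s)=\boldsymbol{Id}+\int_s^{Tu}K^T(t,s)\dif t$, and you justify this by citing Proposition~\ref{prop:deterministic_convergence}. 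That proposition is a Ces\`aro statement: the kernel integral is averaged in $s$ against a continuous test function. It gives neither the uniform-in-$s$ convergence of $\int_s^{Tu}K^T(t,s)\dif t$ towards $\Gamma(\nicefrac{s}{T})=\int_0^{\infty}\Psi(\nicefrac{s}{T},r)\dif r$ that a quadratic functional weighted by the \emph{random} intensity requires, nor the weighted law of large numbers $\frac1T\int_0^{Tu}f(\frac sT)\lambda^T_s\dif s\to\int_0^u f(x)(\boldsymbol{Id}-\boldsymbol{K}(x))^{-1}\mu(x)\dif x$ (Theorem~\ref{Th:LLN} is only the case $f\equiv1$). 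Both estimates are plausible and can be extracted from Lemma~\ref{lemma:deterministic_approximation} together with a tail cutoff, but they must be proved; as written the step does not follow from the cited result. The paper circumvents this entirely: Lemmata~\ref{Lemma:firts_stochastic_approximation}--\ref{Lemma:next_to_last_stochastic_approximation} (the discrete-martingale argument recombining the grid with the device of Bacry \textit{et al.}~\cite{bacrylimit}, Theorem~3) replace $T^{-\nicefrac12}X^T_{Tu}$ by $\int_0^u\Gamma(x)T^{-\nicefrac12}\dif M^T_{Tx}$ in probability, after which the martingale CLT is applied only to $\tilde M^T_u=T^{-\nicefrac12}M^T_{Tu}$ (whose bracket $\frac1T\textup{Diag}(N^T_{Tu})\to\boldsymbol{\Sigma}(u)$ is exactly Theorem~\ref{Th:LLN}) and Lemma~\ref{Lemma:skoro_conv} concludes by continuous mapping.

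The second gap is the tightness step. The bound $\mathbb{E}\lVert X^T_u-X^T_v\rVert^2\le C\lvert u-v\rvert$ is not a sufficient tightness criterion: the moment criteria require an exponent strictly larger than one in $\lvert u-v\rvert$ (the exponent-one bound is satisfied, e.g., by a single unit jump at a uniform time, which is not C-tight), and Aldous' criterion does not apply directly to the Volterra term, which is not a martingale in $u$; moreover the clean $C\lvert u-v\rvert$ bound for that term is itself doubtful under the standing integrability assumptions. The paper's Proposition~\ref{prop:tight} instead controls the increments by hand: it integrates by parts along the direction $(1,1)$ using Lemma~\ref{Lemma:IPP_base} (which bounds $(\partial_s+\partial_t)K^T$ by $\frac1T\tilde{\Psi}$ with $\tilde{\Psi}\in L^1$), reduces everything to increments and suprema of the rescaled martingale, and uses the uniform equicontinuity of $t\mapsto\int_0^t K^T(t,s)\dif s$ proved in Lemma~\ref{lemma:required_for_tightness} -- the only place where Assumption~\ref{ass:disease} intervenes, via H\"older's inequality to bound $\int_u^{u+x}\varphi(s)\dif s$ by $x^{\nicefrac1\nu}\lVert\varphi\rVert_{L^{1+\epsilon}}$. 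You correctly identify the role of the $L^{1+\epsilon}$ hypothesis and the need for a direct estimate on the non-convolutive kernel increments, but the criterion you invoke would not close the argument, and this is where the bulk of the remaining work lies.
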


Under some mild additional conditions on $m$ and $\varphi$, Theorem~\ref{theorem:FTCL} enjoys an explicit extension.

\begin{assumption}\label{ass:Lipschitz}
    The baseline function $\mu$ is Lipschitz-continuous over $[0,1]$.
\end{assumption}

\begin{assumption}\label{ass:sqrt_integ}
    For any $k,l= 1 \hdots p$, $\int_0^t \varphi_{kl}(s) \sqrt{s}\dif s< \infty$.
\end{assumption}

\begin{corollary}\label{coro:sqrt_coro}
Under Assumption~\ref{ass:g_and_mu_are_C0} to~\ref{ass:sqrt_integ},
     \begin{equation*}
        \frac{1}{\sqrt{T}}
        \big\{ 
        N_{Tu}^T
        -
        \int_0^u
                ( \boldsymbol{Id} - \boldsymbol{K}(x))^{-1} \mu(x)
            \dif x
        \big\}
        \xrightarrow[T \to \infty]{\mathcal{L}(\Prob)}
        \int_0^u (\boldsymbol{Id}-\boldsymbol{K}(s) )^{-1}
        \boldsymbol{\Sigma}^{\frac{1}{2}}(s) \dif W_s
    \end{equation*}
    in Skorokhod topology.
\end{corollary}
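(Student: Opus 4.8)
The plan is to reduce the Corollary to Theorem~\ref{theorem:FTCL} by controlling the difference between the deterministic centering terms appearing in the two statements. Writing $F(u) = \int_0^u (\boldsymbol{Id}-\boldsymbol{K}(x))^{-1}\mu(x)\dif x$ for the limiting mean-rate integral, it suffices to show
\begin{equation*}
    \sup_{u \in [0,1]}
    \Big\lVert
        \frac{1}{\sqrt{T}}\mathbb{E}[N_{Tu}^T] - \sqrt{T}\, F(u)
    \Big\rVert
    \xrightarrow[T \to \infty]{} 0,
\end{equation*}
since then the processes $\tfrac{1}{\sqrt T}\{N_{Tu}^T - \mathbb{E}[N_{Tu}^T]\}$ and $\tfrac{1}{\sqrt T}\{N_{Tu}^T - \sqrt T F(u)\}$ differ by a deterministic function tending uniformly to zero, so they share the same finite-dimensional limits and the same Skorokhod limit (the latter using that convergence in Skorokhod topology is stable under uniform perturbations by continuous functions, and $F$ is continuous). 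Note Assumption~\ref{ass:disease} is in force here since Assumption~\ref{ass:sqrt_integ} implies $\varphi \in L^{1+\epsilon}$ for a suitable $\epsilon>0$ — indeed $\int_0^\infty \varphi_{kl}(s)\sqrt{s}\,\dif s<\infty$ together with $\varphi_{kl}\in L^1$ gives integrability of $\varphi_{kl}^{1+\epsilon}$ for small $\epsilon$ by splitting at $s=1$ and a Hölder argument — so the Skorokhod conclusion of Theorem~\ref{theorem:FTCL} is available.

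The core estimate is on $m_T(u) := \mathbb{E}[N_{Tu}^T]$. Taking expectations in the martingale decomposition~\eqref{equ:fundamental_martingale} and in Definition~\ref{def:Hawkes_definition}, the vector $\rho_T(t) := \frac{\dif}{\dif t}\mathbb{E}[N_t^T]$ (the mean intensity) solves the renewal-type equation
\begin{equation*}
    \rho_T(t) = \mu\big(\tfrac{t}{T}\big) + g\big(\tfrac{t}{T}\big)\int_0^t \varphi(t-s)\,\rho_T(s)\,\dif s,
    \qquad t \in [0,T].
\end{equation*}
On the other hand the density of $\sqrt T\, F$ after the time change $t = Tu$ is, up to the rescaling, $\bar\rho(x) := (\boldsymbol{Id}-\boldsymbol{K}(x))^{-1}\mu(x)$, which is the unique solution of the \emph{frozen} equation $\bar\rho(x) = \mu(x) + g(x)\big(\int_0^\infty\varphi\big)\bar\rho(x)$. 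Setting $\Delta_T(t) := \rho_T(t) - \bar\rho(t/T)$, one gets
\begin{equation*}
    \Delta_T(t) = g\big(\tfrac tT\big)\int_0^t \varphi(t-s)\Delta_T(s)\,\dif s
    + R_T(t),
\end{equation*}
where the remainder $R_T(t)$ collects (i) the tail term $g(\tfrac tT)\big(\int_t^\infty\varphi\big)\bar\rho(t/T)$ and (ii) the ``frozen-coefficient'' error $g(\tfrac tT)\int_0^t\varphi(t-s)\big(\bar\rho(s/T)-\bar\rho(t/T)\big)\dif s$. The goal is the bound $\sup_{t\le T}\int_0^T \lVert \Delta_T(s)\rVert\,\dif s = o(\sqrt T)$, equivalently $\sup_u\lVert m_T(Tu) - \sqrt T F(u)\rVert \cdot T^{-1/2} \to 0$ after noting $m_T(Tu) = \int_0^{Tu}\rho_T$ and $\sqrt T F(u) = T^{-1/2}\int_0^{Tu}\bar\rho(s/T)\dif s$.

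Term (i) is controlled using Assumption~\ref{ass:sqrt_integ}: $\int_0^T \big(\int_t^\infty \lVert\varphi(r)\rVert\dif r\big)\dif t \le \int_0^\infty \min(t,T)\lVert\varphi(t)\rVert\,\dif t \le \sqrt{T}\int_0^\infty \sqrt{t}\,\lVert\varphi(t)\rVert\,\dif t = O(\sqrt T)$, and since $\bar\rho$ is bounded (Assumptions~\ref{ass:g_and_mu_are_C0},~\ref{ass:stability}) this contributes $O(\sqrt T)$; a slightly sharper splitting at a growing threshold upgrades this to $o(\sqrt T)$. Term (ii) uses Assumption~\ref{ass:Lipschitz} (whence $\bar\rho$ is Lipschitz, as $x\mapsto(\boldsymbol{Id}-\boldsymbol{K}(x))^{-1}$ is $C^1$ by Assumption~\ref{ass:g_and_mu_are_C0} applied to $g$, and $\mu$ is Lipschitz): $\lVert\bar\rho(s/T)-\bar\rho(t/T)\rVert \le C\,(t-s)/T$, so the $L^1$-in-$t$ norm of (ii) is $\le (C/T)\int_0^T\int_0^t (t-s)\lVert\varphi(t-s)\rVert\,\dif s\,\dif t \le (C/T)\cdot T\int_0^\infty r\lVert\varphi(r)\rVert\dif r$, which is $O(1) = o(\sqrt T)$ once one knows $\int_0^\infty r\lVert\varphi(r)\rVert\dif r<\infty$ — and this finite-first-moment fact again follows from Assumption~\ref{ass:sqrt_integ} combined with $\varphi\in L^1$ by splitting at $r=1$. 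Finally one closes the loop: the map $h \mapsto g(\tfrac tT)\int_0^t\varphi(t-s)h(s)\dif s$ acting on $L^\infty([0,T])$-valued paths has, by Assumption~\ref{ass:stability} and a Gronwall/resolvent argument exactly as in the proof of Lemma~\ref{lemma:process_if_well_defined}, a resolvent with uniformly (in $T$) bounded $L^1\to L^1$ operator norm, so $\int_0^T\lVert\Delta_T\rVert \le C'\int_0^T\lVert R_T\rVert = o(\sqrt T)$, which is the claim.

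The main obstacle I anticipate is \textbf{making the resolvent bound uniform in $T$} for the non-convolutive operator $h\mapsto g(t/T)\int_0^t\varphi(t-s)h(s)\dif s$: because of the $g(t/T)$ prefactor this is not a genuine convolution, so one cannot simply invoke Laplace-transform / renewal-theory arguments. The right move is to dominate $g(t/T)\le\lVert g\rVert_{L^\infty}$ and compare with the honest convolution kernel $\bar\varphi$ of~\eqref{equ:bound_function}, whose iterated kernels are summable in $L^1$ precisely because Assumption~\ref{ass:stability} puts the spectral radius of $\int_0^\infty\bar\varphi$ below $1$; a positivity/monotonicity argument (all kernels and $\bar\rho$ are nonnegative) then transfers the bound. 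This is morally the same estimate that underlies Proposition~\ref{prop:deterministic_convergence} and Lemma~\ref{lemma:process_if_well_defined}, so if those are in hand the present Corollary should follow without genuinely new ideas — only the bookkeeping of the two remainder terms against Assumption~\ref{ass:sqrt_integ}.
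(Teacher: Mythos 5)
Your overall strategy is the paper's: reduce to Theorem~\ref{theorem:FTCL} by showing $\sup_{u\in[0,1]}\lVert \mathbb{E}[N^T_{Tu}]-T\int_0^u(\boldsymbol{Id}-\boldsymbol{K}(x))^{-1}\mu(x)\dif x\rVert=o(\sqrt T)$, and then absorb the deterministic recentering into the weak limit. Where you diverge is in how this rate is obtained. The paper simply invokes the second assertion of Proposition~\ref{prop:deterministic_convergence} with $m=\mu$ (the $\mu$-part of $\mathbb{E}[\lambda^T_t]$ from Corollary~\ref{coro:mean_intensity} cancels exactly after the change of variables, so the whole error is the quantity estimated there), which was proved through the $\Delta_T$-grid approximation of the resolvent $K^T$. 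You instead re-derive an equivalent estimate at the level of the mean intensity: you compare $\rho_T(t)=\mathbb{E}[\lambda^T_t]$ with the frozen solution $\bar\rho(t/T)=(\boldsymbol{Id}-\boldsymbol{K}(t/T))^{-1}\mu(t/T)$, obtain a renewal inequation for the difference with a remainder split into a tail term and a frozen-coefficient term, and close via domination by the convolution kernel $\bar\varphi$ and its resolvent $\bar\Psi$ (this is exactly what Lemma~\ref{Lemma:renewal} provides, so the ``uniform-in-$T$ resolvent'' worry you flag is already settled by positivity and Assumption~\ref{ass:stability}). This is a legitimate, arguably more self-contained alternative to citing Proposition~\ref{prop:deterministic_convergence}; the paper's route is shorter only because that proposition is already in hand.

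Two auxiliary claims in your write-up are false, and one of them is actually used. First, Assumption~\ref{ass:sqrt_integ} together with $\varphi\in L^1$ does \emph{not} imply $\varphi\in L^{1+\epsilon}$ (take $\varphi(s)\sim s^{-1}(\log(e/s))^{-2}$ near $0$); this is harmless here because Assumption~\ref{ass:disease} is among the Corollary's hypotheses, so no derivation is needed. Second, and more seriously, Assumption~\ref{ass:sqrt_integ} does \emph{not} imply the finite first moment $\int_0^\infty r\lVert\varphi(r)\rVert\dif r<\infty$ (take $\varphi(r)\sim(1+r)^{-9/5}$: then $\int\sqrt r\,\varphi<\infty$ but $\int r\,\varphi=\infty$), and your bound of the frozen-coefficient term (ii) relies on exactly this. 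The step is repaired by capping the Lipschitz increment with the boundedness of $\bar\rho$, i.e.\ using $\lVert\bar\rho(s/T)-\bar\rho(t/T)\rVert\le C\min\bigl((t-s)/T,\,1\bigr)$, which gives for the $L^1$-in-$t$ norm of (ii) the bound $C'\int_0^\infty\min(r,T)\lVert\varphi(r)\rVert\dif r\le C'\sqrt T\int_0^\infty\sqrt r\,\lVert\varphi(r)\rVert\dif r$, and dominated convergence (as you already use for term (i)) upgrades this to $o(\sqrt T)$. With that one-line modification your argument goes through.
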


\begin{remark}
     Theorem~\ref{theorem:FTCL} is not a \textit{stricto sensu} generalisation of Bacry \textit{et al.}~\cite[Theorem 2]{bacrylimit} and  Deschatre \& Gruet~\cite[Proposition 8]{DeschatreGruet} as they do not rely on Assumption~\ref{ass:disease} to obtain a functional convergence.  In practical terms however, virtually all kernels encountered in the literature verify Assumptions~\ref{ass:stability} and~\ref{ass:disease} alike. It suffices that $\varphi$ be bounded. Hence this includes the exponential $\varphi(t) \propto \exp( - \beta t)$, power law, $\varphi(t) \propto (t+\gamma)^{- \beta}$, gamma $\varphi(t) \propto t^{\gamma} exp( - \beta t)$, or histogram kernel $\varphi(t) = \sum_i \alpha_i \mathbb{1}_{[ib,(i+1)b]}$.
\end{remark}

\begin{remark}\label{remark:multivariate_g}
    There is no obstacle to considering a multivariate $g$ in lieu of a scalar one. With a kernel of the form $(x,s) \mapsto g_{kl}(x) \varphi_{kl}(s)$, Assumption~\ref{ass:stability} becomes $\rho( (\sup_{x} g_{kl}(x) \int_0^{\infty} \varphi_{kl} (s) \dif s)_{kl})<1$ where $\rho$ is the spectral radius, Theorem~\ref{theorem:FTCL} holds with the outer product replaced by the coordinate-wise product, and the proofs remain intact up to some minor technical details -- see section~\ref{section:multivariate_g}. 
\end{remark}

One can remark that while the adjunction of a varying reproduction rate renders most of the process' characteristics intractable, even in the simple exponential case $\varphi(t) \propto \exp(  - \beta t)$, the limits in Theorems~\ref{Th:LLN} and~\ref{theorem:FTCL} remain computable in terms of exact entries of the model.

\section{Application to financial statistics}\label{section:application}

\subsection{A model for prices under liquidity constraints}We revisit the microstructure model of Bacry \textit{et al.}~\cite{bacrymicrostructure}. Suppose one observes the price $(P_t)$ of an asset over $t \in [0,T]$. In the context of high-frequency trading, $(P_t)$ is recorded at the tick-by-tick level and the price trajectory bears a discrete nature. A similar property is retained at lower frequencies by the less liquid assets. A fitting models class then takes the form $P_t = N^+_t -N^-_t$, where $(N^{+}_t)$, respectively $(N^{-}_t)$, is a pure jump process recording positive, respectively negative, price increments. In~\cite{bacrylimit}, $(N^{+}_t,N^{-}_t)$ is a bivariate Hawkes process with baseline intensity and kernel 
\begin{equation}\label{equ:bacrymodel}
\mu= \begin{bmatrix}
    \mu^+\\
    \mu^-
\end{bmatrix}
\textup{  and  }
    \varphi(t) = \begin{bmatrix}
        0 & \alpha \\
        \alpha & 0\\
    \end{bmatrix}
    \Phi(t,\beta),
\end{equation}

where $\Phi(t,\beta)= \beta \exp(-\beta t)$ with $(\beta,\alpha) \in [0,\infty) \times [0,1)$, and $\mu^+=\mu^-$. The antidiagonal structure of $\varphi$ mimics microstucture noise and introduces reversion effects in price dynamics, meaning that an upward price increment is more likely followed by a downward price move and conversely, see the references in~\cite{bacrylimit}. Such behaviour is expected in the presence of a diversified pool of traders implementing broadly uncorrelated strategies. It may also be partially attributed to market makers quoting at both sides of the order book.   Should a large fraction of all participants retreat from the market, one would expect $\alpha$ to decrease as the remaining traders' views may align more often. For low values of $\alpha$, prices tend to move in the same direction one wishes to execute in, resulting in a worse outcome for liquidity takers. In this sense, the reproduction rate $\alpha$ measures the rate at which liquidity replenishes. \\

That participation in the market may vary as time passes is not a purely theoretical scenario. In the context of commodities futures, some agents may be specifically adverse to the physical risk associated with taking delivery, and the typology of traders involved in the market may thus evolve as expiry looms by. What is empirically known and recognised as a stylised fact of commodities markets is the existence of the \textit{Samuelson effect}, which describes a decreasing relation between volatility and time to maturity. We will return to this precise matter further down this section, and refer for now to Jaeck \& Lautier~\cite{JAECK2016300} and Aïd \textit{et al.}~\cite{Aid} for more details.  \\

We consider situations where some price-insensitive directional flow moves $(P_t)$, corresponding to an imbalance $\mu^+ \neq \mu^-$ in $\mu$, while liquidity progressively dries out as $t \to T$ with
\begin{equation}\label{equ:quadractic_decrease}
    \alpha( x)
    =
    \alpha_0 (1- \eta  x^2),
\end{equation}
$0 \leq \eta < 1$. The function $\alpha$ is parameterised so that $\alpha(0) =\alpha_0$ and $\alpha(1)=(1-\eta)\alpha_0<\alpha(0) $.  Say, without loss of generality, that $\mu^+= \mu_0 + \Delta \mu > \mu_0= \mu^-$. The modified price model is then simply the one of~\cite{bacrymicrostructure}, with the adjunction of additional Poisson jumps $(N^{\Delta \mu}_t)$ with intensity $\Delta \mu$ to $(N^+_t)$, where $(N^{\Delta \mu}_t)$ is independent from $(N^+_t,N^-_t)$.  The market reacts to the increased buying activity according to its usual dynamic. The resulting process $(N^{+}_s(\eta,\Delta \mu),N^{-}_ \mu(\eta,\Delta \mu))$ has intensity
\begin{align*}
    \lambda^{+}_t(\eta,\Delta \mu) &=  \mu_0 + \Delta \mu + \int_0^t \alpha(\frac{t}{T}) \Phi(t-s,\beta) \dif N^{-}_s(\eta,\Delta \mu) \\
    \lambda^{-}_t(\eta,\Delta \mu) &=  \mu_0  + \int_0^t \alpha(\frac{t}{T}) \Phi(t-s,\beta) (\dif  N^{+}_s + \dif N^{\Delta \mu}_s),
\end{align*}

where $N^+_t(\eta,\Delta \mu)= N^{+}_t +  N^{\Delta \mu}_t$. It is again a (locally stationary) Hawkes process.  With $\Delta \mu$ set and given, we denote by
\begin{equation*}
    P_t^\eta 
    =
    N^+_t(\Delta \mu,\eta) 
    -
    N^- (\Delta \mu,\eta) 
\end{equation*}

the modified price. The additional flow may be roughly conceptualized as a single trader executing a large buy meta-order of size $Q$ via a \textsc{twap}\footnote{Time weighted average price, see Almgren \& Chriss~\cite{AG_01} for an equivalent class of strategies.} algorithm, which should induce a move of size $\Delta \mu \propto \sqrt{Q}$. We wish to characterize the market's capacity to absorb the increased demand as a function of $\eta$. Consistently with this interpretation, we consider the additional slippage due to $\alpha$ decreasing as time passes (see Remark~\ref{remark:slippage}). That is, the proportion of marginal price distortion which can be attributed to variations in $\alpha$ for each additional unit of demand $\Delta \mu$ .
\begin{proposition}\label{lemma:price_model}
Under Assumptions~\ref{ass:g_and_mu_are_C0} to~\ref{ass:disease}, for any $u \in [0,1]$ and any $\eta \in [0,1]$,
    \begin{equation*}
        \frac{\mathbb{E}\big[ P^{\eta}_{Tu}-P^\eta_{0}\big]}
        {\mathbb{E}\big[ P^{0}_{Tu} - P^0_0\big]}
        \to 
        \frac{
        \textup{atanh} ( \sqrt{\zeta_0} u) }{\sqrt{\zeta_0} u}
        \textup{  as  }
        T \to \infty
    \end{equation*}
    where $\zeta_0=\alpha_0 \eta (1+\alpha_0)^{-1}$.
\end{proposition}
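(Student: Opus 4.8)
The plan is to compute both numerator and denominator via Theorem~\ref{theorem:FTCL} (or rather its first-moment corollary, since $\mathbb{E}[N^T_{Tu}]$ already appears in that statement), reducing everything to the deterministic limit $\int_0^u (\boldsymbol{Id}-\boldsymbol{K}(x))^{-1}\mu(x)\,dx$ applied to the bivariate price model. First I would identify the ingredients of the locally stationary Hawkes process driving $P^\eta$: the baseline is the constant vector $\mu = (\mu_0+\Delta\mu,\mu_0)^{\trans}$, the reproduction function is $g(x) = \alpha(x) = \alpha_0(1-\eta x^2)$, and the kernel is $\varphi(t) = \begin{bmatrix} 0 & 1 \\ 1 & 0\end{bmatrix}\Phi(t,\beta)$ with $\int_0^\infty \Phi(t,\beta)\,dt = 1$. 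Hence $\boldsymbol{K}(x) = \alpha(x)\begin{bmatrix} 0 & 1 \\ 1 & 0\end{bmatrix}$, and $(\boldsymbol{Id}-\boldsymbol{K}(x))^{-1} = \frac{1}{1-\alpha(x)^2}\begin{bmatrix} 1 & \alpha(x) \\ \alpha(x) & 1\end{bmatrix}$. Note Assumptions~\ref{ass:g_and_mu_are_C0}--\ref{ass:disease} are satisfied: $g$ is $C^1$, $\|g\|_{L^\infty} = \alpha_0 < 1$ with $\rho(\boldsymbol{K}) = \alpha_0 < 1$, and $\Phi(\cdot,\beta)$ is bounded hence in $L^{1+\epsilon}$.

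Next I would extract the price component. Since $P^\eta_t = N^+_t(\Delta\mu,\eta) - N^-_t(\Delta\mu,\eta)$, and $N^+_t(\Delta\mu,\eta) = N^+_t + N^{\Delta\mu}_t$ with $N^{\Delta\mu}$ an independent Poisson process of intensity $\Delta\mu$ contributing $\mathbb{E}[N^{\Delta\mu}_{Tu}] = \Delta\mu\, Tu$, applying Theorem~\ref{theorem:FTCL} (first-moment form) componentwise gives
\begin{equation*}
\frac{1}{T}\mathbb{E}[P^\eta_{Tu}] \to \int_0^u \big\langle (1,-1),\ (\boldsymbol{Id}-\boldsymbol{K}(x))^{-1}\mu \big\rangle\,dx.
\end{equation*}
A direct matrix-vector computation yields $(\boldsymbol{Id}-\boldsymbol{K}(x))^{-1}\mu = \frac{1}{1-\alpha(x)^2}\big((\mu_0+\Delta\mu) + \alpha(x)\mu_0,\ \alpha(x)(\mu_0+\Delta\mu)+\mu_0\big)^{\trans}$, and taking the difference of the two entries, the $\mu_0$ terms cancel in a way that leaves
\begin{equation*}
\big\langle (1,-1),\ (\boldsymbol{Id}-\boldsymbol{K}(x))^{-1}\mu\big\rangle = \frac{\Delta\mu(1-\alpha(x))}{1-\alpha(x)^2} = \frac{\Delta\mu}{1+\alpha(x)}.
\end{equation*}
Therefore $\mathbb{E}[P^\eta_{Tu}-P^\eta_0]/T \to \Delta\mu\int_0^u \frac{dx}{1+\alpha_0(1-\eta x^2)}$, and since $P^\eta_0 = 0$ a.s., the ratio in the statement converges to $\big(\int_0^u \frac{dx}{1+\alpha_0(1-\eta x^2)}\big)\big/\big(\int_0^u \frac{dx}{1+\alpha_0}\big)$, the denominator being the $\eta=0$ specialization which is simply $u/(1+\alpha_0)$.

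It remains to evaluate $\int_0^u \frac{dx}{1+\alpha_0 - \alpha_0\eta x^2}$. Writing $1+\alpha_0 - \alpha_0\eta x^2 = (1+\alpha_0)\big(1 - \frac{\alpha_0\eta}{1+\alpha_0}x^2\big) = (1+\alpha_0)(1-\zeta_0 x^2)$ with $\zeta_0 = \alpha_0\eta/(1+\alpha_0)$, and using $\int \frac{dx}{1-\zeta_0 x^2} = \frac{1}{\sqrt{\zeta_0}}\mathrm{atanh}(\sqrt{\zeta_0}\,x)$ (valid since $\zeta_0 \in [0,1)$ and $\sqrt{\zeta_0}\,u < 1$), the numerator equals $\frac{1}{(1+\alpha_0)\sqrt{\zeta_0}}\mathrm{atanh}(\sqrt{\zeta_0}\,u)$. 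Dividing by $u/(1+\alpha_0)$ gives exactly $\mathrm{atanh}(\sqrt{\zeta_0}\,u)/(\sqrt{\zeta_0}\,u)$, as claimed (with the convention that the $\eta=0$, i.e. $\zeta_0 = 0$, case is read as the limit, equal to $1$). The only genuine subtlety — and the step I would be most careful about — is justifying that the first-moment convergence $\mathbb{E}[N^T_{Tu}]/T \to \int_0^u(\boldsymbol{Id}-\boldsymbol{K})^{-1}\mu$ holds pointwise in $u$ for this model; this is not stated as a standalone result but follows from Theorem~\ref{Th:LLN} together with the $L^2(\Prob)$ convergence it asserts (which gives convergence of means), or alternatively is implicit in the centering used in Theorem~\ref{theorem:FTCL} — so I would cite Theorem~\ref{Th:LLN} for this. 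Everything else is elementary matrix algebra and a standard integral.
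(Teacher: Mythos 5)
Your proposal is correct and follows essentially the same route as the paper's proof: identify the bivariate locally stationary Hawkes structure with baseline $(\mu_0+\Delta\mu,\mu_0)^{\trans}$ and $\boldsymbol{K}(x)=\alpha(x)\bigl(\begin{smallmatrix}0&1\\1&0\end{smallmatrix}\bigr)$, apply Theorem~\ref{Th:LLN} to get the limiting expected return $\Delta\mu\int_0^u (1+\alpha(s))^{-1}\dif s$, and evaluate the ratio via the $\mathrm{atanh}$ antiderivative. Your extra care in checking the assumptions and in noting that the $L^2(\Prob)$ convergence of Theorem~\ref{Th:LLN} justifies convergence of the means is a welcome (and correct) elaboration of a step the paper leaves implicit.
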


\begin{proof}
    For any $\alpha \colon [0,1] \mapsto [0,1)$, and any baseline intensity of the form $\mu^+ - \mu^- = \Delta \mu$, by Theorem~\ref{Th:LLN}, the expected return $T^{-1}\mathbb{E}\big[ P_{Tu}-P_0\big]$ of a locally stationary Hawkes price model $(P_t)$ with antidiagonal structure~\ref{equ:bacrymodel} and reproduction function $\alpha$ converges towards
    \begin{align*}
        \int_0^u 
        \frac{(\mu^+ +  \alpha(s) \mu^-)-
            (\mu^- +  \alpha(s) \mu^+)}{1-\alpha(s)^2}  
        =
        \int_0^u
        \frac{\Delta \mu (1-\alpha(s))}{(1-\alpha(s))(1+\alpha(s))}
        \dif s
        =
        \Delta \mu
        \int_0^u
        \frac{1 }{(1+\alpha(s))}
        \dif s.
    \end{align*}
    The relative slippage is the ratio of $T^{-1}\mathbb{E}\big[ P^{\eta}_{Tu}-P^{\eta}_0\big]$ and $T^{-1}\mathbb{E}\big[ P^{0}_{Tu}-P^{0}_0\big]$ hence it converges to
    \begin{equation*}
       \Big( \int_0^u \frac{\Delta \mu}{(1+\alpha_0)(1-\zeta_0 s^2)} \dif s \Big)  \Big( \frac{ u\Delta \mu }{1+\alpha_0}\Big)^{-1}
    \end{equation*}
    when $\alpha$ takes the shape~\eqref{equ:quadractic_decrease}
    which is the desired result.
\end{proof}

\begin{remark}
    The proof scheme for Proposition~\ref{lemma:price_model} remains valid for a varying baseline $\mu \colon [0,1] \mapsto [0,\infty)^2$.  In particular, if $\mu^+(t) - \mu^-(t) > \Delta \mu$ for any $t \in [0,1]$, then, an asymptotic lower bound for the relative slippage is still $\textup{atanh}( \sqrt{\zeta_0}u) (\sqrt{\zeta_0}u)^{-1}$.
\end{remark}

\begin{remark}\label{remark:slippage}
    The term "slippage" is borrowed from the financial literature, where it refers to the 
 difference between the price at which one intends to execute a marginal order, and the price at which it is actually filled. Simple models express slippage as a function of volatility, and the present one does in fact establish such a link. However, $\alpha$ being a rate of change rather than an absolute measure of liquidity, the relation is not quite straightforward. From the same arguments as Bacry \textit{et al.}~\cite{bacrylimit}, the limit volatility $\sigma$ of $(P_t)$ expresses as 
    \begin{equation*}
        \sigma(u)= \frac{\mu^++\mu^-}{(1-\alpha(u))(1+\alpha(u))^2},
    \end{equation*}

    a decreasing function of $\alpha$ for $\alpha \leq \frac{1}{3}$ and an increasing function for $\alpha \geq \frac{1}{3}$.  The somewhat indirect link with volatility suggests some form of trade-off. When $\alpha$ decreases, directional price movement become more persistent, and $\sigma$ decreases too. If $\alpha$ decreases further however, the amplitude of directional variations increases beyond a certain threshold, and the relation reverts.
\end{remark}

A consequence of remark~\ref{remark:slippage} is that variations in $\alpha$ primarily affect the persistence of trends in the dynamics of $(P_t)$, whilst their relation with volatility is more ambiguous. Hence Proposition~\ref{lemma:price_model} should \textit{not} be understood as a fundamental basis for the Samuelson effect. An increase in volatility as $t \to T$ is better represented by an increase in the amplitude of $\mu(x)$ as $x \to 1$, which bears an unequivocal amplifying effect on $\sigma$, and which our results do allow. We refer to Deschatre \& Gruet~\cite{DeschatreGruet} for such a model, where $\mu(x) \propto \exp( \kappa x)$ for some $\kappa>0$. In conclusion, a price model of the type~\eqref{equ:bacrymodel} will only clearly attribute the Samuelson effect to exogenous price moves, and we expect that this characteristic reflects actual market dynamics to some degree.  While we are not immediately able to confront our results with market data, we may still evaluate our approach via numerical simulations, to which we now proceed. 

\subsection{Numerical experiments} Consider first an univariate locally stationary Hawkes process with reproduction function $\alpha$, unit baseline $\mu=1$, and exponential kernel $\varphi(t) = \exp(-t)$. We perform i.i.d simulations $N^1_t,  \hdots, N^n_t$ of the process for the two choices $\alpha(x)= 0.8\exp( - 10 (x-\frac{1}{2})^2)$ and $\alpha(x)=0.8x$. In figure~\ref{fig:MC1} we compare the Monte-Carlo estimator $\frac{1}{n}\sum_{k=1}^n N^k_t$ with the theoretical limit of Theorem~\ref{Th:LLN}.

\begin{figure}[H]
    \centering
    \includegraphics[width=0.35\linewidth]{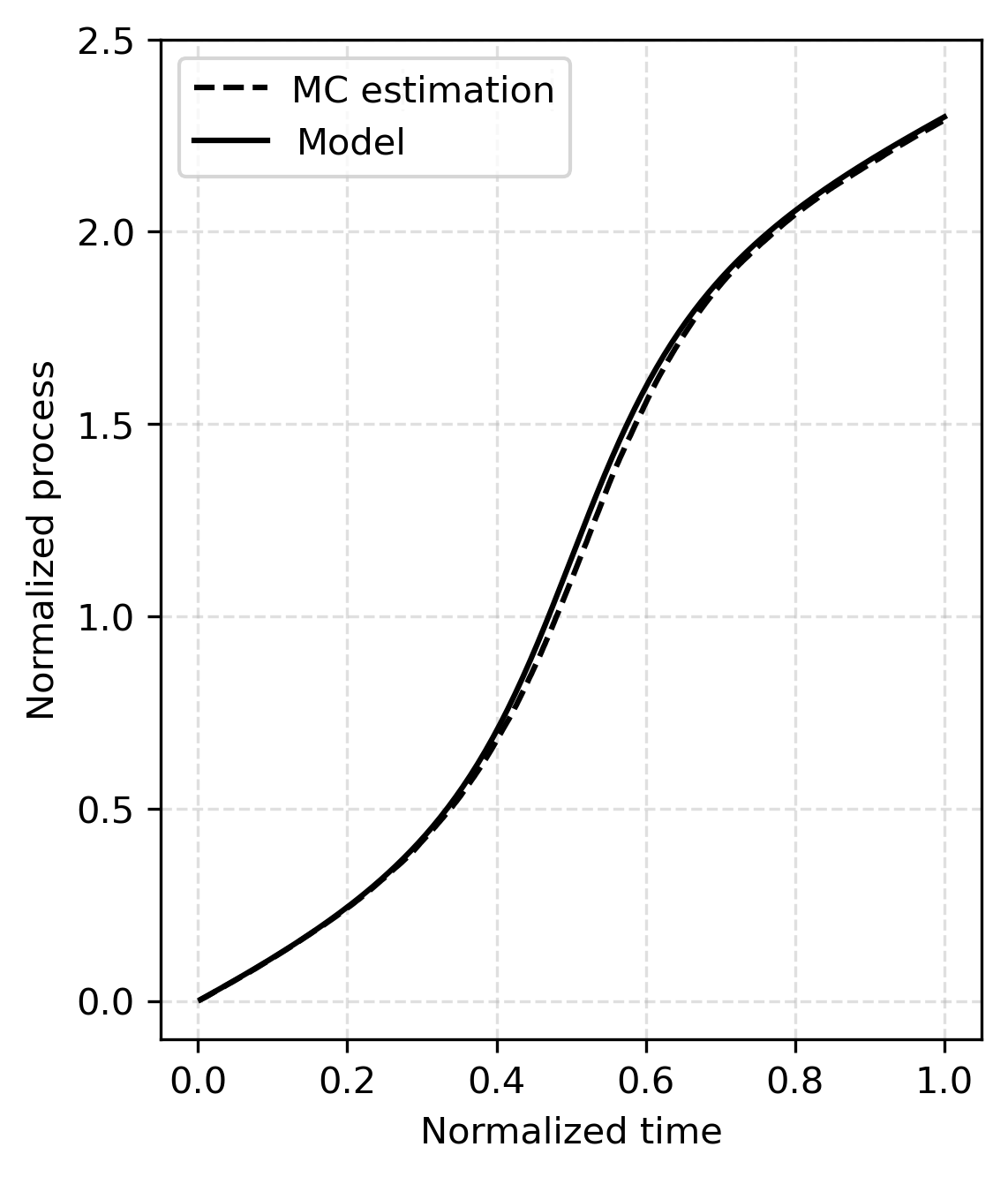}
    \includegraphics[width=0.35\linewidth]{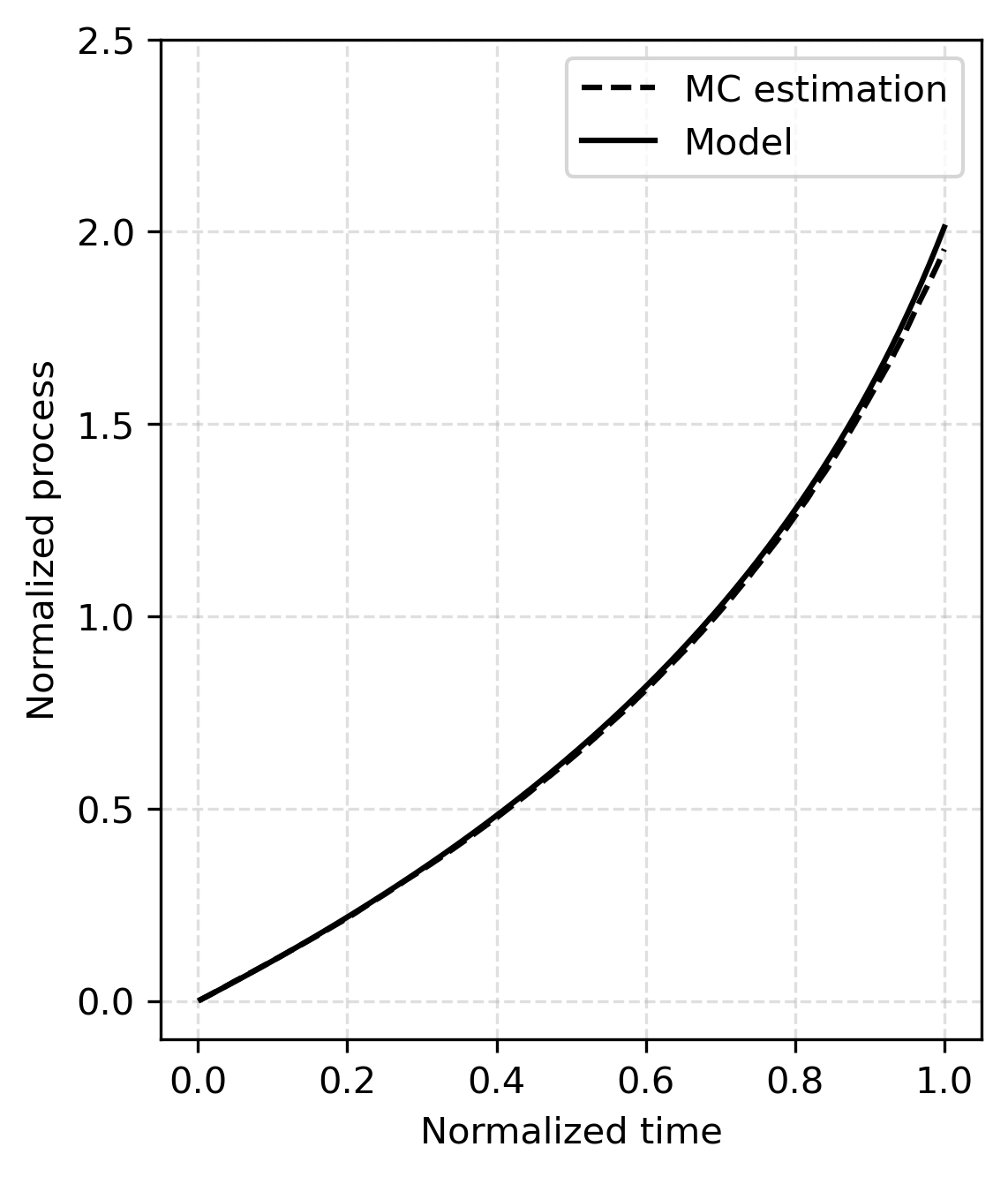}
    \caption{Theoretical limit of the normalized process $T^{-1}N^T_t$ as a function of the normalized time $\frac{t}{T}$ (solid line), as compared to its empirical estimation  $n^{-1} \sum_{k=1}^n T^{-1} N^{k,T}_t$ on $n=500$ independent simulations $(N^{k,T}_t)$ of the process with $T=200$ (dotted line), with a Gaussian reproduction function (left) and a linear reproduction function (right).}
    \label{fig:MC1}
\end{figure}

The figure confirms the good fit of Theorem~\ref{Th:LLN} with the empirical behaviour of the process. We proceed to our price model, and confront Lemma~\ref{remark:slippage} with synthetic data. We simulate $n=3000$ trajectories of a price model~\eqref{lemma:price_model} with the varying reproduction rate~\eqref{equ:quadractic_decrease} for three different values of $\eta$, and compare the average price with its theoretical counterpart in figure~\ref{fig:MC2}. On its sides are also represented the theoretical slippage and reproduction rate.

\begin{figure}[H]
    \centering
    \includegraphics[width=0.3\linewidth]{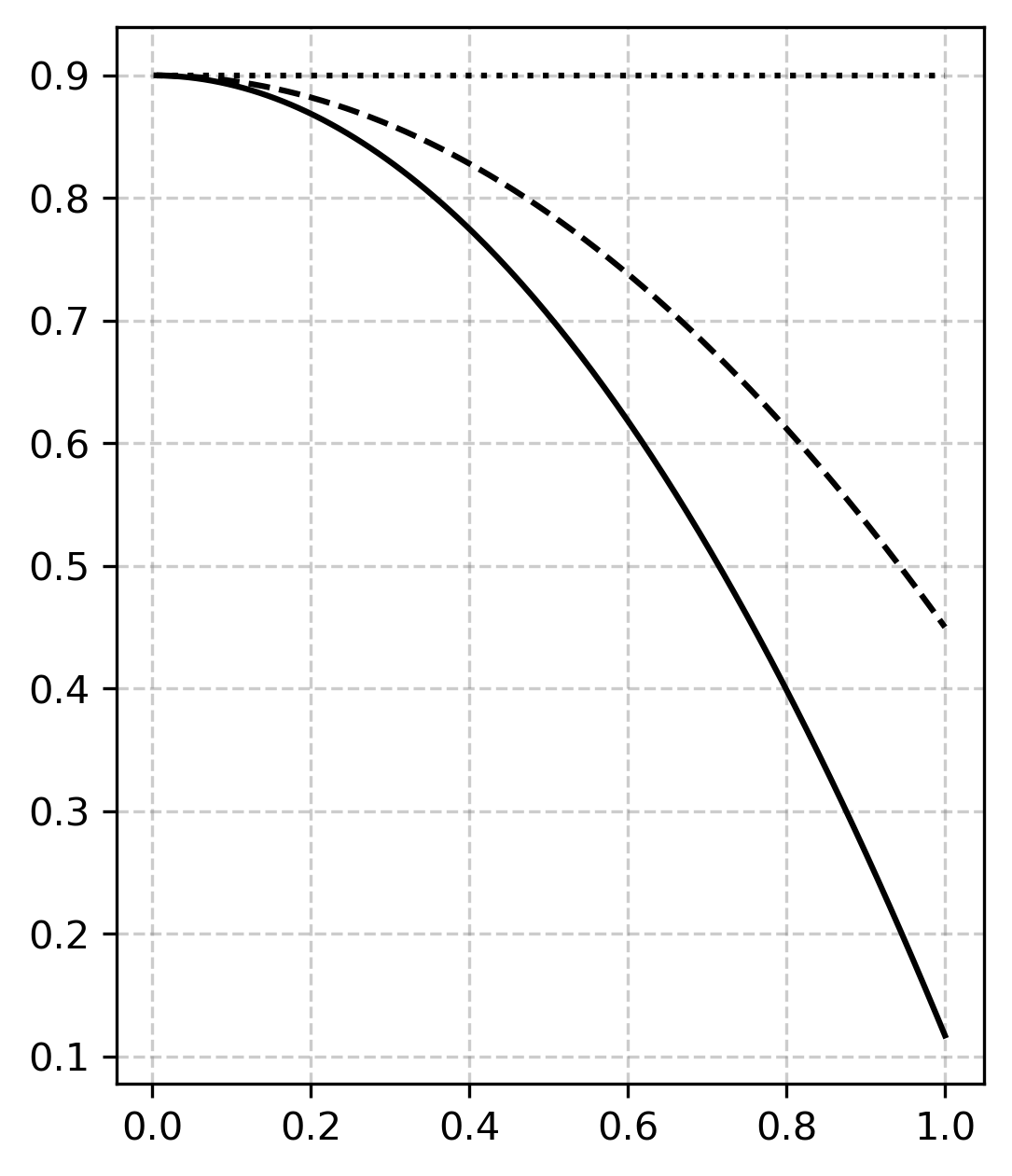}
    \includegraphics[width=0.3\linewidth]{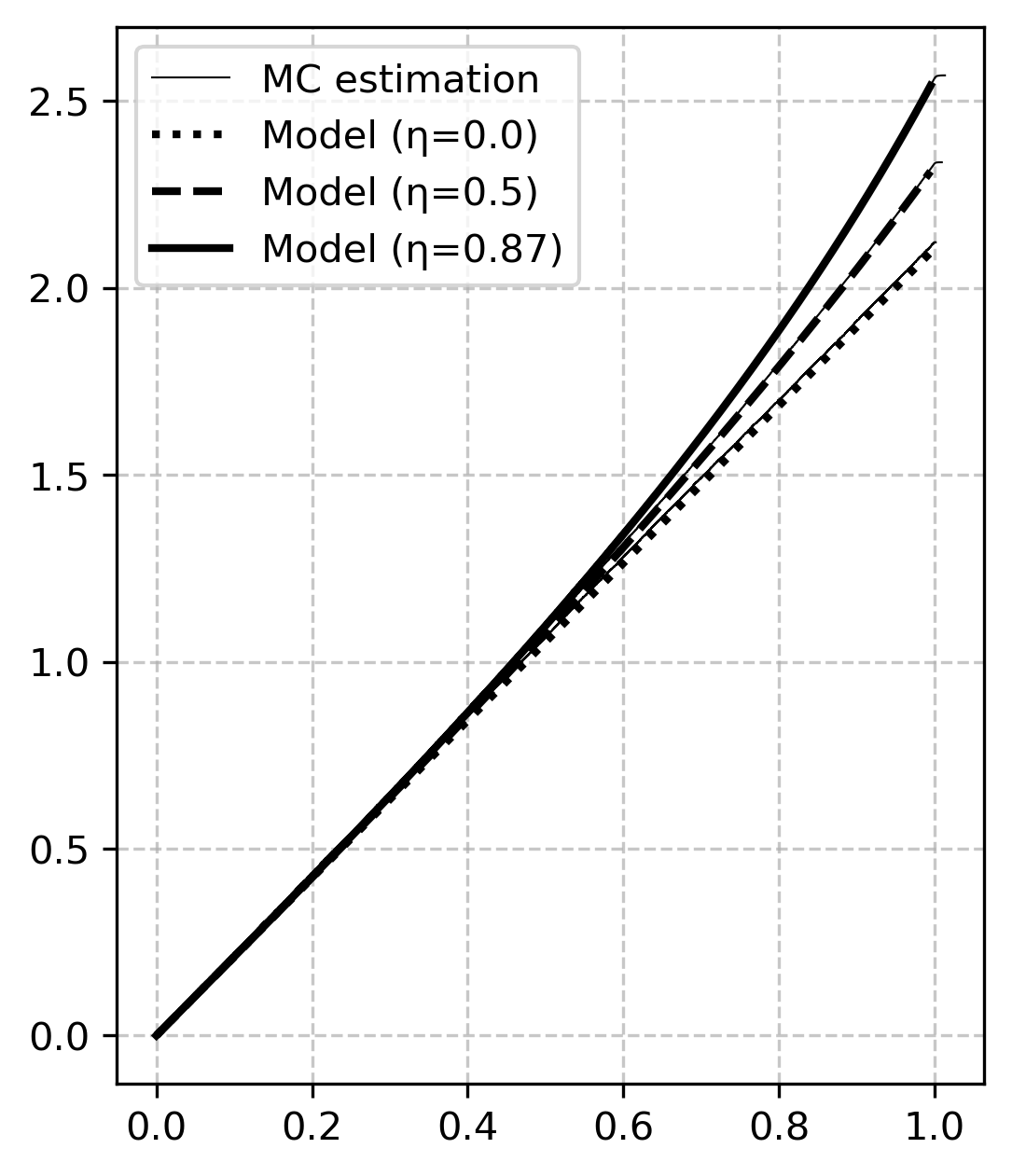}
    \includegraphics[width=0.3\linewidth]{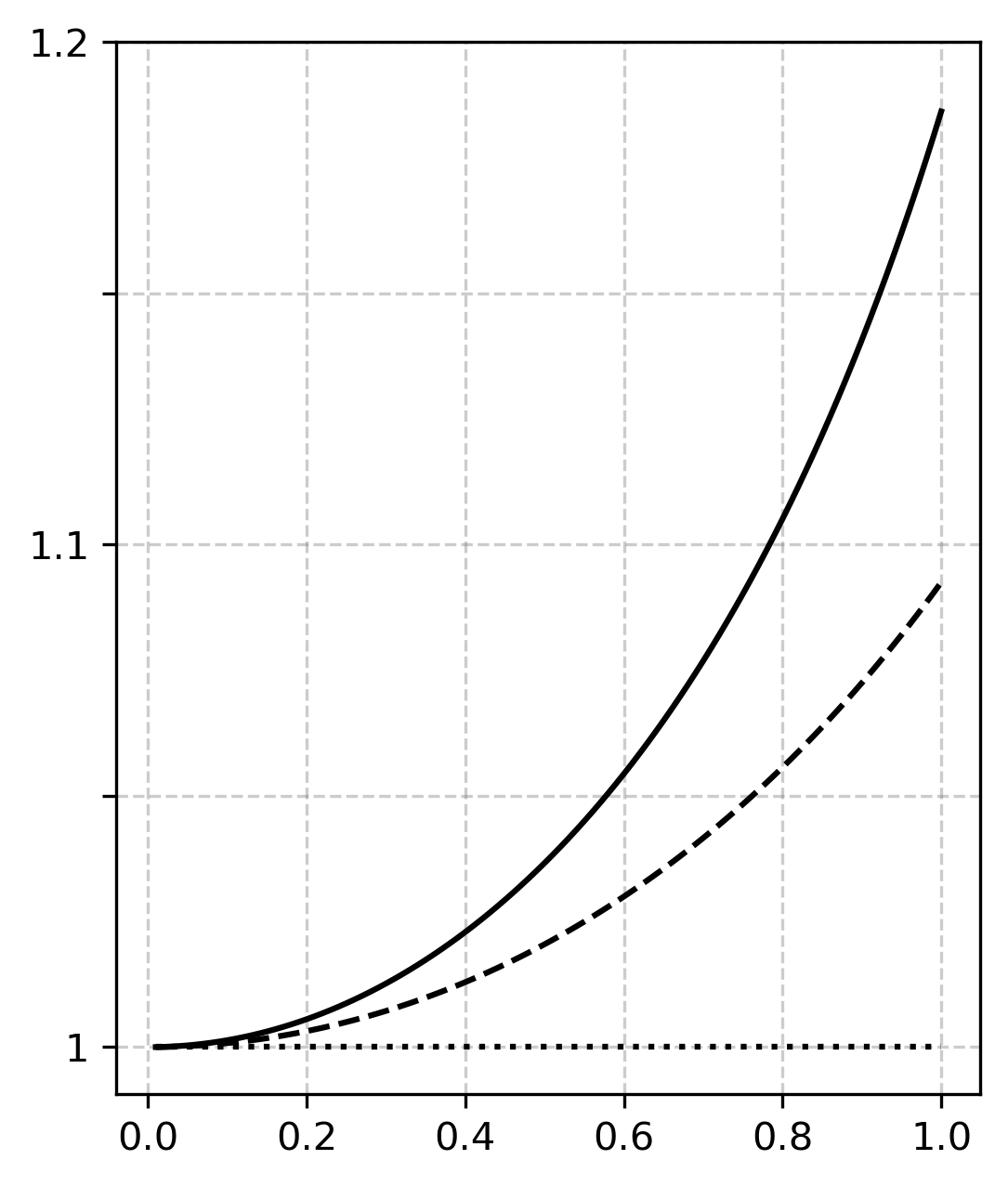}
    \caption{Reproduction function $\alpha$ (left), Expected return (middle), and Slippage (Right) for a locally stationary Hawkes prices model. Expected returns are computed from Theorem~\ref{Th:LLN} and compared to the Monte-Carlo estimator $n^{-1} \sum_{k=1}^n T^{-1} P^{k,T}_t$ for $n=3000$ independent simulations. }
    \label{fig:MC2}
\end{figure}

\section{Discussion}

We have established in section~\ref{section:limit_theorems} novel functional law of large number and central limit Theorem for locally stationary Hawkes processes, and presented in Section~\ref{section:application} some of their applications to statistical finance. The theoretical side of our work opens some interesting perspectives. Theorem~\ref{theorem:FTCL} provides a proof of the existence of a globally defined weak limit for a class of Hawkes processes admitting no stationary distribution, such feature being of possible interest in the context of statistical inference. It is also possible to envision a \textit{marked} extension of Theorem~\ref{theorem:FTCL}, within which external covariates are allowed to influence the law of the process. As a matter of fact, the introduction of a time-dependent reproduction rate may be regarded as the adjunction of non-stationary deterministic marks to the kernel of the process.\\

Regarding the applied part of the article, we acknowledge that Proposition~\ref{lemma:price_model} is dependent upon our supposition of a quadratic decay in the amplitude of the reproduction rate. Some further statistical work and estimation on market data are thus required to assess the credibility of such hypothesis. We refer to Roueff \textit{et al.}~\cite{roueff2} and Mammen \& Müller~\cite{Mammen} for existing developments on the matter.

\section{Preparation for the proofs}

\subsection{On Volterra equations}

We first recall, mainly from Gripenberg~\cite[Chapter 9]{gripenberg}, some classical results and notations for Volterra integral equations in general, and renewal equations in particular. A Volterra kernel is any function $k \colon (s,t)  \in \R^2 \mapsto \mathcal{M}_p(\R)$ such that $k(t,s)=0$ when $s>t$. If $a$ and $b$ are two Volterra kernels and $f$ some Borel function over $[0,T]$, the $\star$ products over Volterra kernels are defined by
\begin{align*}
    (a \star b)(t,s) = \int_s^t a(t,u) b(u,s) \dif u
    \hspace{0.2cm} \textup{and} \hspace{0.2cm}
    (a \star f) (t) = \int_0^t a(t,u) f(u) \dif u.
\end{align*}

We will be concerned with functional equations of the form
\begin{equation*}\label{equ:general_Volterra_equation}
    x(t) = m(t) + k \star x (t), \hspace{0.1cm} t \in [0,T],
\end{equation*}

where $k \colon [0,T]^2 \mapsto \mathcal{M}_p(\R)$ is a Volterra kernel and $m \colon [0,T] \mapsto \R^p$ a bounded function. When it is defined, the resolvent of a Volterra kernel $k$ is the series $K=\sum_{i=1}^{\infty} k^{\star i}$ where $k^{\star i}$ is the $i$-fold  $\star$ product of $k$ with itself, with the convention $k^{\star 1}= k$. In the sequel, we will work with the kernel
\begin{equation*}
    k^T(t,s) = g\big( \frac{t}{T} \big) \varphi(t-s),
\end{equation*}
where we have identified $\varphi$ to a function of $\R$ with support in $[0,\infty)$. The kernel $k^T$ verifies for any $ (s,t) \in [0,T]^2$ the coordinate-wise inequality $k^T(t,s) \leq \Bar{\varphi}(t-s)$, where the bounding kernel $\Bar{\varphi}$ is defined in~\eqref{equ:bound_function}. Thus $\sum_{i=1}^n (k^T)^{\star i}(t,s) \leq \sum_{i=1}^n \bar{\varphi}^{\star i}(t-s)$ for any $n \in \mathbb{N}^{\star}$.  As $\int_0^{\infty} \bar{\varphi}^{\star i}(t) \dif t = ( \int_0^{\infty} \bar{\varphi}(t) \dif t)^i$, the series of the $\bar{\varphi}^{\star i}$  converges in $L^1[0,\infty)$. Hence the series of the $(k^T)^{\star i}(t,s)$ converges for almost every $(s,t) \in [0,T]^2$, and the kernel $k^T$ admits a well defined resolvent $K^T = \sum_{i=1}^{\infty} (k^T)^{\star i}$.

\begin{Lemma}[Theorem 3.6 in Chapter 9 of Gripenberg~\cite{gripenberg}]\label{Lemma:gripenberg}
    Let $T>0$, and $m \colon [0,T] \mapsto \R^p$. Then any function $x \colon [0,T] \mapsto \R$ such that \begin{equation}\label{equ:our_volterra_equation}
        x(t) 
        \leq 
        m(t)
        +
        \int_0^t 
        k^T(t,s) x(s) \dif s, \hspace{0.1cm} t \in [0,T]
    \end{equation}
    where $k^T(t,s) 
        =
        g(\frac{t}{T}) \varphi(t-s)$
    also verifies \begin{equation}\label{equ:our_volterra_solution}
        x(t) 
        \leq 
        m(t)
        +
        \int_0^t
        K^T(t,s) m(s) \dif s,
    \end{equation}
    with 
        $ K^T(t,s) =
        \sum_{i=1} (k^T)^{\star k}(t,s) $. If furthermore equality holds in~\eqref{equ:our_volterra_equation}, it holds in~\eqref{equ:our_volterra_solution} too.
\end{Lemma}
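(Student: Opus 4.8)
This is a comparison/fixed-point result for a Volterra inequality. The statement packages three claims: (i) the resolvent $K^T = \sum_{i\ge 1}(k^T)^{\star i}$ is well-defined and in the same $L^1$-flavoured class as $k^T$; (ii) any function satisfying the integral inequality~\eqref{equ:our_volterra_equation} also satisfies the resolvent bound~\eqref{equ:our_volterra_solution}; (iii) equality propagates. The plan is to reduce everything to iteration of the inequality together with the convergence of the resolvent series, which was already essentially established in the paragraph preceding the lemma.

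\textbf{Step 1 (resolvent exists).} I would first record that, by the discussion above the lemma, $k^T(t,s)\le \bar\varphi(t-s)$ coordinate-wise, that $\bar\varphi^{\star i}$ has $L^1[0,\infty)$-norm equal to $(\int_0^\infty\bar\varphi)^{i}$, and that by Assumption~\ref{ass:stability} the matrix $\int_0^\infty\bar\varphi(t)\dif t$ has spectral radius $<1$; hence $\sum_i \bar\varphi^{\star i}$ converges in $L^1$ and dominates $\sum_i (k^T)^{\star i}$, so $K^T$ is well-defined for a.e.\ $(s,t)$ and is itself dominated by $\bar K:=\sum_i\bar\varphi^{\star i}\in L^1_{loc}$. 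This justifies all the interchanges of sum and integral used below (dominated convergence / Tonelli against the $\bar\varphi$-bound).

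\textbf{Step 2 (iterate the inequality).} Starting from $x \le m + k^T\star x$, substitute the inequality into itself: since $k^T\ge 0$ coordinate-wise, $k^T\star(\cdot)$ is monotone, so $x \le m + k^T\star m + (k^T)^{\star 2}\star x$, and inductively $x \le \sum_{i=0}^{n} (k^T)^{\star i}\star m + (k^T)^{\star(n+1)}\star x$ (with $(k^T)^{\star 0}\star m := m$). Here I must be slightly careful: $x$ is only assumed bounded? — actually the lemma only hypothesises $m$ bounded and $x$ a function satisfying the inequality; but in our application $x$ will be nonnegative and bounded by a quantity controlled by $\bar N$, so the remainder term $(k^T)^{\star(n+1)}\star x$ is bounded in absolute value by $\|x\|_{L^\infty[0,T]}\int_0^t\bar\varphi^{\star(n+1)}(t-s)\dif s \le \|x\|_{L^\infty[0,T]}(\int_0^\infty\bar\varphi)^{n+1}\to 0$ as $n\to\infty$ by spectral radius $<1$. (If one wants the lemma verbatim as in Gripenberg one invokes his Theorem~3.6 directly; I would state it the pragmatic way, noting $x$ is locally bounded, which is all we ever use.) Passing to the limit gives $x \le m + (\sum_{i\ge1}(k^T)^{\star i})\star m = m + K^T\star m$, which is~\eqref{equ:our_volterra_solution}.

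\textbf{Step 3 (equality case).} If equality holds in~\eqref{equ:our_volterra_equation}, the same substitution is an exact identity at each stage: $x = \sum_{i=0}^{n}(k^T)^{\star i}\star m + (k^T)^{\star(n+1)}\star x$, and the remainder again vanishes in the limit, giving $x = m + K^T\star m$. The only subtlety is again controlling the remainder, which is identical to Step~2.

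\textbf{Main obstacle.} There is no deep difficulty — the result is classical (it is literally cited as Gripenberg, Ch.~9, Thm~3.6). The one genuine point requiring care is the tail term $(k^T)^{\star(n+1)}\star x$: one needs an a priori local-boundedness (or at least local integrability) control on $x$ to kill it, and this is exactly why Assumption~\ref{ass:stability} (spectral radius strictly below one, not merely $\le 1$) is invoked. In the write-up I would simply remark that in every application $x$ is dominated by the first moment of the dominating Hawkes process $\bar{\boldsymbol N}$, which is finite on $[0,T]$ by Lemma~\ref{lemma:process_if_well_defined}, so the tail vanishes geometrically; alternatively one cites Gripenberg directly and is done.
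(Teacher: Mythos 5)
The paper does not actually prove this lemma: it is invoked verbatim as Theorem 3.6 in Chapter 9 of Gripenberg~\cite{gripenberg}, with the only in-paper work being the paragraph preceding the statement, where the domination $k^T(t,s)\le\bar\varphi(t-s)$ and Assumption~\ref{ass:stability} are used to show that the Neumann series $K^T=\sum_{i\ge1}(k^T)^{\star i}$ converges. Your proposal supplies the standard self-contained proof instead — iterate the inequality using the nonnegativity (hence monotonicity) of $k^T\star(\cdot)$, obtain $x\le\sum_{i=0}^{n}(k^T)^{\star i}\star m+(k^T)^{\star(n+1)}\star x$, and kill the remainder via the coordinate-wise bound by $\bigl(\int_0^\infty\bar\varphi\bigr)^{n+1}$, whose powers vanish because the spectral radius is below one — and this argument is correct, including the equality case. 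What the citation buys the authors is precisely the point you flag as the "main obstacle": Gripenberg's theorem needs no a priori control on $x$ beyond (local) integrability, whereas your iteration genuinely requires $x$ to be locally bounded (or at least locally integrable with the tail term controlled), a hypothesis absent from the statement as written. Since in every use in the paper ($x=\lambda^T$ pathwise, or $x=\mathbb{E}[\lambda^T]$, both dominated via the stationary Hawkes comparison of Lemma~\ref{lemma:process_if_well_defined}) this control is available, your pragmatic version of the lemma suffices for the paper's purposes; if you want the statement at the stated level of generality you should either add the local-boundedness hypothesis explicitly or defer to Gripenberg as the authors do.
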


\begin{remark}\label{remark:convolution}
    The notation $\star$ is consistent with the usual convolution product, which corresponds to the special case where the values $a(t,s)$and $b(t,s)$ of the two Volterra kernels depend only on $t-s$. Making the identification $\Tilde{a} (t-s) \equiv a(t,s) \equiv a(t-s)$ and $\Tilde{b} (t-s) = b(t,s)$ the convolution product is recovered as $(\Tilde{a} \star \Tilde{b}) (t-s) \equiv (a \star b)(t,s)$. Likewise, the resolvent $A$ of the kernel $a$ equivalently expresses as function $A(t,s)= \Tilde{A}(t-s)= \sum_{i=1}^{\infty} a^{\star i}(t-s)$ of a single variable.
\end{remark}
\begin{remark}\label{remark:renewal}
    When $k(t,s)$ is convolutive, i.e there exists some integrable function $f \colon \R^+ \mapsto \R^p$ such that $k(t,s) = f(t-s)$, the Volterra equation~\eqref{equ:our_volterra_equation} is referred to as a \textit{renewal equation}. Taking $m$ as the restriction to $[0,T]$ of some $\Tilde{m} \colon \R \mapsto \R^p$, the domain of equation~\eqref{equ:our_volterra_equation} and its solution can then be extended from $t \in [0,T]$ to $t \in \R^+$, and the global solution is unique over $\R^+$ -- see Bacry \textit{et al.}~\cite[Lemma 3]{bacrylimit}. 
\end{remark}

Following remarks~\ref{remark:convolution} and~\ref{remark:renewal}
 we introduce a convolution-specific version of Lemma~\ref{Lemma:gripenberg}.

 \begin{Lemma}[ Lemma 3 in Bacry \textit{et al.} \cite{bacrylimit} or Lemmata 23-24 in Delattre \textit{et al.}~\cite{DelattreHawkesLargeNetworks}]\label{Lemma:renewal}
     Let $m \colon \R \mapsto [0,\infty)^p$ be a bounded function. Let $\phi \colon [0,\infty)] \mapsto \mathcal{M}_p(\R^+)$ be such that the matrix  $\int_0^{\infty} \phi(t) \dif t$ has spectral radius below $1$. Then,
     any function $x \colon \R^+ \mapsto \R^p$ such that\begin{equation}\label{equ:renewal_equation}
         x(t)
         \leq 
         m(t)
         +
         \int_0^t \phi(t-s) m(s) \dif s,
         \hspace{0.1cm}
         t \geq 0
     \end{equation}
     also verifies\begin{equation}\label{equ:renewal_solution}
         x(t) \leq 
         m(t) + 
         \int_0^t \psi(t-s) m(s) \dif s,
     \end{equation}
     with $\psi= \sum_{i=1}^{\infty}\phi^{\star i}$ well-defined in $L^1[0,\infty)$. If furthermore equality holds in~\eqref{equ:renewal_equation}, it holds in~\eqref{equ:renewal_solution} too.
 \end{Lemma}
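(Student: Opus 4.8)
The plan is to first pin down the properties of $\psi$ claimed in the statement and then obtain~\eqref{equ:renewal_solution}, either by specialising Lemma~\ref{Lemma:gripenberg} to the translation-invariant case or, equivalently, by a direct Picard iteration. For well-posedness of $\psi$: since $\phi$ takes values in $\mathcal{M}_p(\R^+)$, the iterated convolutions $\phi^{\star i}$ are entry-wise nonnegative, and the usual Fubini computation (equivalently, in Laplace-transform terms $\widehat{\phi^{\star i}}=\widehat{\phi}^{\,i}$, evaluated at the origin) gives $\int_0^\infty\phi^{\star i}(t)\dif t = A^i$ entry-wise, where $A:=\int_0^\infty\phi(t)\dif t$. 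As $A$ is a nonnegative matrix with $\rho(A)<1$, the Neumann series $\sum_{i\ge 0}A^i$ converges to $(\boldsymbol{Id}-A)^{-1}$ and $\sum_i\lVert A^i\rVert<\infty$; by monotone convergence the partial sums $\sum_{i=1}^n\phi^{\star i}$ then increase in $L^1[0,\infty)$ to some $\psi$ with $\int_0^\infty\psi=(\boldsymbol{Id}-A)^{-1}-\boldsymbol{Id}$. Rearranging the nonnegative series and applying Tonelli yields the resolvent identities $\psi=\phi+\phi\star\psi=\phi+\psi\star\phi$; in particular $\psi$ is convolutive and independent of any finite horizon.

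For the inequality, the quickest route is to apply Lemma~\ref{Lemma:gripenberg} with $g\equiv 1$ and $\varphi:=\phi$ on each interval $[0,T]$ — Assumption~\ref{ass:stability} for this kernel is precisely $\rho(A)<1$ — whose resolvent $K^T=\sum_i\phi^{\star i}=\psi$ does not depend on $T$ (Remark~\ref{remark:convolution}); hence~\eqref{equ:renewal_solution} holds on every $[0,T]$, thus on all of $\R^+$. Alternatively, and self-contained: substituting the hypothesis into itself and using that $\phi\ge 0$ preserves the coordinate-wise order, one gets by induction, for each $n$,
\[
x(t) \;\le\; m(t) + \sum_{i=1}^n(\phi^{\star i}\star m)(t) + (\phi^{\star(n+1)}\star x)(t), \qquad t\ge 0 .
\]
The middle sum increases to $(\psi\star m)(t)$ since $m$ is bounded and $\sum_i\lVert A^i\rVert<\infty$, and the remainder obeys $\sup_{[0,t]}\lVert\phi^{\star(n+1)}\star x\rVert\le\big(\sup_{[0,t]}\lVert x\rVert\big)\lVert A^{n+1}\rVert\to 0$ because $\rho(A)<1$ (this uses that $x$ is locally bounded, which is automatic in every application, where $x$ is an expected count or intensity). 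Letting $n\to\infty$ gives~\eqref{equ:renewal_solution}. If equality holds in~\eqref{equ:renewal_equation}, every step above is an exact identity and the same limit gives equality in~\eqref{equ:renewal_solution}; equivalently, $y:=m+\psi\star m$ solves $y=m+\phi\star y$ by the resolvent identity and coincides with $x$ by the uniqueness of the global solution of the renewal equation (Remark~\ref{remark:renewal}).

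There is no deep difficulty here, but the one load-bearing point is the passage from compact-time estimates to a statement on all of $[0,\infty)$: it is exactly the spectral-radius hypothesis $\rho(A)<1$ that makes $\sum_i\phi^{\star i}$ converge in $L^1$ of the whole half-line and forces the iteration remainder $\phi^{\star(n+1)}\star x$ to vanish uniformly on compacts — neither of which would survive a merely locally integrable $\phi$ — while the translation invariance of $\phi$ is what keeps the resolvent independent of $T$, so no uniformity in the horizon enters. The one place to be slightly careful is to record (or import from Lemma~\ref{Lemma:gripenberg}, hence from Gripenberg~\cite{gripenberg}) the minimal measurability/local boundedness of $x$ needed for the convolutions to make sense and the remainder to be controlled.
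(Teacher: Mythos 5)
Your proof is correct, and there is nothing in the paper to compare it against: Lemma~\ref{Lemma:renewal} is imported by citation (Bacry \textit{et al.}, Lemma 3; Delattre \textit{et al.}, Lemmata 23--24) and never proved here. Your argument -- entry-wise nonnegativity giving $\int_0^\infty \phi^{\star i} = A^i$, the Neumann series under $\rho(A)<1$ to define $\psi$ in $L^1[0,\infty)$, and either the specialisation of Lemma~\ref{Lemma:gripenberg} to $g\equiv 1$ or the direct iteration with remainder $\phi^{\star(n+1)}\star x$ killed by $\lVert A^{n+1}\rVert\to 0$ -- is exactly the standard proof used in those references, so this is the same route the paper implicitly relies on. Two small remarks. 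First, the hypothesis~\eqref{equ:renewal_equation} as printed has $m(s)$ inside the integral, which must be a typo for $x(s)$ (with $m(s)$ the statement is vacuous, since $m\geq 0$ and $\psi\geq\phi$ coordinate-wise, and it is the $x(s)$ version that the paper actually invokes, e.g.\ to resolve~\eqref{equ:renewal_bound} and~\eqref{equ:some_more_renewal}); you correctly proved the intended version. Second, your caveat that $x$ must be locally bounded (or otherwise regular enough for $\phi^{\star(n+1)}\star x$ to make sense and vanish) is genuine -- the cited Lemma 3 of Bacry \textit{et al.} includes such an assumption, which is missing from the statement here but satisfied in every application in the paper -- so flagging it rather than glossing over it is the right call.
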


 Accordingly, for any $x \in [0,1]$, we define 
\begin{equation*}
    \Psi(x,\cdot) \colon t \in [0,\infty) \mapsto   \Psi(x,t) :=\sum_{i=1}^{\infty} g(x)^i \varphi^{\star i}(t) \in \mathcal{M}_p(\R)
\end{equation*} the resolvent of the convolutive kernel $(s,t) \mapsto g(x) \varphi(t-s)$, where $\star$ is the convolution product, and we have made the equivalent identification to a function over $[0,\infty)$ detailed in Remarks~\ref{remark:convolution} and~\ref{remark:renewal}.  We also introduce the bounding resolvent
\begin{equation}\label{equ:bound_resolvent}
    \bar{\Psi} \colon t \in [0,\infty) \mapsto \sum_{i=1}^{\infty} \bar{\varphi}^{\star i}(t)
    =
    \sum_{i=1}^{\infty} \lVert g \rVert_{L^{\infty}}^i\varphi^{\star i}(t)
    \in \mathcal{M}_p(\R)
\end{equation}

 associated with the bounding kernel $(s,t) \mapsto \lVert g \rVert_{L^{\infty}} \varphi(t-s)$ defined in~\eqref{equ:bound_function}. It is easy to see from the expression of $K^T$ that for any $T>0$  and $(s,t) \in [0,T]^2$, one has the coordinate-wise inequality $K^T(t,s) \leq \Bar{\Psi}(t-s)$. The resolvent $K^T(t,s)$ thus inherits the integrability of $\varphi$ in the same fashion $\bar{\Psi}$ does, in that $\int_s^T K^T(t,s) \dif t \leq \int_0^{\infty} \Bar{\Psi}(t) \dif t < \infty$ coordinate-wise for any $0<s<T$. As the solutions of Lemmata~\ref{Lemma:gripenberg} and~\ref{Lemma:renewal} depend only on the values of the kernels and resolvents at $t>0$, one may equivalently consider $\varphi$ and the $\Psi(x,\cdot)$ as functions of $\R$ with support in $[0,\infty)$ -- this is consistent with our definition of $K^T(t,s)$ and allows for helpful simplifications in the notation. In the sequel, we will continuously resort to such extension without further comment. \\
 
 In all generality, the differentiability of $K^T(t,s)$ will also depend on the differentiability of $\varphi$. This is with the notable exception of the directional derivative along $(1,1)$ which depends on $g'$ only. This apparently anecdotal property will in fact repeatedly intervene in our proofs, as some helpful integration by parts formulas follow from Lemma~\ref{Lemma:IPP_base}.

 \begin{Lemma}\label{Lemma:IPP_base}
     Suppose $g$ verifies Assumption~\ref{ass:g_and_mu_are_C0}. Then, for any $n \in \mathbb{N}$ and any $T \geq t\geq s \geq 0$,
     \begin{equation*}
         (\partial_s + \partial_t) (k^T)^{\star i}
         (t,s)
         \leq 
         \frac{i}{T} \lVert g' \rVert_{L^\infty}
          \lVert g \rVert_{L^\infty}^{i-1} 
          \varphi^{\star i}(t-s).
     \end{equation*}
     In addition, $K^T(t,s)$ is then differentiable in the same direction and, for any $T \geq t\geq s \geq 0$,
     \begin{equation*}
         (\partial_s + \partial_t) 
         K^T(t,s)
         \leq 
         \frac{1}{T} \tilde{\Psi}(t-s),
     \end{equation*}
     where $\tilde{\Psi}(t) = \sum_{i=0}^{\infty} i \lVert g' \rVert_{L^\infty}
          \lVert g \rVert_{L^\infty}^{i-1} 
          \varphi^{\star k}(t)$ is well-defined in $L^1[0,\infty)$.
 \end{Lemma}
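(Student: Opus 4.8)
The plan is to prove the bound on the directional derivative of $(k^T)^{\star i}$ by induction on $i$, the base case $i=1$ being a direct differentiation, and then sum over $i$ to obtain the statement for $K^T$. First I would record the key structural fact: if we write $k^T(t,s) = g(t/T)\varphi(t-s)$, then the $\varphi(t-s)$ factor is annihilated by $\partial_s+\partial_t$, so that $(\partial_s+\partial_t)k^T(t,s) = \tfrac1T g'(t/T)\varphi(t-s)$, which is bounded coordinate-wise by $\tfrac1T\lVert g'\rVert_{L^\infty}\varphi(t-s)$. This settles $i=1$.

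For the inductive step, I would differentiate the identity $(k^T)^{\star(i+1)}(t,s) = \int_s^t k^T(t,u)\,(k^T)^{\star i}(u,s)\,\dif u$ using the change of variables $u = s + (t-s)v$, $v\in[0,1]$, which turns the $\star$-product into $\int_0^1 k^T(t, s+(t-s)v)\,(k^T)^{\star i}(s+(t-s)v, s)\,(t-s)\,\dif v$. In these coordinates the operator $\partial_s+\partial_t$ acts on the integrand by the chain rule, and crucially the map $(t,s)\mapsto(s+(t-s)v, s)$ and $(t,s)\mapsto(t, s+(t-s)v)$ each have the property that $\partial_s+\partial_t$ in the source corresponds to $\partial_{t'}+\partial_{s'}$ in the target (both inner coordinates shift by the same amount when $t$ and $s$ both shift by the same amount). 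Hence, after differentiating under the integral sign,
\begin{equation*}
    (\partial_s+\partial_t)(k^T)^{\star(i+1)}(t,s)
    =
    \big[(\partial_s+\partial_t)k^T\big]\star (k^T)^{\star i}(t,s)
    +
    k^T \star \big[(\partial_s+\partial_t)(k^T)^{\star i}\big](t,s),
\end{equation*}
i.e. the directional derivative obeys the Leibniz rule for $\star$. Applying the base case to the first term, the inductive hypothesis to the second, and the coordinate-wise domination $k^T(t,s)\le\lVert g\rVert_{L^\infty}\varphi(t-s)$ together with $\varphi^{\star 1}\star\varphi^{\star i} = \varphi^{\star(i+1)}$ yields
\begin{equation*}
    (\partial_s+\partial_t)(k^T)^{\star(i+1)}(t,s)
    \le
    \tfrac1T\lVert g'\rVert_{L^\infty}\lVert g\rVert_{L^\infty}^{i}\varphi^{\star(i+1)}(t-s)
    +
    \tfrac{i}{T}\lVert g'\rVert_{L^\infty}\lVert g\rVert_{L^\infty}^{i}\varphi^{\star(i+1)}(t-s),
\end{equation*}
which is the claimed bound with $i$ replaced by $i+1$.

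For the statement on $K^T$, I would sum the per-term bounds: term-by-term differentiation of $K^T=\sum_i (k^T)^{\star i}$ is justified because the series $\sum_i \tfrac{i}{T}\lVert g'\rVert_{L^\infty}\lVert g\rVert_{L^\infty}^{i-1}\varphi^{\star i}$ of derivative-bounds converges in $L^1[0,\infty)$: indeed $\int_0^\infty\varphi^{\star i}(t)\,\dif t = \big(\int_0^\infty\varphi(t)\,\dif t\big)^i$ coordinate-wise and, by Assumption~\ref{ass:stability}, $\lVert g\rVert_{L^\infty}\int_0^\infty\varphi$ has spectral radius strictly below $1$, so a Gelfand-formula / submultiplicativity argument gives $\sum_i i\,r^{i-1}\|(\int\varphi)^i\| < \infty$ for the relevant $r=\lVert g\rVert_{L^\infty}$ below the critical value; this is exactly the well-definedness of $\tilde\Psi$ asserted in the statement. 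Dominated convergence then lets one pass $\partial_s+\partial_t$ inside the sum, giving $(\partial_s+\partial_t)K^T(t,s)\le\tfrac1T\tilde\Psi(t-s)$.

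The main obstacle I anticipate is making the differentiation-under-the-integral-sign and the Leibniz rule for $\star$ fully rigorous under only the hypothesis that $g\in C^1$ and $\varphi\in L^1$ (so $\varphi$ itself need not be differentiable or even continuous): one must be careful that it is precisely the $(1,1)$-direction that survives, since $\partial_s$ or $\partial_t$ alone would hit $\varphi$ and produce nothing controllable. The change of variables $u=s+(t-s)v$ is the clean way to handle this, as it moves all the $t,s$-dependence coming from $\varphi$ into arguments of the form $\varphi(\text{(difference of the two endpoints)}\cdot v)$ or $\varphi(\text{difference}\cdot(1-v))$ whose dependence on $(t,s)$ is only through $t-s$, hence annihilated by $\partial_s+\partial_t$; the remaining $(t,s)$-dependence sits entirely in the smooth factors $g(\cdot/T)$, so the chain rule applies and the inductive bookkeeping goes through. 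Monotone/dominated convergence for the final interchange of derivative and infinite sum is then routine given the $L^1$ bound on $\tilde\Psi$.
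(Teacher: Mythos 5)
Your proposal is correct and follows essentially the same route as the paper: induction on the number of $\star$-iterates, with the base case exploiting that $\varphi(t-s)$ is invariant along the diagonal, a Leibniz-type identity for $(\partial_s+\partial_t)$ applied to the $\star$-product (your affine reparametrization $u=s+(t-s)v$ plays exactly the role of the paper's change of variable $u\mapsto u+h$ in the difference quotient), the domination $k^T\leq\lVert g\rVert_{L^\infty}\varphi$, and finally term-by-term summation justified by the $L^1$ convergence of $\tilde\Psi$ via the spectral radius condition. No gap to report.
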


 \begin{proof}
     The proof proceeds by recursion. At $n=1$, for any $t \geq s$ and any $h >0$, following some immediate simplification,
     \begin{equation*}
         \frac{1}{h} \{ k^T(t+h,s+h) - k^T(t,s) \}
         =
         \frac{1}{h} \{
            g( \frac{t+h}{T})
            -
            g(\frac{t}{T}) \}
            \varphi(t-s),
     \end{equation*}
     and the property appears naturally as $h \to 0$. Let $n \geq 1$, and suppose the property holds up to $n$. Then, following the change of variable $u \mapsto u+h$, for any $T \geq t \geq s \geq 0$ 
     \begin{align*}
         \frac{1}{h}
         \{ (k^T)^{\star (n+1)}(t+h,s+h)
         &-
         (k^T)^{\star (n+1)}(t,s)
         \}
         =
         \\
         &\int_s^t
         \frac{1}{h}
         \{
         g(\frac{t+h}{T})-g(\frac{t}{T})
         \}
         \varphi(t-u) (k^T)^{\star n}(u+h,s+h) \dif u
         \\
         +
         &\int_s^t
         g(\frac{t}{T})
         \varphi(t-u) 
         \frac{1}{h}
         \{
         (k^T)^{\star n}(u+h,s+h)
         -
         (k^T)^{\star n}(u,s)
         \}
         \dif u.
     \end{align*}
     Letting $h \to 0$ this yields
     \begin{equation*}
         (\partial_t + \partial_s)  (k^T)^{\star(n+1)}
         (t,s) 
         =
         \frac{1}{T}g'(\frac{t}{T}) 
         \int_s^t \varphi(t-u) k^{\star n}(u,s) \dif u
         +
         \int_s^t 
         g(\frac{t}{T})
         \varphi(t-u)
          (\partial_t + \partial_s) 
          (k^T)^{\star n}(u,s) \dif u.
     \end{equation*}
     Using that $k^T(t,s) \leq \bar{\varphi}(t-s)=\lVert g \rVert_{L^{\infty}} \varphi(t-s)$ and our recursion hypothesis, this is less than
     \begin{equation*}
        \frac{1}{T}
         \lVert g' \rVert_{L^{\infty}}
         \lVert g \rVert_{L^{\infty}}^{n}
         \varphi^{\star (n+1)}(t-s)
         +
         \frac{1}{T}
         n \lVert g' \rVert_{L^{\infty}}
         \lVert g \rVert_{L^{\infty}}^n
         \varphi^{\star (n+1)}(t-s),
     \end{equation*}
     which is the required bound. If $\lVert g \rVert_{L^{\infty}}=0$, there is nothing else to say. Otherwise, for any $n \in \mathbb{N}^{\star}$,
     \begin{align*}
         \int_0^{\infty}  \lVert g' \rVert_{L^{\infty}}
         \lVert g \rVert_{L^{\infty}}^n
         \varphi^{\star (n+1)}(t)
         \dif t
         =
         \int_0^{\infty}  \frac{\lVert g' \rVert_{L^{\infty}}}{\lVert g \rVert_{L^{\infty}}}
         \lVert g \rVert_{L^{\infty}}^{n+1}
         \varphi^{\star (n+1)}(t)
         \dif t
         =
         \frac{\lVert g' \rVert_{L^{\infty}}}{\lVert g \rVert_{L^{\infty}}}
         \Big( \int_0^{\infty} \bar{\varphi} (t) \dif t \Big)^{n+1},
     \end{align*}
     and $\sum_{i=1}^{n} i \lVert g' \rVert_{L^\infty}
          \lVert g \rVert_{L^\infty}^{i-1} 
          \varphi^{\star i}(t)$ converges in  $L^1[0,\infty)$ for the same reasons as $\bar{\Psi}$.
 \end{proof}

 The behaviour of the Volterra equation~\eqref{equ:our_volterra_equation} is now clearer and one may see that some of the key properties of the convolutive case~\eqref{equ:renewal_equation} extend to our problem. However, equation~\eqref{equ:our_volterra_equation} differs fundamentally from~\eqref{equ:renewal_equation} in that the solutions of the latter are globally defined over $[0,\infty)$, whereas the resolvent of the former is in a local nature and depends on $T$.\\
 
 As a consequence, asymptotic estimates do not come as easily. Letting $(s,x) \in [0,T] \times [0,1]$ for instance, $\int_s^T K^T(t,s) \dif t$ features no apparent limit as $T \to \infty$ contrarily to $\int_s^T \Psi(x,t-s) \dif t$. In spite of these difficulties, we are still able to establish the convergence of $K^T(t,s)$ in a Cesàro sense. 

\begin{proposition}\label{prop:deterministic_convergence}
    Let $m \colon [0,1] \mapsto \R^p$ be a continuous function. Under Assumptions~\ref{ass:g_and_mu_are_C0} to~\ref{ass:stability}, 
    \begin{equation*}
        \sup_{u \in [0,1]}
        \Big\lVert
        \frac{1}{T}
        \int_0^{Tu}
            \int_s^{Tu} K^T(t,s)
            m\big( \frac{s}{T} \big)
            \dif t
        \dif s
        -
        \int_0^u \Big( \int_0^{\infty} \Psi(x,s) \dif s \Big) m(x)  \dif x\Big\rVert
        \to
        0
    \hspace{0.1cm}
    \textup{ as }
    \hspace{0.1cm}
    T \to \infty .
    \end{equation*}
    If furthermore $m$ is Lipschitz-continuous and Assumption~\ref{ass:sqrt_integ} holds, the convergence occurs at rate $o(\frac{1}{\sqrt{T}})$.
\end{proposition}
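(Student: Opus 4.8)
The plan is to adopt the grid-refinement strategy of Kwan, Chen \& Dunsmuir: partition $[0,Tu]$ into blocks on which the non-convolutive kernel $K^T(t,s)$ is uniformly close to a frozen convolutive resolvent $\Psi(x,t-s)$, estimate the error block by block, and let the mesh refine at rate $o(T)$. Concretely, fix a large integer $J=J(T)$ with $J\to\infty$ and $T/J\to\infty$, and set grid points $x_j = j/J$, so that the time-intervals $[T x_{j-1}, T x_j)$ have length $T/J = o(T)$. On the $j$-th block, $g(t/T)$ varies by at most $\lVert g'\rVert_{L^\infty}/J$ by Assumption~\ref{ass:g_and_mu_are_C0}, so $k^T(t,s)$ is within $O(1/J)$ (coordinate-wise, after weighting by $\varphi(t-s)$) of the frozen kernel $(t,s)\mapsto g(x_j)\varphi(t-s)$; the directional-derivative bound of Lemma~\ref{Lemma:IPP_base} makes this quantitative, giving $\lVert (\partial_s+\partial_t) K^T(t,s)\rVert \le \tfrac1T \tilde\Psi(t-s)$, which upon integration along the diagonal controls the discrepancy between $K^T$ and the frozen resolvents up to an error that is $O(1/J)$ in $L^1$ once summed against $m$.

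The key steps, in order: (i) Write the target double integral as $\frac1T\int_0^{Tu} F^T(s)\,\dif s$ with $F^T(s) = \int_s^{Tu} K^T(t,s) m(s/T)\,\dif t$, and note $\int_s^{Tu} K^T(t,s)\,\dif t \le \int_0^\infty \bar\Psi(r)\,\dif r<\infty$ uniformly, so Fubini/tail truncation is licit. (ii) Decompose $s$ over the blocks $[Tx_{j-1},Tx_j)$; on each block replace $m(s/T)$ by $m(x_j)$ (continuity, uniform on a compact, error $o(1)$ uniformly in $j$) and replace $\int_s^{Tu}K^T(t,s)\,\dif t$ by $\int_0^\infty \Psi(x_j,r)\,\dif r$. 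The latter replacement splits into two pieces: truncating the upper limit $Tu$ to $+\infty$ (controlled by the integrable tail of $\bar\Psi$, uniformly in $s$ once $s$ is at distance $\gtrsim$ some slowly growing cutoff from $Tu$ — the boundary block contributes $O(1/J)$ of the total mass and is discarded), and freezing $K^T(t,s)\rightsquigarrow \Psi(x_j,t-s)$, where the diagonal-derivative estimate from Lemma~\ref{Lemma:IPP_base} together with $g\in C^1$ gives an $L^1(\dif t)$-error of order $(T x_j - s)\cdot \tfrac1T\sup\lVert\tilde\Psi\rVert \le \tfrac1J\lVert\tilde\Psi\rVert_{L^1}$ on the block. (iii) After these replacements the expression becomes $\sum_j \tfrac{1}{J}\big(\int_0^\infty\Psi(x_j,r)\dif r\big) m(x_j)\,\mathbb{1}_{x_j\le u}$ — a Riemann sum for $\int_0^u(\int_0^\infty\Psi(x,r)\dif r)m(x)\dif x$ — and one invokes uniform continuity of $x\mapsto \int_0^\infty\Psi(x,r)\dif r$ (which follows since $\Psi(x,\cdot)=\sum_i g(x)^i\varphi^{\star i}$ is dominated by $\bar\Psi$ and $g$ is continuous, so the series of integrals $\sum_i g(x)^i(\int\varphi)^{\star i}$ converges uniformly) to get convergence of the Riemann sum, uniformly in $u\in[0,1]$ since the partial sums over $j$ are equicontinuous in $u$. (iv) Optimise $J=J(T)$: the total error is $O(1/J) + O(\omega_m(1/J)) + O(J/T)$ where $\omega_m$ is the modulus of continuity of $m$; choosing e.g. $J\asymp\sqrt{T}$ balances $O(1/J)$ against $O(J/T)$ and, when $m$ is Lipschitz so $\omega_m(1/J)=O(1/J)$ and Assumption~\ref{ass:sqrt_integ} upgrades the tail-truncation error to $o(1/\sqrt T)$ (the tail $\int_R^\infty \bar\Psi \le R^{-1/2}\int_0^\infty\sqrt r\,\bar\Psi(r)\dif r$, with $R$ a slowly growing cutoff), yields the claimed $o(1/\sqrt T)$ rate.

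The main obstacle I anticipate is making the freezing step uniform in $s$ simultaneously with the truncation of the outer time-limit $Tu$: near the right endpoint $s\approx Tu$ the inner integral $\int_s^{Tu}K^T(t,s)\dif t$ is \emph{not} close to $\int_0^\infty\Psi(x,r)\dif r$, so one must argue that the measure of such $s$ (relative to $T$) is $O(1/J)\to 0$ and that the integrand there is bounded by the fixed constant $\int_0^\infty\bar\Psi(r)\dif r$, hence its contribution is negligible; keeping all of these error terms uniform in $u$ — rather than for a fixed $u$ — is the delicate bookkeeping, but it goes through because every bound above is either a supremum over $u$ already or an equicontinuous Riemann-sum remainder. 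A secondary technical point is justifying term-by-term integration and the uniform convergence of $\sum_i g(x)^i\varphi^{\star i}$, which is immediate from the coordinate-wise domination by $\bar\Psi$ established before the proposition and the spectral-radius bound in Assumption~\ref{ass:stability}.
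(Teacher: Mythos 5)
Your overall architecture (blocks of length $T/J=o(T)$ with $J\to\infty$, frozen convolutive resolvents on each block, a Riemann sum in the frozen variable, Cesàro-type handling of the boundary, uniformity in $u$) is the same as the paper's, but the central freezing step is not actually proved by the tool you invoke. Lemma~\ref{Lemma:IPP_base} bounds the derivative of $K^T$ along the direction $(1,1)$ only; integrating it along the diagonal compares $K^T(t+h,s+h)$ with $K^T(t,s)$ at the \emph{same} lag $t-s$, i.e.\ it shows $K^T$ is approximately translation-invariant over a block, but it never identifies the local limit as the frozen resolvent $\Psi(x_j,\cdot)$. Nor does block-wise closeness of the kernels $k^T(t,s)\approx g(x_j)\varphi(t-s)$ transfer by itself to the resolvents: $K^T$ is an infinite series in $k^T$ and receives contributions from lags $t-s$ far exceeding the block length. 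The missing mechanism is a perturbation argument at the level of the resolvent equations: subtract the equation $K^T=k^T+k^T\star K^T$ from the equation satisfied by $\Psi(x_j,\cdot)$, observe that the difference obeys a renewal inequality with kernel $\bar\varphi$ and forcing controlled by the oscillation $\omega(g,\Delta_T/T)$ times integrable functions, and resolve it with $\bar\Psi$ (this is Lemma~\ref{lemma:deterministic_approximation} in the paper, yielding the uniform bound $C_{g,\varphi}\,\omega(g,\Delta_T/T)$ after integration in $t$). Your quantitative claim that the freezing error on a block is ``$(Tx_j-s)\cdot\tfrac1T\sup\lVert\tilde\Psi\rVert\le\tfrac1J\lVert\tilde\Psi\rVert_{L^1}$'' also does not stand as written, since $Tx_j-s$ is of order $T$ whenever $s$ lies in a much earlier block.

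The rate claim has a second, independent gap. Your error ledger $O(1/J)+O(\omega_m(1/J))+O(J/T)$ optimised at $J\asymp\sqrt T$ yields $O(1/\sqrt T)$, which is strictly weaker than the stated $o(1/\sqrt T)$. To obtain the little-$o$ one must take the mesh $\Delta_T=T/J=o(\sqrt T)$, so that the freezing and $m$-replacement errors are $o(1/\sqrt T)$ by Lipschitz continuity of $g$ and $m$, and then deal with the tail terms that your ledger omits or misbooks as $O(J/T)$: the truncation errors are of the form $\int_{c\Delta_T}^{\infty}\lVert\bar\Psi(s)\rVert\,\dif s$, which under Assumption~\ref{ass:sqrt_integ} is only $O(\Delta_T^{-1/2})$. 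Showing these contribute $o(1/\sqrt T)$ requires the finer estimates the paper uses, namely $\tfrac{1}{\sqrt T}\int_0^T s\lVert\bar\Psi(s)\rVert\,\dif s\to0$ and a telescoping bound of the type $\tfrac{1}{\sqrt T}\sum_i\bigl(\sqrt{(i+1)\Delta_T}-\sqrt{i\Delta_T}\bigr)\int_{i\Delta_T}^{\infty}\sqrt s\,\lVert\bar\Psi(s)\rVert\,\dif s\to0$, combined with a Cesàro split in $u$. In short, the qualitative half of your plan is sound once the freezing step is replaced by the renewal-inequality comparison, but the rate half needs to be redone along these lines.
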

The preceding technical result is seemingly new in the theory of Volterra integral equations, and perhaps of independent interest. The next sub-section is dedicated to its proof.

\subsection{Proof of Proposition~\ref{prop:deterministic_convergence}}
Our proof relies on an approximation scheme reminiscent of the broad principle behind the strategy of Kwan \textit{et al.}~\cite{Kwan1}~\cite{Kwan2}. Namely, we intend to approximate the Volterra resolvent $(s,t) \mapsto K^T(t,s)$ by local convolutive counterparts on sub-intervals of $[0,Tu]$, which lengths are arbitrarily large in absolute terms while arbitrarily small relatively to $T$.  Accordingly let $\{ \Delta_T \}$ be a collection of real numbers such that $\Delta_T \to \infty$ and $\Delta_T = o(T)$ as $T \to \infty$. Without loss of generality, we ask furthermore that $\frac{T}{\Delta_T} \in \mathbb{N}^{\star}$ so as to avoid adressing inessential edge cases.  One thus obtains a partition of $[0,Tu]$.

\begin{Lemma}\label{lemma:deterministic_approximation}
Under Assumptions~\ref{ass:g_and_mu_are_C0} and~\ref{ass:stability}, there is some constant $C_{g,\varphi}$ depending only on $\varphi$ and $g$ such that, for any $T>0$, $s \in [0,T]$ and $u \in [0,1]$,
    \begin{equation*}
    \Big\lVert 
    \int_{s}^{Tu}
        K^T(t,s) 
    \dif t
    -
    \sum_{i=0}^{\nicefrac{T}{\Delta_T}-1}
        \int_{ i\Delta_T u}^{(i+1) \Delta_T u}
         \Psi(i u \frac{ \Delta_T }{T},t-s)  \dif t
    \Big\rVert
    \leq
    C_{g,\varphi}
    \omega(g,\frac{\Delta_T}{T})
        \end{equation*}
\end{Lemma}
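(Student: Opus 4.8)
The statement compares the (local, non-convolutive) resolvent integral $\int_s^{Tu} K^T(t,s)\,dt$ with a piecewise-convolutive approximation: on each block $[i\Delta_T u,(i+1)\Delta_T u]$ we freeze the time-modulation $g$ at its value $g(iu\Delta_T/T)$, replacing $K^T$ by the honest convolution resolvent $\Psi(iu\Delta_T/T,\cdot)$. So the goal is to show the two agree up to $C_{g,\varphi}\,\omega(g,\Delta_T/T)$, where $\omega(g,\cdot)$ is the modulus of continuity of $g$.

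Let me sketch the approach. Write $\boldsymbol{Id}$ for the identity and think at the level of the defining series $K^T=\sum_{i\ge1}(k^T)^{\star i}$, $\Psi(x,\cdot)=\sum_{i\ge1}g(x)^i\varphi^{\star i}$. The plan is to prove the bound term-by-term in the Neumann series: control $\big\|\int (k^T)^{\star i}(t,s)\,dt - (\text{frozen }i\text{-fold convolution})\big\|$ by something of the form $i\,\omega(g,\Delta_T/T)\,\|g\|_{L^\infty}^{i-1}\|\varphi^{\star i}\|_{L^1}$, then sum over $i$. Since $\|\varphi^{\star i}\|_{L^1}\|g\|_{L^\infty}^{i-1}=\|g\|_{L^\infty}^{-1}(\int\bar\varphi)^i$ and $\sum_i i\rho^i<\infty$ for $\rho=\int\bar\varphi<1$ by Assumption~\ref{ass:stability}, the geometric sum converges and produces the constant $C_{g,\varphi}$. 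The key per-term estimate should follow by induction on $i$, exactly mirroring the recursion in Lemma~\ref{Lemma:IPP_base}: the directional derivative $(\partial_s+\partial_t)(k^T)^{\star i}$ is $O(i/T)$, so along a block of length $O(\Delta_T)$ the variation of $(k^T)^{\star i}$ away from its ``frozen'' convolutive value is $O(i\Delta_T/T)$ — and more precisely $O(i\,\omega(g,\Delta_T/T))$ once one tracks that only the modulation argument $t/T$ moves within a block, which ranges over an interval of length $\Delta_T/T$ on the $x$-axis of $g$. There is an additional subtlety: within the convolution $(k^T)^{\star i}(t,s)=\int\cdots g(t/T)\varphi(t-u_1)g(u_1/T)\cdots$, the \emph{intermediate} modulation arguments $u_j/T$ need not lie in the same block as $t/T$ — but all of $s,u_1,\dots,u_{i-1},t$ lie in $[i\Delta_T u,(i+1)\Delta_T u)$ when $s$ and $t$ do, since $s\le u_j\le t$, so they are within $\Delta_T/T$ of $iu\Delta_T/T$ and each $g(u_j/T)$ differs from $g(iu\Delta_T/T)$ by at most $\omega(g,\Delta_T/T)$. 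A telescoping argument replacing the $i$ factors $g(\cdot/T)$ one at a time by $g(iu\Delta_T/T)$ gives the factor $i$.

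There is a boundary issue to handle: a summand $\int_s^{Tu}(k^T)^{\star i}(t,s)\,dt$ mixes contributions from the block containing $s$ and all later blocks, whereas the approximant integrates the frozen resolvent only over blocks $[i\Delta_T u,(i+1)\Delta_T u)$ lying to the right of $0$; because $k^T(t,s)$ and $\Psi(x,t-s)$ both vanish (are supported in $t\ge s$), only blocks meeting $[s,Tu]$ contribute on each side, and on the block containing $s$ the portion $[i\Delta_T u, s)$ contributes nothing. So the comparison decomposes cleanly block-by-block, and on each block one compares $\int_{I}(k^T)^{\star i}(t,s)\,dt$ with $\int_I (\text{frozen})(t-s)\,dt$ using the per-term estimate above; then one sums the $O(T/\Delta_T)$ blocks, but — crucially — the block-wise errors do not simply add up to $O(T/\Delta_T)\cdot O(\Delta_T/T)$ in a way that blows up, because $\int_s^{Tu}\bar\Psi(t-s)\,dt\le\int_0^\infty\bar\Psi<\infty$ uniformly: the total mass being integrated is finite, so the relative error $\omega(g,\Delta_T/T)$ is what survives, weighted by the finite total $L^1$ mass, yielding the clean bound $C_{g,\varphi}\,\omega(g,\Delta_T/T)$ with no $T$-dependence. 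The main obstacle I anticipate is bookkeeping this last point rigorously: organizing the block-wise sum so that the finite-$L^1$-mass of $\bar\Psi$ is what caps the accumulation of errors, rather than the number of blocks, while simultaneously carrying the inductive per-term bound with its explicit $i$-dependence through the $\star$-products. Everything else — the induction, the telescoping in $g$, the geometric summation — is routine given Lemma~\ref{Lemma:IPP_base} and Assumption~\ref{ass:stability}.
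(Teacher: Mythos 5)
Your high-level architecture (freeze $g$ block by block, compare with the convolutive resolvent, sum the errors against the finite $L^1$ mass of $\bar{\Psi}$) is in the spirit of the paper, but the key per-term estimate on which your whole plan rests is not justified, and this is a genuine gap rather than bookkeeping. In the Lemma, $s$ is a \emph{fixed, arbitrary} point of $[0,T]$ while $t$ sweeps through $[s,Tu]$, i.e.\ through \emph{all} blocks to the right of $s$, and the frozen argument $iu\Delta_T/T$ is attached to the block containing $t$. For $t$ lying in a block far from $s$, the intermediate variables $u_1,\dots,u_{i-1}$ in $(k^T)^{\star i}(t,s)$ range over the whole interval $[s,t]$, which is not contained in the block of $t$; your justification ("all of $s,u_1,\dots,u_{i-1},t$ lie in $[i\Delta_T u,(i+1)\Delta_T u)$ when $s$ and $t$ do") rests on the premise that $s$ and $t$ share a block, which is false in exactly the regime that matters. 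Off that diagonal block the telescoping only gives a factor $\omega\big(g,(t-s+\Delta_T)/T\big)$, not $\omega(g,\Delta_T/T)$, so after integrating in $t$ and summing the Neumann series you are left with a contribution of the type $\int_0^\infty \lVert\bar{\Psi}(w)\rVert\,\omega\big(g,(w+\Delta_T)/T\big)\,\dif w$, which splitting at $w=\Delta_T$ controls only by $\omega(g,2\Delta_T/T)\lVert\bar{\Psi}\rVert_{L^1}+2\lVert g\rVert_{L^\infty}\int_{\Delta_T}^\infty\lVert\bar{\Psi}(w)\rVert\,\dif w$: the finite total mass of $\bar{\Psi}$ alone does not compress the cross-block error into $C_{g,\varphi}\,\omega(g,\Delta_T/T)$. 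Handling the lags $t-s\gtrsim\Delta_T$, where the history of $g$ over $[s,t]$ has genuinely drifted away from the frozen value, is precisely the missing idea. A secondary point: Lemma~\ref{Lemma:IPP_base} controls the diagonal derivative $(\partial_s+\partial_t)(k^T)^{\star i}$, i.e.\ a simultaneous shift of both arguments; it is not the quantity that measures the effect of freezing the modulation at fixed $(t,s)$, so invoking it as the engine of the induction is not the right tool here (the telescoping in $g$ is, but with the caveat above).

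For comparison, the paper does not expand the Neumann series term by term. It subtracts the two resolvent equations to write, for $t$ in the $i$-th block, $K^T(t,s)-\Psi(iu\tfrac{\Delta_T}{T},t-s)$ as a forcing term $(g(\tfrac{t}{T})-g(iu\tfrac{\Delta_T}{T}))\big(\varphi+\varphi\star K^T(\cdot,s)\big)$ plus $g(iu\tfrac{\Delta_T}{T})\,\varphi$ convolved with the same difference, i.e.\ a renewal inequality with kernel $\bar{\varphi}$ and forcing bounded by $\omega(g,\tfrac{\Delta_T}{T})(\varphi+\varphi\star\bar{\Psi})(t-s)$; it then resolves this inequality (Lemma~\ref{Lemma:renewal}) into a pointwise envelope $\omega(g,\tfrac{\Delta_T}{T})\big(\bar{\bar{\Psi}}+\bar{\Psi}\star\bar{\bar{\Psi}}\big)(t-s)$, integrates over each block and sums, the constant $C_{g,\varphi}$ being the $L^1$ mass of that envelope. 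So the accumulation of errors is capped at the level of a single resolved renewal inequality for the full resolvent difference, not by summing per-term telescoping bounds; if you want to salvage your route you must either reproduce such a Gr\"onwall-type resolution or otherwise control the cross-block drift of $g$ along the whole history $[s,t]$, which your current write-up does not do.
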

\begin{proof}
    Let $s \in [0,T]$, $u \in [0,1]$. For any $t \in [s,T]$ and any $i \in \mathbb{N}$ such that $i u \Delta_T \leq t \leq (i+1) u\Delta_T$,
    \begin{align*} 
            K^T(t,s)
            -
            \Psi(i u \frac{\Delta_T}{T}, t-s) 
        &=
        \big( g( \frac{t}{T})  - g( i u  \frac{\Delta_T}{T})\big) \varphi(t-s)
        \\
        &+ 
        \big( g( \frac{t}{T})  - g( i u \frac{\Delta_T}{T})\big)  
        \int_s^t
        \varphi(t-u)
        K^T(u,s) \dif u 
        \\
        &+
        g(i u\frac{\Delta_T}{T})
            \int_s^t 
                \varphi(t-u)
                \Big(
                K^T(u,s)
                -
                \Psi(i \frac{\Delta_T}{T}, u-s)
                \Big)
            \dif u.
    \end{align*}
    Hence, using the fact that $K^T(t,s) \leq \Bar{\Psi}(t-s)$, one obtains the coordinate-wise inequality
    \begin{align}
            K^T(t,s)
            -
            \Psi(i u \frac{\Delta_T}{T}, t-s)
            &\leq 
            \omega(g,\frac{\Delta_T}{T})
            \big(  \varphi + \varphi \star \Bar{\Psi} \big) (t-s)
            \label{equ:renewal_bound}
            \\
            &+
            \int_s^t \Bar{\varphi}(t-u)
            \Big(
                K^T(u,s)
                -
                \Psi(i u \frac{\Delta_T}{T}, u-s)
                \Big)
            \dif u.
            \nonumber
    \end{align}
     Defining $\Bar{\Bar{\Psi}}= \varphi + \varphi \star \Bar{\Psi}$, we have obtained in ~\eqref{equ:renewal_bound} a renewal inequation with kernel $\Bar{\varphi}$. It resolves into the coordinate-wise inequality
    \begin{equation*}
        K^T(t,s) - \Psi(i u \Delta_T,t-s)
        \leq 
        \omega(g,\frac{\Delta_T}{T})
        \Big(
        \Bar{\Bar{\Psi}}
        +
        \Bar{\Psi}
        \star 
        \Bar{\Bar{\Psi}}
        \Big)
        (t-s).
    \end{equation*}
   Since $\rho( \int_0^{\infty} \Bar{\varphi}(t) \dif t)<1$, $\Bar{\Psi}$ and thus $\Bar{\Bar{\Psi}}$ are integrable (see for instance Gripenberg~\cite[Theorem 4.3 in Chapter 2]{gripenberg}) . Integrating over $t$ in each $A^k_{T,u}$ and summing back the resulting bounds yields the desired inequality with the constant
    \begin{equation*}
        C_{g,\varphi}
        =
        \Big\lVert 
        \int_0^{\infty}
            \Bar{\Bar{\Psi}}(t)
            +
            \Bar{\Psi}
            \star
            \Bar{\Bar{\Psi}}
            (t)
        \dif t
        \Big\rVert_1.
    \end{equation*}
\end{proof}

Note that as $K^T(t,s)=\Psi(x,t-s)=0$ when $s>t$, the inequality~\eqref{equ:renewal_bound} is trivial if $i \Delta_T u + \Delta_T u < s$ and we need not discuss the position of the $A_i^{T,u}$ relatively to $s$ in the statement or in the proof of Lemma~\ref{lemma:deterministic_approximation}. A direct adaptation of the proof of Lemma~\ref{lemma:deterministic_approximation} yields the useful continuity result below.

\begin{Lemma}\label{Lemma:gamma_continuity}
    The family of functions $(x \in [0,1] \mapsto \int_s^{\infty} \Psi(x,t) \dif t )_{s \geq 0}$ is uniformly equi-continuous in $x$.
\end{Lemma}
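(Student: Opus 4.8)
The plan is to reduce the claim to a single renewal-type estimate that is uniform in the base point $s$, exactly mirroring the structure of the proof of Lemma~\ref{lemma:deterministic_approximation}. Fix $x, y \in [0,1]$ and $s \ge 0$. Both $\Psi(x,\cdot)$ and $\Psi(y,\cdot)$ are the convolutive resolvents of the kernels $g(x)\varphi$ and $g(y)\varphi$, so their difference satisfies a renewal identity obtained by expanding one resolvent equation and substituting the other: writing $\delta(t) := \Psi(x,t-s) - \Psi(y,t-s)$ for $t \ge s$ (and $\delta(t)=0$ for $t<s$), one gets
\begin{align*}
    \delta(t)
    &=
    \big( g(x) - g(y) \big)\varphi(t-s)
    + \big( g(x) - g(y) \big)\int_s^t \varphi(t-r)\,\Psi(x,r-s)\,\dif r
    \\
    &\quad
    + g(y)\int_s^t \varphi(t-r)\,\delta(r)\,\dif r,
\end{align*}
which, bounding $g(x),g(y) \le \lVert g\rVert_{L^\infty}$ and $\Psi(x,\cdot) \le \bar\Psi$ coordinate-wise, yields a renewal inequation $\delta \le |g(x)-g(y)|\,\bar{\bar\Psi}(\cdot - s) + \bar\varphi \star \delta$ with the same $\bar{\bar\Psi} = \varphi + \varphi\star\bar\Psi$ as before. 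Since $\rho(\int_0^\infty \bar\varphi) < 1$, Lemma~\ref{Lemma:renewal} resolves this into
\begin{equation*}
    \delta(t) \le |g(x)-g(y)|\,\big(\bar{\bar\Psi} + \bar\Psi \star \bar{\bar\Psi}\big)(t-s)
\end{equation*}
coordinate-wise.

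Next I would integrate this bound over $t \in [s,\infty)$. By the substitution $t \mapsto t - s$ and the integrability of $\bar{\bar\Psi} + \bar\Psi\star\bar{\bar\Psi}$ established in the proof of Lemma~\ref{lemma:deterministic_approximation}, this gives
\begin{equation*}
    \Big\lVert \int_s^\infty \Psi(x,t)\,\dif t - \int_s^\infty \Psi(y,t)\,\dif t \Big\rVert
    \le C_{g,\varphi}\,|g(x)-g(y)|,
\end{equation*}
with $C_{g,\varphi}$ the very constant appearing in Lemma~\ref{lemma:deterministic_approximation} and independent of $s$. Since $g$ is continuous on the compact $[0,1]$ (Assumption~\ref{ass:g_and_mu_are_C0}), it is uniformly continuous, so $|g(x)-g(y)| \le \omega(g,|x-y|) \to 0$ as $|x-y|\to 0$ uniformly; composing, the modulus of continuity of each member of the family is dominated by $C_{g,\varphi}\,\omega(g,|x-y|)$, a bound that does not depend on $s$. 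This is precisely uniform equicontinuity in $x$.

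The main obstacle — really the only non-routine point — is obtaining the renewal \emph{identity} for $\delta$ with the correct "cross term" structure: one must be careful that the resolvent equation $\Psi(x,\cdot) = g(x)\varphi + g(x)\varphi \star \Psi(x,\cdot)$ is used for one argument and the analogous equation for the other, so that the discrepancy $g(x)-g(y)$ is factored out cleanly and the residual convolution carries the bounded kernel $\bar\varphi$. Once this algebraic manipulation is in place, everything else is a verbatim reuse of the integrability facts and the renewal resolution already proved for Lemma~\ref{lemma:deterministic_approximation}, which is why the statement is advertised as "a direct adaptation." I would also note in passing that a completely analogous argument shows continuity of $x \mapsto \int_s^\infty \Psi(x,t)\,\dif t$ jointly with the finiteness needed for $\boldsymbol{K}(x)$, but that is not required here.
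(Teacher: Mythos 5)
Your proposal is correct and is essentially the paper's own proof: the same renewal inequality for $\Psi(x,\cdot)-\Psi(y,\cdot)$ with kernel $\bar{\varphi}$ and forcing term $(\varphi+\varphi\star\bar{\Psi})$ scaled by the increment of $g$, resolved by the same renewal lemma, integrated against the same integrable majorant to get a bound uniform in $s$, and concluded via the uniform continuity of $g$ on $[0,1]$. The only blemish is the shift by $s$ in your definition of $\delta$, which is superfluous (taken literally, integrating $\delta$ over $[s,\infty)$ gives $\int_0^\infty(\Psi(x,u)-\Psi(y,u))\,\dif u$ rather than the target $\int_s^\infty$), but since your pointwise majorant does not depend on $s$ the stated uniform-in-$s$ estimate follows exactly as in the paper.
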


\begin{proof}
    Re-arranging the terms and using that $\Psi(x,\cdot) \leq \Bar{\Psi}$ and $g(y) \varphi \leq \Bar{\varphi}$, one has for any $s>0$, $t>0$, and any $x,y \in [0,1]$,
    \begin{equation*}
        \Psi(x,t)
        -
        \Psi(y,t)
        \leq
        \omega(g,\lvert x-y\rvert)
        \big(
            \varphi
            +
            \varphi \star \Bar{\Psi}
        \big)(t)
        +
        \int_0^t \Bar{\varphi}(t-u) 
        ( \Psi(x,u)
        -
        \Psi(y,u)
        )
        \dif u,
    \end{equation*}
    where the renewal inequality above holds coordinate-wise. The same arguments as for the proof of Lemma~\ref{lemma:deterministic_approximation} then show
    \begin{equation*}
        \int_s^{\infty} \lVert \Psi(x,t)
        -
        \Psi(y,t)
        \rVert_1 \dif t
        \leq 
        \omega(g,\varepsilon)
        \int_0^{\infty} 
        \lVert 
            \varphi(t)  
            +
            2 \varphi \star \Bar{\Psi}
            +
            \varphi \star
            \Bar{\Psi}^{\star 2}
        \rVert_1 
        \dif t,
    \end{equation*}
    and the Lemma deduces from the (uniform) continuity of $g$ over the compact $[0,1]$.
\end{proof}

\begin{Lemma}\label{Lemma:second_deterministic_approximation}
    Under Assumptions~\ref{ass:g_and_mu_are_C0} and~\ref{ass:stability},
    \begin{equation*}
    \frac{1}{T}
    \rVert 
        \int_0^{Tu}
            \int_0^t 
                K^T(t,s) 
                m \big( \frac{s}{T} \big) 
            \dif s
        \dif t
        -
        \int_0^{Tu} 
            \sum_{i=0}^{\nicefrac{T}{\Delta_T}-1}
            \int
            _{i \Delta_T u}
            ^{(i+1) \Delta_Tu}
                \Psi(i u \frac{\Delta_T}{T},t-s) 
                m\big( \frac{s}{T} \big) 
            \dif t
        \dif s 
    \rVert_1
    \to
    0
    \end{equation*}
    as $T \to \infty$, uniformly in $u \in [0,1]$.
\end{Lemma}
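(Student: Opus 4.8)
The idea is to integrate the pointwise bound from Lemma~\ref{lemma:deterministic_approximation} over $s$ against $m(s/T)$ and turn the Cesàro-type average into a vanishing error term. First I would fix $u \in [0,1]$ and observe that the double integral in the statement is just $\int_0^{Tu}\big(\int_s^{Tu}\cdots\big)m(s/T)\dif s$ after swapping the order of integration (justified since all integrands are nonnegative, by the coordinate-wise domination $K^T(t,s)\le \bar\Psi(t-s)$ and integrability of $\bar\Psi$ from Assumption~\ref{ass:stability}). So the quantity to bound is
\begin{equation*}
\frac{1}{T}\Big\lVert \int_0^{Tu}\Big( \int_s^{Tu} K^T(t,s)\dif t - \sum_{i=0}^{T/\Delta_T-1}\int_{i\Delta_T u}^{(i+1)\Delta_T u}\Psi\big(iu\tfrac{\Delta_T}{T},t-s\big)\dif t\Big) m\big(\tfrac{s}{T}\big)\dif s\Big\rVert_1.
\end{equation*}
By the triangle inequality and boundedness of $m$ (continuous on the compact $[0,1]$, say $\lVert m\rVert_{L^\infty}=C_m$), this is at most $\frac{C_m}{T}\int_0^{Tu}\big\lVert\int_s^{Tu}K^T(t,s)\dif t - \sum_i\int\Psi\dif t\big\rVert_1\dif s$.

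Next I would like to apply Lemma~\ref{lemma:deterministic_approximation} inside the integral, but that lemma gives a bound $C_{g,\varphi}\,\omega(g,\Delta_T/T)$ that is \emph{uniform in $s$}, so naively integrating over $s\in[0,Tu]$ produces $\frac{C_m}{T}\cdot Tu\cdot C_{g,\varphi}\,\omega(g,\Delta_T/T) = C_m u\,C_{g,\varphi}\,\omega(g,\Delta_T/T)$. Since $g$ is continuous (indeed $C^1$) on $[0,1]$ by Assumption~\ref{ass:g_and_mu_are_C0}, its modulus of continuity satisfies $\omega(g,\Delta_T/T)\to 0$ as $T\to\infty$ because $\Delta_T=o(T)$; and the bound is independent of $u\in[0,1]$. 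Hence the supremum over $u$ of the whole expression is $\le C_m C_{g,\varphi}\,\omega(g,\Delta_T/T)\to 0$, which is exactly the claim.

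**Main obstacle.** The only genuinely delicate point is the legitimacy of the Fubini swap and of pulling the $L^1$-norm through the $s$-integral together with the finiteness of all the constants --- in particular that $\sum_i\int_{i\Delta_T u}^{(i+1)\Delta_T u}\Psi(\cdot,t-s)\dif t$ is itself dominated by $\int_0^\infty\bar\Psi$ coordinate-wise (so that the difference inside the norm is integrable in $s$ uniformly). This follows because each $\Psi(x,\cdot)\le\bar\Psi$ pointwise (from $g(x)\varphi\le\bar\varphi$ and comparing the resolvent series term by term), and the intervals $[i\Delta_T u,(i+1)\Delta_T u)$ tile $[0,Tu]$; hence $\sum_i\int_{i\Delta_T u}^{(i+1)\Delta_T u}\Psi(iu\tfrac{\Delta_T}{T},t-s)\dif t \le \int_0^{Tu}\bar\Psi(t-s)\dif t \le \int_0^\infty\bar\Psi(t)\dif t<\infty$ coordinate-wise, with the right-hand side independent of $s,u,T$. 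With this domination in hand, every interchange of sum/integral/norm above is justified by Tonelli, and the proof reduces to the one-line estimate already described. I would also note explicitly that no edge-case analysis regarding the position of $s$ relative to the grid cells $A_i^{T,u}$ is needed, exactly as remarked after Lemma~\ref{lemma:deterministic_approximation}, since the integrands vanish whenever $t<s$.
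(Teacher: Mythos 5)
Your proposal is correct and follows essentially the same route as the paper: swap the order of integration by Fubini, bound $m$ by its sup-norm, apply the uniform-in-$(s,u)$ estimate of Lemma~\ref{lemma:deterministic_approximation} inside the $s$-integral, and conclude with $\omega(g,\nicefrac{\Delta_T}{T}) \to 0$ since $\Delta_T = o(T)$. The additional domination argument you give (via $\Psi(x,\cdot)\leq\bar{\Psi}$ and the tiling of $[0,Tu]$) is a harmless justification the paper leaves implicit.
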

\begin{proof}
    For any $T>0$ and any $u \in [0,1]$, by Fubini's Theorem,
    \begin{align*}
        \frac{1}{T}
        \int_0^{Tu} 
            \int_0^t 
                K^T(t,s) 
                m\big( \frac{s}{T} \big)
            \dif s
        \dif t
        =
        \frac{1}{T}
        \int_0^{Tu}
            \Big( 
            \int_s^{Tu}
                K^T(t,s) 
            \dif t
            \Big)
            m\big( \frac{s}{T} \big)
        \dif s,
    \end{align*}
    hence, using Lemma~\ref{lemma:deterministic_approximation}, one has for any $u \in [0,1]$
    \begin{align*}
         \frac{1}{T}
         \Big\lVert 
        \int_0^{Tu}
            \int_0^t 
                K^T(t,s) 
                m \big( \frac{s}{T} \big) 
            \dif s
        \dif t
        -
        \frac{1}{T}
        \int_0^{Tu}
        \sum_{i=0}^{\nicefrac{T}{\Delta_T}-1}
            &\int
            _{i \Delta_T u}
            ^{(i+1) \Delta_T u }
                \Psi(i u \frac{\Delta_T}{T},t-s) 
                m\big( \frac{s}{T} \big) 
            \dif t
        \dif s 
        \Big\rVert_1
        \\
        &\leq
        C \sup_{x \in [0,1]} \lVert m(x) \rVert_1
        \frac{1}{T} 
        \int_0^T
        \sum_{i=0}^{ \nicefrac{T}{\Delta_T}-1 }
        u \frac{\Delta_T}{T} 
            \omega(g,\frac{\Delta}{T})
        \dif t 
        \\
        &\leq C' \omega(g , \frac{\Delta_T}{T})
        \xrightarrow[T \to \infty]{}
        0,
    \end{align*}
    where $C$ and $C'$ are positive constants. 
\end{proof}

\begin{Lemma}\label{ref:third_deterministic_approximation}Under Assumptions~\ref{ass:g_and_mu_are_C0} to~\ref{ass:stability}, as $T \to \infty$,
    \begin{equation*}
    \Big\lVert
        \frac{1}{T}
        \int_0^{Tu} 
        \sum_{i=0}^{ \nicefrac{T}{\Delta_T}-1 }
            \int_{i \Delta_T u}
            ^{ (i+1) \Delta_Tu}
                \Psi(i u \frac{\Delta_T}{T},t-s) 
                m\big( \frac{s}{T} \big) 
            \dif t
        \dif s
        -
        \int_0^u
            \Big( 
            \int_0^{\infty}
                \Psi(x,s) 
            \dif s
            \Big)m(x) 
        \dif x 
        \Big\rVert_1
        \to
        0,
    \end{equation*}
    uniformly in $ u\in [0,1]$
\end{Lemma}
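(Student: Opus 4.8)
The plan is to recognise the double integral in the first term as a Riemann sum approximating $\int_0^u G(x)\,m(x)\,\dif x$, where $G(x):=\int_0^\infty\Psi(x,v)\,\dif v$, and to estimate the three errors incurred along the way, checking at each stage that the bound does not depend on $u$. Throughout set $x_i:=iu\Delta_T/T$ and $A_i:=[i\Delta_T u,(i+1)\Delta_T u)$; the $A_i$, $0\le i\le\nicefrac{T}{\Delta_T}-1$, partition $[0,Tu)$, and each $x_i$ lies in $[0,u]\subseteq[0,1]$. Write $C_\Psi:=\lVert\int_0^\infty\bar\Psi\rVert_1$, finite under Assumption~\ref{ass:stability}; recall the coordinate-wise bound $\Psi(x,\cdot)\le\bar\Psi$ used in the proof of Lemma~\ref{Lemma:gamma_continuity}; and recall that Lemma~\ref{Lemma:gamma_continuity} taken at $s=0$ makes $G$ uniformly continuous on $[0,1]$ with $\lVert G\rVert_\infty\le C_\Psi$.

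First I would apply Fubini's theorem --- legitimate since $\int_0^{Tu}\int_{A_i}\lVert\Psi(x_i,t-s)\rVert\,\lVert m(\nicefrac{s}{T})\rVert_1\,\dif t\,\dif s\le\lVert m\rVert_\infty C_\Psi\lvert A_i\rvert<\infty$ --- together with the change of variable $v=t-s$ (using $\Psi(x_i,v)=0$ for $v<0$ and $t\le Tu$) to bring the first term to the form $\frac1T\sum_i\int_{A_i}\big(\int_0^t\Psi(x_i,v)\,m(\tfrac{t-v}{T})\,\dif v\big)\,\dif t$.

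Next I would pass from this to $\frac1T\sum_i\int_{A_i}G(x_i)\,m(\nicefrac{t}{T})\,\dif t$ via two estimates. Replacing $m(\tfrac{t-v}{T})$ by $m(\nicefrac{t}{T})$ and splitting $\int_0^t=\int_0^{R_T}+\int_{R_T}^t$ with a cutoff $R_T\to\infty$, $R_T=o(T)$ (say $R_T=\sqrt{T}$), the near part is at most $C_\Psi\,\omega(m,R_T/T)$ by uniform continuity of $m$ on $[0,1]$ and the far part at most $2\lVert m\rVert_\infty\int_{R_T}^\infty\lVert\bar\Psi\rVert$; multiplying by $\frac1T\int_{A_i}\dif t$ and summing contributes only a factor $\frac1T\sum_i\lvert A_i\rvert=u\le 1$, so this error vanishes uniformly in $u$. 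Replacing $\int_0^t\Psi(x_i,v)\,\dif v$ by $G(x_i)$ costs the tail $\int_t^\infty\Psi(x_i,v)\,\dif v\le\varepsilon(t):=\int_t^\infty\lVert\bar\Psi\rVert$ coordinate-wise, so this error is at most $\frac{\lVert m\rVert_\infty}{T}\int_0^{Tu}\varepsilon(t)\,\dif t\le\lVert m\rVert_\infty\cdot\frac1T\int_0^T\varepsilon(t)\,\dif t$, which tends to $0$ by Cesàro convergence, since $\varepsilon$ is bounded by $C_\Psi$ and $\varepsilon(t)\to 0$ --- again uniformly in $u$, and crucially without any moment condition on $\varphi$.

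Finally, the substitution $x=\nicefrac{t}{T}$ turns $\frac1T\sum_i\int_{A_i}G(x_i)\,m(\nicefrac{t}{T})\,\dif t$ into $\sum_i\int_{x_i}^{x_{i+1}}G(x_i)\,m(x)\,\dif x$, where the intervals $[x_i,x_{i+1})$ partition $[0,u)$; subtracting $\int_0^u G(x)\,m(x)\,\dif x=\sum_i\int_{x_i}^{x_{i+1}}G(x)\,m(x)\,\dif x$ leaves $\sum_i\int_{x_i}^{x_{i+1}}(G(x_i)-G(x))\,m(x)\,\dif x$, bounded by $\lVert m\rVert_\infty\,\omega(G,\Delta_T/T)\cdot u\to 0$ since $\Delta_T/T\to 0$ and $G$ is uniformly continuous. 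Adding the three bounds gives the stated convergence at a rate independent of $u$. The only genuine subtlety is the second estimate: one cannot integrate $\varepsilon(t)$ over all of $[0,\infty)$ (that would require $\int_0^\infty v\,\bar\Psi(v)\,\dif v<\infty$), and it is precisely the averaging over $[0,T]$ that keeps Assumption~\ref{ass:stability} alone sufficient; everything else is bookkeeping to carry the uniformity in $u$ through all three terms at once.
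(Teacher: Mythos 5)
Your argument is correct, and it rests on the same ingredients as the paper's proof: Fubini plus the change of variable $v=t-s$, tail estimates driven by the integrability of $\bar\Psi$, and a final Riemann-sum comparison using the uniform continuity of $x\mapsto\int_0^\infty\Psi(x,s)\,\dif s$ supplied by Lemma~\ref{Lemma:gamma_continuity}. The bookkeeping, however, is genuinely different. The paper compares the inner integral directly with $\big(\int_0^\infty\Psi(x_i,s)\,\dif s\big)m(x_i)$, i.e.\ it also freezes $m$ at the grid points, and it secures uniformity in $u$ through an \emph{à la} Cesàro case split: a free parameter $\eta$, the regime $u<\eta$ absorbed into an $O(\eta)$ bound, the regime $u\ge\eta$ treated with the $u$-dependent cutoff $\eta\Delta_T$, and the $i=0$ block discarded separately. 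You instead keep $m$ evaluated at $\nicefrac{t}{T}$ and perform two successive replacements --- $m(\tfrac{t-v}{T})\to m(\tfrac{t}{T})$ with a $u$-independent cutoff $R_T\to\infty$, $R_T=o(T)$, and then $\int_0^t\to\int_0^\infty$ with the averaged tail bound $\tfrac1T\int_0^T\big(\int_t^\infty\lVert\bar\Psi(s)\rVert_1\,\dif s\big)\dif t\to0$ --- so that no case distinction in $u$, no relation between the cutoff and $\Delta_T$, and no separate treatment of $i=0$ are needed; your remark that it is precisely this Cesàro averaging that spares a first-moment condition on $\varphi$ is exactly the point, and matches the paper's reliance on Assumption~\ref{ass:stability} alone. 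What each version buys: yours is a cleaner and more manifestly $u$-uniform estimate (the paper must additionally invoke equicontinuity to make its Riemann sum converge uniformly in $u$); the paper's frozen-grid form $\sum_i u\frac{\Delta_T}{T}\big(\int_0^\infty\Psi(x_i,s)\,\dif s\big)m(x_i)$ is the expression its subsequent lemma revisits when upgrading the convergence to the $o(\nicefrac{1}{\sqrt T})$ rate under Lipschitz continuity of $m$ and Assumption~\ref{ass:sqrt_integ}, although your decomposition could be quantified in the same way.
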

\begin{proof}
    For any $T>0$,  $u \in [0,1]$, and any $i \in \mathbb{N}$, by Fubini's Theorem and the fact that $\Psi(x,t-s)=0$ for any $s>t$,
    \begin{align*}
         \int_0^{Tu} 
            \int_{ i \Delta_T u}^{(i+1) \Delta_T u }
                &\Psi(i u \frac{\Delta_T}{T},t-s) m\big( \frac{s}{T} \big)
            \dif t 
        \dif s
       =
        \int_{i \Delta_T u}^{(i+1) \Delta_T u  } 
            \int_0^t 
                \Psi(i u \frac{\Delta_T}{T},s)
                m\big( 
                \frac{t}{T} - \frac{s}{T}
            \big)
            \dif s 
        \dif t.
    \end{align*}
    Note that, for any $u \in [0,1]$, the first term at $i=0$ abides by the bound 
    \begin{equation*}
        \big\lVert
        \frac{1}{T}
        \int_0^{u \Delta_T} \int_0^t \Psi(0,s) m\big( \frac{t}{T}-\frac{s}{T}\big)\dif s \dif t
        \big\rVert_1 
        \leq 
        \frac{\Delta_T}{T} \sup_{x \in [0,1]} \lVert m(x) \rVert \Big( \int_0^{\infty} \lVert \bar{\Psi}(s) \rVert_1 \dif s \Big),
    \end{equation*}
    which vanishes as $T \to \infty$, and one may thus consider exclusively the summands wherein $i \geq 1$. Working \textit{à la} Cesàro, we let $\eta \in (0,1]$. For any $u \in [0,\eta)$,
    \begin{align*}
        \frac{1}{T}
        \Big\lVert
        \sum_{i=1}^{\nicefrac{T}{\Delta_T} -1}
        \int_{i \Delta_T u}^{(i+1) \Delta_T u }
        \int_0^t \Psi( i u \frac{\Delta_T}{T},s) \dif s \dif t
        \Big\rVert_1
       \leq  \eta
         \Big( 
            \int_0^{\infty}
                \lVert \Bar{\Psi}(t) \rVert_1 
            \dif t
        \Big),
    \end{align*}
    and a similar bound holds for the limit term $\int_0^u (\int_0^{\infty} \Psi(x,s) \dif s) m(x)\dif x$; while for any $u \in [\eta,1]$, $i \in \mathbb{N}^{\star}$, and any $t>0$ such that $u i \Delta_T \leq t \leq u (i+1) \Delta_T$
    \begin{align}
    \int_0^t 
        \Psi(i u\frac{\Delta_T}{T},s) 
        m \big(\frac{t}{T}- \frac{s}{T} \big) 
    \dif s
    &-
    \Big( 
    \int_0^{\infty} 
        \Psi(i u\frac{\Delta_T}{T},s)
    \dif s \Big) 
    m\big( i u \frac{\Delta_T}{T} \big) 
    \nonumber
    \\
    &=
        \int_0^t 
            \Psi(i u\frac{\Delta_T}{T},s) 
            \big\{
            m\big( 
                \frac{t}{T} - \frac{s}{T}
            \big)
            -
            m\big(  i u \frac{\Delta_T}{T} \big)
            \big\}
        \dif s 
        \label{equ:Cesaro_sep}
    \\
       &-
        \Big( 
        \int_t^{\infty}
            \Psi(i u\frac{\Delta_T}{T},s) 
        \dif s
        \Big)
        m\big( i u \frac{\Delta_T}{T} \big).
        \nonumber
    \end{align}
    Separating the first integral on the right-hand side of the preceding equation about $\eta \Delta_T$, one finds that it is bounded in $\lVert \cdot \rVert_1$-norm by
    \begin{align*}
        \Big( 
        \int_0^{\eta \Delta_T}
            \lVert \Psi(i u \frac{\Delta_T}{T},s) \rVert_1
        \dif s \Big)
        \omega(m, \frac{\Delta_T}{T})
        +
        2
         \Big( 
        \int_{\eta \Delta_T}^{\infty}
            \lVert \Psi(i u \frac{\Delta_T}{T},s) \rVert_1
        \dif s \Big) 
        \lVert m \rVert_{L^{\infty}}.
    \end{align*}
    The last term in~\eqref{equ:Cesaro_sep} is also bounded by $
        ( 
        \int_{\eta \Delta_T}^{\infty} 
            \lVert 
            \Bar{\Psi}(s)
            \rVert_1 
        \dif s) 
         \lVert m \rVert_{L^{\infty}}
    $. Consequently, for any $u \in [\eta,1]$,
    \begin{align*}
        \frac{1}{T}
        \sum_{i=1}^{\nicefrac{T}{\Delta_T} -1}
        \int_{i \Delta_T u}^{(i+1) \Delta_T u } 
            \int_0^t 
                \Psi(i u \frac{\Delta_T}{T},t-s) 
                m\big( \frac{s}{T}\big)
            &\dif s 
        \dif t
        -
         \Big( 
            \int_0^{\infty} 
                \Psi(i u \frac{\Delta_T}{T},s) 
            \dif s \Big) 
            m( i \frac{\Delta_T}{T} u)
        \dif t
        \\
        &\leq 
        C\Big(
        \frac{ \Delta_T}{T}
        \sum_{i=1}^{ \nicefrac{T}{\Delta_T} -1} 
        \omega(m,\frac{\Delta_T}{T})
        +
        \int_{\eta \Delta_T}^{\infty} \lVert \Bar{\Psi}(s)  \rVert
        \dif s
        \Big)
        \\
        &\leq 
        C
        \Big( 
        \omega(m,\frac{\Delta_T}{T})
        +
        \int_{\eta \Delta_T}^{\infty} \lVert \Bar{\Psi}(s) \rVert_1 \dif s
        \Big),
    \end{align*}
    where $C$ is some positive constant. The bound goes to $0$ on account of the integrability of $\Bar{\Psi}$ and uniform continuity of $m$ over the compact interval $[0,1]$. Altogether, we have established the uniform convergence of the expression of interest towards
    \begin{equation*}
     \frac{1}{T}
        \sum_{i=0}^{ \nicefrac{T}{\Delta_T} -1}
        \int_{i \Delta_T u}^{(i+1) \Delta_T u }      
        \Big( 
            \int_0^{\infty}
                \Psi(i u \frac{\Delta_T}{T},s) 
            \dif s 
        \Big) 
        m\big( i u \frac{\Delta_T}{T}\big)
        \dif t,
    \end{equation*}
    wherein one recognises a Riemann series. Using the change of variable $x \mapsto ux$ then,
    \begin{equation}\label{equ:Riemann_convergence}
    \sum_{i=0}^{ \nicefrac{T}{\Delta_T} -1}
        u\frac{\Delta_T}{T}
        \Big( 
            \int_0^{\infty}
                \Psi( i u \frac{\Delta_T}{T},s) 
            \dif s 
        \Big) m( i \frac{\Delta_T}{T} u)
        \to
         \int_0^u \Big( \int_0^{\infty}\Psi(x,s) \dif s\Big) m(x) \dif x
    \end{equation}
    
    as $T \to \infty$. This holds uniformly in $u$ by virtue of the uniform equicontinuity in $u$  of the Riemann series over the compact interval $[0,1]$, itself a consequence of the uniform continuity of $x \mapsto \int_0^{\infty} \Psi(x,s) \dif s$ obtained in Lemma~\ref{Lemma:gamma_continuity}.  The Lemma ensues.
\end{proof}

Gathering Lemmata~\ref{lemma:deterministic_approximation} to~\ref{ref:third_deterministic_approximation}, we have established the first assertion of Proposition~\ref{prop:deterministic_convergence}.  
When assumption~\ref{ass:sqrt_integ} holds, the rate of convergence may be further quantified.

\begin{Lemma}
   Suppose $m$ is Lipschitz-continuous and Assumptions~\ref{ass:g_and_mu_are_C0} to~\ref{ass:stability} and ~\ref{ass:sqrt_integ} hold. Then, the convergences in Lemmata~\ref{prop:deterministic_convergence} to~\ref{ref:third_deterministic_approximation} occur at rate $o(\frac{1}{\sqrt{T}})$
\end{Lemma}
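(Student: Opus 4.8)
The plan is to revisit each of the three approximation lemmas with the sharper quantitative inputs. The first two require almost nothing new: under Assumption~\ref{ass:g_and_mu_are_C0} the function $g$ is Lipschitz on the compact $[0,1]$, so $\omega(g,\delta)\le\|g'\|_{L^{\infty}}\delta$, and the bounds $C_{g,\varphi}\,\omega(g,\Delta_T/T)$ and $C'\,\omega(g,\Delta_T/T)$ appearing in Lemmata~\ref{lemma:deterministic_approximation} and~\ref{Lemma:second_deterministic_approximation} become $O(\Delta_T/T)$. This is $o(1/\sqrt T)$ as soon as one strengthens the requirement on the grid to $\Delta_T\to\infty$ and $\Delta_T=o(\sqrt T)$ (e.g. $\Delta_T=T^{1/3}$), which remains compatible with all the earlier proofs. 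The difficulty lies entirely in Lemma~\ref{ref:third_deterministic_approximation}, whose proof as written cannot deliver $o(1/\sqrt T)$: it balances a term $\omega(m,\Delta_T/T)=O(\Delta_T/T)$ against the resolvent tail $\int_{\eta\Delta_T}^{\infty}\|\bar{\Psi}(s)\|\,ds$, and no single choice of $\Delta_T$ makes both quantities $o(1/\sqrt T)$.

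I would first record the one genuinely new integrability fact: under Assumption~\ref{ass:sqrt_integ} one has $\int_0^{\infty}\|\bar{\Psi}(s)\|\sqrt s\,ds<\infty$. Writing $B=\int_0^{\infty}\bar{\varphi}(s)\,ds$ and $A=\int_0^{\infty}\bar{\varphi}(s)\sqrt s\,ds$ (finite by Assumption~\ref{ass:sqrt_integ}), the inequality $\sqrt{s_1+\cdots+s_i}\le\sqrt{s_1}+\cdots+\sqrt{s_i}$ gives $\int_0^{\infty}\bar{\varphi}^{\star i}(s)\sqrt s\,ds\le\sum_{j=1}^{i}B^{j-1}AB^{i-j}$ coordinate-wise, and summing over $i$ using $\rho(B)<1$ yields $\int_0^{\infty}\bar{\Psi}(s)\sqrt s\,ds\le(\boldsymbol{Id}-B)^{-1}A(\boldsymbol{Id}-B)^{-1}<\infty$ coordinate-wise. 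From this I would extract two estimates, each by dominated convergence with dominating function $s\mapsto\sqrt s\,\|\bar{\Psi}(s)\|$: $\int_R^{\infty}\|\bar{\Psi}(s)\|\,ds=o(R^{-1/2})$, and $\int_0^{\infty}\min(s,M)\,\|\bar{\Psi}(s)\|\,ds=o(\sqrt M)$ as $R,M\to\infty$. It is exactly here that the $\sqrt{}$-integrability of $\varphi$ acts as a critical threshold.

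The heart of the argument is then a re-run of the proof of Lemma~\ref{ref:third_deterministic_approximation} with one change. After the Fubini step that rewrites the sum as $\tfrac1T\sum_i\int_{i\Delta_T u}^{(i+1)\Delta_T u}\int_0^t\Psi(iu\tfrac{\Delta_T}{T},s)\,m(\tfrac tT-\tfrac sT)\,ds\,dt$, I would freeze $m$ at the \emph{running} time $\tfrac tT$ instead of at the block endpoint, splitting $m(\tfrac tT-\tfrac sT)=m(\tfrac tT)+[m(\tfrac tT-\tfrac sT)-m(\tfrac tT)]$. With $L$ a Lipschitz constant for $m$, the increment part is controlled through $|m(\tfrac tT-\tfrac sT)-m(\tfrac tT)|\le\min(L\tfrac sT,2\|m\|_{L^{\infty}})\le\sqrt{2L\|m\|_{L^{\infty}}/T}\,\sqrt s$, hence by $u\sqrt{2L\|m\|_{L^{\infty}}/T}\int_0^{\infty}\|\bar{\Psi}(s)\|\sqrt s\,ds$, which is $o(1/\sqrt T)$ (the little-$o$, again by dominated convergence). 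For the constant-$m$ part, $\int_0^t\Psi(iu\tfrac{\Delta_T}{T},s)\,ds=\Gamma(iu\tfrac{\Delta_T}{T})-\int_t^{\infty}\Psi(iu\tfrac{\Delta_T}{T},s)\,ds$ with $\Gamma(x):=\int_0^{\infty}\Psi(x,s)\,ds=(\boldsymbol{Id}-\boldsymbol{K}(x))^{-1}-\boldsymbol{Id}$; the $\int_t^{\infty}$ remainder, integrated over $t\in[0,Tu]$ and rearranged by Fubini, is at most $\tfrac{\|m\|_{L^{\infty}}}{T}\int_0^{\infty}\min(s,Tu)\,\|\bar{\Psi}(s)\|\,ds=o(1/\sqrt T)$ by the second estimate above, while $\tfrac1T\int_0^{Tu}\Gamma(\iota(t)u\tfrac{\Delta_T}{T})\,m(\tfrac tT)\,dt$ (where $\iota(t)$ is the index of the block containing $t$) differs from $\int_0^u\Gamma(x)m(x)\,dx$ by at most $u\,L_{\Gamma}\|m\|_{L^{\infty}}\,\Delta_T/T=O(\Delta_T/T)=o(1/\sqrt T)$, using that $\Gamma$ is Lipschitz — it is $C^1$ in $x$ since $g$ is, or one invokes the $s=0$ case of Lemma~\ref{Lemma:gamma_continuity} together with $\omega(g,\cdot)\le\|g'\|_{L^{\infty}}(\cdot)$ — and that $\iota(t)u\tfrac{\Delta_T}{T}$ lies within $u\Delta_T/T$ of $\tfrac tT$. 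Assembling these pieces with the two crude estimates and the triangle inequality gives the $o(1/\sqrt T)$ rate for the convergence of Proposition~\ref{prop:deterministic_convergence} and, \textit{a fortiori}, of each of the lemmata.

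The main obstacle is precisely this tension in the size of $\Delta_T$: the $g$- and $\Gamma$-Riemann approximations force $\Delta_T=o(\sqrt T)$, whereas the way the original proof treats the far part of the resolvent needs $\Delta_T$ large relative to $T$. Re-centering $m$ at the running time $t$ is what breaks the deadlock: it eliminates the artificial cutoff at $\eta\Delta_T$, replacing the block-wise tail bound by the single integral $\int_t^{\infty}\|\bar{\Psi}\|$ over $[0,Tu]$, at which point the two consequences of Assumption~\ref{ass:sqrt_integ} supply exactly the $o(\sqrt T)$ of slack needed before dividing by $T$.
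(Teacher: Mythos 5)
Your proposal is correct and takes essentially the same route as the paper: Lipschitz-continuity of $g$ together with the strengthened grid choice $\Delta_T=o(\sqrt{T})$ disposes of Lemmata~\ref{lemma:deterministic_approximation} and~\ref{Lemma:second_deterministic_approximation}, while the rate in Lemma~\ref{ref:third_deterministic_approximation} is obtained by re-centering $m$ at the running time $\nicefrac{t}{T}$ and exploiting $\int_0^{\infty}\sqrt{s}\,\lVert\bar{\Psi}(s)\rVert_1\dif s<\infty$ — precisely the two ingredients of the paper's argument. The only deviations are cosmetic: you prove the $\sqrt{s}$-integrability of $\bar{\Psi}$ where the paper cites Bacry \textit{et al.}, and you treat the resolvent tail in one stroke via Fubini and $\int_0^{\infty}\min(s,Tu)\lVert\bar{\Psi}(s)\rVert_1\dif s=o(\sqrt{T})$ instead of the paper's per-block $t^{-\nicefrac{1}{2}}$ bound, telescoping sum and Cesàro splitting, which if anything yields the uniformity in $u$ more directly.
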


\begin{proof}
    The final bounds obtained in the proofs of Lemmata~\ref{lemma:deterministic_approximation} and~\ref{Lemma:second_deterministic_approximation} decrease at rate $\omega(g,\frac{\Delta_T}{T})$ which is in $ \mathcal{O}(\frac{\Delta_T}{T})$ since $g$ is continuously differentiable over $[0,1]$ under Assumption~\ref{ass:g_and_mu_are_C0}, hence Lipschitz-continuous. As we have only asked that $\Delta_T = o(T)$, it is not restrictive to further assume  that $\Delta_T = o(\sqrt{T})$, in which case a rate of $o(\frac{1}{\sqrt{T}})$ is indeed achieved.\\
    
    Obtaining an explicit convergence rate in Lemma~\ref{ref:third_deterministic_approximation} is slightly more involved, and we need to revisit its proof. Suppose $\varphi$ satisfies Assumption~\ref{ass:sqrt_integ}, in which case $\int_0^{\infty} \lVert \bar{\Psi}(s) \rVert_1 \sqrt{s} \dif s< \infty$ (see for instance Bacry \textit{et al.}~\cite[Proof of Lemma 5]{bacrylimit}).  Let $m \colon [0,1] \mapsto \R$ be some Lipschitz-continuous function, with $\lVert m(y)-m(x) \rVert \leq C \lvert x-y\rvert$.  Firstly, for any $T>0$ and any $u \in [0,1]$, the difference
    \begin{equation*}
        \frac{1}{\sqrt{T}}
        \sum_{i=0}^{\lfloor \nicefrac{T}{\Delta_T} \rfloor}
        \int_{i u\Delta_T }^{(i+1)u\Delta_T}
        \int_0^t \Psi(i u \frac{\Delta_T}{T},s) 
        m\big(\frac{t}{T}-\frac{s}{T} \big) \dif s
        -
        \int_{i u\Delta_T }^{(i+1)u\Delta_T}
        \Big(
        \int_0^t \Psi(i u \frac{\Delta_T}{T},s) 
        \dif s 
        \Big) 
        m\big(\frac{t}{T} \big)  \dif t
    \end{equation*}
    is bounded in $\lVert \cdot \rVert_1$-norm by
    \begin{align}
        \frac{1}{\sqrt{T}}
        \sum_{i=0}^{\lfloor \nicefrac{T}{\Delta_T} \rfloor}
         \int_{i u\Delta_T }^{(i+1)u\Delta_T}
        \int_0^t &\lVert \Psi(i u \frac{\Delta_T}{T},s)
        \rVert_1
         \lVert  m\big(\frac{t}{T}-\frac{s}{T} \big) -  m\big(\frac{t}{T} \big)\rVert_1  
         \dif s
         \dif t
         \nonumber
         \\
         &\leq
         \frac{C}{\sqrt{T}}
        \sum_{i=0}^{\lfloor \nicefrac{T}{\Delta_T} \rfloor}
         \int_{i u\Delta_T }^{(i+1)u\Delta_T}
         \Big(
        \int_0^t \lVert \Psi(i u \frac{\Delta_T}{T},s)
        \rVert_1
         \frac{s}{T}  
         \dif s
         \Big) 
         \dif t
         \label{equ:ready_IPP_bound}
         \\
         & \leq 
         \frac{C}{\sqrt{T}} \int_0^T  
         s\lVert \bar{\Psi}(s)
        \rVert_1 \dif s.
        \nonumber
    \end{align}

    It is an elementary exercise to see that $\frac{1}{\sqrt{T}} \int_0^T s \lVert \bar{\Psi}(s) \rVert_1 \dif s \to 0$  as $T \to \infty$ if $\int_0^{\infty}\lVert \bar{\Psi} (s)\rVert_1 \sqrt{s} <\infty$  (see again Bacry \textit{et al.}~\cite[Proof of Lemma 5]{bacrylimit}). The bound above then vanishes as $T \to \infty$. Secondly, using once more the Lipschitz-continuity of $m$, for any $T>0$ and any $u \in [0,1]$, 
    \begin{equation*}
    \big\lVert
         \frac{1}{\sqrt{T}}
        \sum_{i=1}^{\lfloor \nicefrac{T}{\Delta_T} \rfloor}
        \int_{i u\Delta_T }^{(i+1)u\Delta_T}
        \Big(
        \int_0^t \Psi(i u \frac{\Delta_T}{T},s) 
        \dif s 
        \Big) 
        \{
        m\big(\frac{t}{T} \big)  
        -
        m\big( i u\frac{\Delta_T}{T} \big)
        \}
        \dif t
    \big\rVert_1
    \end{equation*}
    is bounded by $\frac{C}{\sqrt{T}} \int_0^{\infty} \lVert \bar{\Psi}(s) \rVert_1 \dif s$ and goes to $0$ as $T \to \infty$. Third and finally, 
    \begin{align*}
         \frac{1}{\sqrt{T}}
        \sum_{i=0}^{\lfloor \nicefrac{T}{\Delta_T} \rfloor}
        \int_{i u\Delta_T }^{(i+1)u\Delta_T}
        \Big(
        &\int_0^t \Psi(i u \frac{\Delta_T}{T},s) 
        \dif s 
        -
        \int_0^{\infty} \Psi(i u \frac{\Delta_T}{T},s) 
        \dif s 
        \Big) 
        m\big( i u\frac{\Delta_T}{T}  \big)  
        \dif t
        )
        \\
        &\leq
        \sup_{x \in [0,1]} \lVert m(x) \rVert_1
        \frac{1}{\sqrt{T}}
        \sum_{i=0}^{\lfloor \nicefrac{T}{\Delta_T} \rfloor}
        \int_{i u\Delta_T }^{(i+1)u\Delta_T}
        \Big(
        \int_t^{\infty} 
        \lVert \bar{\Psi}(s) 
        \rVert_1 \dif s
        \Big)
        \dif t.
    \end{align*}
   Recall that we work under $\Delta_T=o(\sqrt{T})$. The first term at $i=0$ obeys the bound
   $  \frac{\Delta_T}{\sqrt{T}}    \int_0^{\infty}  \lVert \bar{\Psi}(t) \rVert_1 \dif t
    $,
   and one may thus discard it as we have done in Lemma~\ref{ref:third_deterministic_approximation}. Hence we consider only the summands at $i \geq 1$. Using the bound $ \int_t^{\infty} \lVert \bar{\Psi} \rVert_1 \dif s \leq  t^{-\nicefrac{1}{2}} \int_t^{\infty}  \lVert \sqrt{s} \bar{\Psi} (s)\rVert_1 \dif s$ then, the expression above is dominated (up to its first term and some multiplicative constant) by
    \begin{align*}
        \frac{1}{\sqrt{T}}\sum_{i=1}^{\lfloor\nicefrac{T}{\Delta_T} \rfloor}
        \Big( \int_{i u\Delta_T }^{(i+1)u\Delta_T}
        \frac{1}{\sqrt{t}} \dif t \Big)
        \Big( 
        &\int_{i \Delta_T}^{\infty} \lVert \Bar{\Psi}(s) \rVert_1
        \sqrt{s}\dif s \Big)
        \\
        &=
        \frac{2}{\sqrt{T}}
        \sum_{i=1}^{\lfloor\nicefrac{T}{\Delta_T} \rfloor}
        \Big( \sqrt{(i+1) u \Delta_T} - \sqrt{i u \Delta_T}\Big)
        \Big( 
        \int_{iu \Delta_T}^{\infty} \lVert \Bar{\Psi}(s) \rVert_1
        \sqrt{s}
        \dif s \Big)
        \\
        &\leq 
        \frac{2}{\sqrt{T}}\Big( \sqrt{uT} - \sqrt{u\Delta_T} \Big)
        \int_{u \Delta_T}^{\infty} \lVert \bar{\Psi}(s) \rVert_1 
        \sqrt{s}\dif s.
    \end{align*}
    Let $\eta \in [0,1]$ and separate the cases $u \in [0,\eta)$ and $u \in [\eta,1)$. The bound above is then itself dominated either by $2\eta \int_0^{\infty} \lVert \bar{\Psi}(s) \rVert_1 \dif s$ or by $2\int_{\eta \Delta_T}^{\infty} \lVert \bar{\Psi}(s) \rVert_1 \dif s$. It may thus be taken arbitrarily small as $T \to \infty$ on account of the integrability of $\bar{\Psi}$. There remains only to verify that the convergence of the Riemann series~\eqref{equ:Riemann_convergence} does occur at rate $o(\frac{1}{\sqrt{T}})$ too. However, some straightforward calculations and Lemma~\ref{Lemma:gamma_continuity} show that
    \begin{align*}
    \big\lVert
        \sum_{i=0}^{\nicefrac{T}{\Delta_T}-1} u \frac{\Delta_T}{T}( \int_0^{\infty} \Psi(i u \frac{\Delta_T}{T},s) \dif s) m(i u \frac{\Delta_T}{T})
        -
        \int_{i \Delta_T  }^{(i+1) \Delta_T}
       u \Big( \int_0^{\infty} \Psi(u\frac{x}{T},s) \dif s \Big)
        m\big( u\frac{x}{T} \big) \dif x
        \big\rVert_1
    \end{align*}
    is dominated -- up to some multiplicative constant -- by $\omega(g,\nicefrac{\Delta_T}{T}) + \omega(m,\nicefrac{\Delta_T}{T})$.  By Lipschitz-continuity of $g$ and $m$, this bound is of order $o(\frac{\Delta_T}{T})$, which is the required estimate as we work under $\Delta_T= o(\sqrt{T})$.
\end{proof}

Before we proceed to the proofs of our main results, we introduce some stochastic counterparts to the Volterra theory we have presented so far.

\subsection{A \textit{trend plus noise} decomposition of the intensity}

\begin{Lemma}[An extension of Jaisson \& Rosenbaum~\cite{JaissonRosenbaum}]\label{Lemma:Jaisson_like}
    Under Assumptions~\ref{ass:g_and_mu_are_C0} to~\ref{ass:stability}, for any $t \leq T$, 
    \begin{equation}\label{equ:jaisson_non_linear_equ}
        \lambda^T_t
        =
        \mu \big( \frac{t}{T} \big)
        +
        \int_0^t
            K^T(t,s) 
            \mu \big( \frac{s}{T} \big) 
        \dif s
        +
            \int_0^t K^T(t,s) \dif M^T_s.
    \end{equation}
\end{Lemma}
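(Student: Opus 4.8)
The plan is to recognize that the intensity equation in Definition~\ref{def:Hawkes_definition} is itself a Volterra equation of the form $\lambda^T_t = m(t) + k^T \star \lambda^T(t)$ with a stochastic forcing term, and to solve it by a variation-of-constants argument using the resolvent $K^T$. First I would write the defining relation
\begin{equation*}
    \lambda^T_t = \mu\big(\tfrac{t}{T}\big) + \int_0^t k^T(t,s)\,dN_s
    = \mu\big(\tfrac{t}{T}\big) + \int_0^t k^T(t,s)\,\lambda^T_s\,ds + \int_0^t k^T(t,s)\,dM^T_s,
\end{equation*}
using the fundamental martingale decomposition $dN_s = \lambda^T_s\,ds + dM^T_s$ from~\eqref{equ:fundamental_martingale}, and noting $k^T(t,s) = g(\tfrac{t}{T})\varphi(t-s)$. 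This exhibits $\lambda^T$ as solving a Volterra equation with (random) free term $m^T(t) = \mu(\tfrac{t}{T}) + \int_0^t k^T(t,s)\,dM^T_s$.

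Next I would invoke the equality case of Lemma~\ref{Lemma:gripenberg} (the resolvent formula for Volterra equations): since $\lambda^T$ satisfies $\lambda^T = m^T + k^T\star\lambda^T$ with equality, it follows that $\lambda^T = m^T + K^T\star m^T$, i.e.
\begin{equation*}
    \lambda^T_t = m^T(t) + \int_0^t K^T(t,s)\,m^T(s)\,ds.
\end{equation*}
Substituting the expression for $m^T$ and expanding, the deterministic part yields $\mu(\tfrac{t}{T}) + \int_0^t K^T(t,s)\mu(\tfrac{s}{T})\,ds$, which is exactly the first two terms of~\eqref{equ:jaisson_non_linear_equ}. The stochastic part gives $\int_0^t k^T(t,s)\,dM^T_s + \int_0^t K^T(t,s)\big(\int_0^s k^T(s,r)\,dM^T_r\big)\,ds$, and one must check that this collapses to $\int_0^t K^T(t,s)\,dM^T_s$. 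This is a stochastic Fubini computation: interchanging the $ds$ and $dM^T_r$ integrals in the double term produces $\int_0^t \big(\int_r^t K^T(t,s)k^T(s,r)\,ds\big)\,dM^T_r = \int_0^t (K^T\star k^T)(t,r)\,dM^T_r$, and since $K^T = k^T + K^T\star k^T$ (equivalently $K^T = \sum_{i\geq 1}(k^T)^{\star i}$, so $k^T + K^T\star k^T = k^T + \sum_{i\geq 2}(k^T)^{\star i} = K^T$), the two stochastic terms add up to $\int_0^t K^T(t,r)\,dM^T_r$, as claimed.

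The main obstacle is the rigorous justification of the stochastic Fubini theorem: one needs integrability of $K^T(t,\cdot)$ and $k^T(\cdot,\cdot)$ against $d\langle M^T\rangle$, which amounts to checking $\int_0^t \int_r^t |K^T(t,s)|\,|k^T(s,r)|\,ds\,d\langle M^T\rangle_r < \infty$. This follows from the coordinate-wise bounds $K^T(t,s)\leq \bar\Psi(t-s)$ and $k^T(t,s)\leq \bar\varphi(t-s)$ established before Lemma~\ref{Lemma:gripenberg}, the $L^1$-integrability of $\bar\Psi$ and $\bar\varphi$, together with $\mathbb{E}[\langle M^T_k\rangle_t] = \mathbb{E}[\int_0^t \lambda^T_{k,s}\,ds] \leq \mathbb{E}[\bar N_{k,t}] < \infty$ guaranteed by Lemma~\ref{lemma:process_if_well_defined}. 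A secondary point worth a line is that Lemma~\ref{Lemma:gripenberg} is stated for deterministic free terms $m$; one applies it pathwise in $\omega$ (the resolvent identity $\lambda^T = m^T + K^T\star m^T$ is purely algebraic once $m^T$ is regarded as a fixed function of $t$), which is legitimate since the equation $\lambda^T = m^T + k^T\star\lambda^T$ holds for a.e.\ $\omega$, and the fact that $m^T(\cdot)$ is a.s.\ locally bounded follows from local finiteness of $N^T$.
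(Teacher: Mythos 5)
Your proposal follows essentially the same route as the paper's proof: decompose $\dif N^T_s = \lambda^T_s \dif s + \dif M^T_s$, view the result as a Volterra equation in $\lambda^T$ with kernel $k^T$, apply the equality case of Lemma~\ref{Lemma:gripenberg}, and collapse the stochastic terms via Fubini and the resolvent identity $k^T + K^T \star k^T = K^T$. Your added care in justifying the stochastic Fubini interchange and the pathwise use of Lemma~\ref{Lemma:gripenberg} with a random free term is a welcome refinement of details the paper leaves implicit, but the argument is the same.
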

\begin{proof}
    For any $0 \leq t \leq T$, separating the martingale and finite variation parts in $\dif N^T_s$, 
    \begin{align*}
    \lambda_t^T =
    \mu(\frac{t}{T})
    +
    \int_0^t
        k^T(t,s) 
    \dif M_s^T 
    +
    \int_0^t 
        k^T(t,s) 
        \lambda^T_s 
    \dif s.
    \end{align*}
    One recognises hereinabove a Volterra equation in $(\lambda^T_t)$ with kernel $k^T$.  Applying Lemma~\ref{Lemma:gripenberg},
    \begin{align*}
        \lambda^T_t 
        =
        \mu(\frac{t}{T})
        +
        \int_0^t
            K^T(t,s) \mu(\frac{s}{T}) 
        \dif s
        +
        \int_0^t 
            k^T(t,s) 
        \dif M_s^T 
        +
        \int_0^t
            K^T(t,s)
            \int_0^s
                k^T(s,u)
            \dif M^T_u
        \dif t.
    \end{align*}
    By Fubini's Theorem, for any $ 0 \leq t \leq T$,
    \begin{equation*}
        \int_0^t 
        K^T(t,s) 
            \int_0^s 
                k^T(s,u) 
            \dif M^T_u
        \dif s
        =
        \int_0^t
        \int_u^t
        K^T(t,s)
        k^T(s,u) \dif s
        \dif M^T_u
        =
        \int_0^t 
            (K^T \star k^T) (t,u)  
        \dif M^T_u,
    \end{equation*}
    and the Lemma follows from the fact that $k^T + K^T \star k^T = K^T$.
\end{proof}
Lemma~\ref{Lemma:Jaisson_like} yields the two immediate useful corollaries~\ref{coro:mean_intensity} \&~\ref{coro:intensity_bound}.
 \begin{corollary}\label{coro:mean_intensity}
     Under Assumptions~\ref{ass:g_and_mu_are_C0} to~\ref{ass:stability}, for any $T>0$ and any $t \in [0,T]$,
     \begin{equation*}
        \mathbb{E}[
            \lambda^T_t
        ]
        =
            \mu \big( \frac{t}{T} \big)
        +
        \int_0^t 
            K^T(t,s)
            \mu \big( \frac{s}{T} \big)
        \dif s.
     \end{equation*}  
 \end{corollary}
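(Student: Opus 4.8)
The plan is to take expectations in the \emph{trend plus noise} decomposition~\eqref{equ:jaisson_non_linear_equ} furnished by Lemma~\ref{Lemma:Jaisson_like}. The first two summands on its right-hand side, $\mu(t/T)$ and $\int_0^t K^T(t,s)\mu(s/T)\dif s$, are deterministic, so the whole content of the statement reduces to the single claim that the stochastic term has zero mean, that is $\mathbb{E}\big[\int_0^t K^T(t,s)\dif M^T_s\big]=0$ for each fixed $t\in[0,T]$.

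To obtain this I would fix $t$, decompose the fundamental martingale as in~\eqref{equ:fundamental_martingale}, and write $\int_0^t K^T(t,s)\dif M^T_s=\int_0^t K^T(t,s)\dif N^T_s-\int_0^t K^T(t,s)\lambda^T_s\dif s$. For fixed $t$ the integrand $s\mapsto K^T(t,s)$ is deterministic, hence predictable, so by the martingale property of the intensity recalled in Section~\ref{section:assumptions} — namely that $\mathbb{E}[\int_0^t H_s\dif N^T_s]=\mathbb{E}[\int_0^t H_s\lambda^T_s\dif s]$ for predictable $H$ with $\mathbb{E}[\int_0^t\|H_s\|_1\dif N^T_s]<\infty$ — it suffices to verify the integrability condition $\mathbb{E}\big[\int_0^t\|K^T(t,s)\|_1\,\dif N^T_s\big]<\infty$, whereupon the two pieces coincide in expectation and cancel.

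The only quantitative step is this integrability bound, and I expect it to be the sole (mild) obstacle. I would invoke the coordinate-wise inequality $K^T(t,s)\leq\bar\Psi(t-s)$ together with the domination of $\boldsymbol N^T$ by the standard Hawkes process $\bar{\boldsymbol N}$ with baseline $\sup_k\mu_k$ and kernel $\bar\varphi$ (their jump sets being nested, both processes being embedded in the same Poisson base), so that $\mathbb{E}[\int_0^t\|K^T(t,s)\|_1\dif N^T_s]\leq\mathbb{E}[\int_0^t\|\bar\Psi(t-s)\|_1\dif\bar N_s]$. Compensating the right-hand side with the intensity $\bar\lambda$ of $\bar{\boldsymbol N}$ and applying Tonelli's theorem gives $\int_0^t\|\bar\Psi(t-s)\|_1\,\mathbb{E}[\bar\lambda_s]\dif s\leq\|\mathbb{E}[\bar\lambda]\|_{L^{\infty}[0,t]}\int_0^t\|\bar\Psi(u)\|_1\dif u<\infty$, which is finite because $s\mapsto\mathbb{E}[\bar\lambda_s]$ is locally bounded (Lemma~\ref{lemma:process_if_well_defined} together with the renewal bound of Lemma~\ref{Lemma:renewal}) and $\bar\Psi\in L^1[0,\infty)$ by~\eqref{equ:bound_resolvent}. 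Taking expectations in~\eqref{equ:jaisson_non_linear_equ} then yields the announced identity; the substance of the corollary is entirely carried by Lemma~\ref{Lemma:Jaisson_like}, the remainder being routine bookkeeping.
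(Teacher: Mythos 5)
Your proposal is correct and coincides with the paper's (implicit) argument: the corollary is obtained exactly by taking expectations in the decomposition of Lemma~\ref{Lemma:Jaisson_like} and noting that the stochastic integral against $\dif M^T$ is centred, the paper treating this as immediate. Your verification of the integrability condition via $K^T(t,s)\leq\bar\Psi(t-s)$, the domination of $\boldsymbol{N}^T$ by the standard Hawkes process $\bar{\boldsymbol{N}}$, and the local boundedness of $\mathbb{E}[\bar\lambda_s]$ is a sound way to make that "immediate" step rigorous.
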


 Recall that for any $T>0$ and $(s,t) \in [0,T]^2$, one has $K^T(t,s) \leq \Bar{\Psi}(t-s)$ with the bounding resolvent $\Bar{\Psi} \in L^1[0,\infty)$ defined in~\eqref{equ:bound_resolvent}. Furthermore, as a geometric series, $\boldsymbol{Id} + \int_0^{\infty} \bar{\Psi}(t) \dif t$  simplifies to $( \boldsymbol{Id} - \int_0^{\infty} \bar{\varphi} (t) \dif t)^{-1}$. Hence the result hereinbelow.

\begin{corollary}\label{coro:intensity_bound}
     Under Assumptions~\ref{ass:g_and_mu_are_C0} to~\ref{ass:stability}, for any $0 \leq t \leq T$, one has the coordinate-wise inequality
     \begin{equation*}
         \mathbb{E}\big[  
            \lambda^T_t
            \big]
         \leq 
          \sup_{x \in [0,1]} \lVert \mu(x) \rVert_1 
         \Big(\boldsymbol{Id} - \int_0^{\infty}
         \bar{\varphi}(t) \dif t\Big)^{-1}.
     \end{equation*}
\end{corollary}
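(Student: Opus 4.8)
The plan is to read off the exact formula for $\mathbb{E}[\lambda^T_t]$ from Corollary~\ref{coro:mean_intensity},
\[
    \mathbb{E}[\lambda^T_t] = \mu\big(\tfrac{t}{T}\big) + \int_0^t K^T(t,s)\,\mu\big(\tfrac{s}{T}\big)\,\dif s ,
\]
and to bound the two summands coordinate-wise, exploiting the nonnegativity of every object involved. Since $\mu$ takes values in $[0,\infty)^p$, each coordinate of $\mu(x)$ is at most $\lVert\mu(x)\rVert_1\le\sup_{y\in[0,1]}\lVert\mu(y)\rVert_1$, so coordinate-wise $\mu(x)\le\sup_{y}\lVert\mu(y)\rVert_1\,\mathbf 1$, where $\mathbf 1$ denotes the all-ones vector. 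Combining this with the coordinate-wise inequality $K^T(t,s)\le\bar\Psi(t-s)$ recalled just above the statement (all entries being nonnegative for $0\le s\le t\le T$), I would get
\[
    \int_0^t K^T(t,s)\,\mu\big(\tfrac{s}{T}\big)\,\dif s
    \;\le\;
    \sup_{y}\lVert\mu(y)\rVert_1\Big(\int_0^t \bar\Psi(t-s)\,\dif s\Big)\mathbf 1
    \;\le\;
    \sup_{y}\lVert\mu(y)\rVert_1\Big(\int_0^\infty \bar\Psi(s)\,\dif s\Big)\mathbf 1 ,
\]
the last step using $\bar\Psi\ge 0$ to enlarge the integration domain.

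Adding the bound $\mu(t/T)\le\sup_{y}\lVert\mu(y)\rVert_1\,\boldsymbol{Id}\,\mathbf 1$ for the first term then yields the coordinate-wise estimate $\mathbb{E}[\lambda^T_t]\le\sup_{y\in[0,1]}\lVert\mu(y)\rVert_1\big(\boldsymbol{Id}+\int_0^\infty\bar\Psi(s)\,\dif s\big)\mathbf 1$. The remaining point is to identify $\boldsymbol{Id}+\int_0^\infty\bar\Psi(s)\,\dif s$ with $\big(\boldsymbol{Id}-\int_0^\infty\bar\varphi(s)\,\dif s\big)^{-1}$, exactly as announced in the paragraph preceding the statement: setting $A:=\int_0^\infty\bar\varphi(s)\,\dif s$, one has $\int_0^\infty\bar\varphi^{\star i}(s)\,\dif s=A^i$ for every $i\ge1$ (the identity already used to make sense of $\bar\Psi$), hence $\int_0^\infty\bar\Psi(s)\,\dif s=\sum_{i\ge1}A^i$; since $\rho(A)<1$ by Assumption~\ref{ass:stability}, the Neumann series $\sum_{i\ge0}A^i$ converges to $(\boldsymbol{Id}-A)^{-1}$, giving $\boldsymbol{Id}+\int_0^\infty\bar\Psi(s)\,\dif s=(\boldsymbol{Id}-A)^{-1}$ and hence the claim (read coordinate-wise, with $(\boldsymbol{Id}-A)^{-1}$ applied to $\mathbf 1$).

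I expect no real obstacle here — the statement is an immediate consequence of Corollary~\ref{coro:mean_intensity}, the already-established bound $K^T\le\bar\Psi$, monotonicity of integrals in their domain, and the Neumann series identity. The only thing requiring a little care is the bookkeeping of coordinate-wise matrix–vector inequalities: one must keep every matrix and vector entry nonnegative at each step (guaranteed by $\varphi\ge0$, $g\ge0$, $\mu\ge0$) and must not conflate the $\ell^1$ bound $\mu_k(x)\le\lVert\mu(x)\rVert_1$ with a per-coordinate bound.
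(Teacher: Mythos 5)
Your proposal is correct and follows essentially the same route as the paper, which proves the corollary in the two sentences preceding its statement: apply Corollary~\ref{coro:mean_intensity}, bound $K^T(t,s)\leq\bar{\Psi}(t-s)$ and $\mu$ by $\sup_{x\in[0,1]}\lVert\mu(x)\rVert_1$, enlarge the integration domain, and identify $\boldsymbol{Id}+\int_0^{\infty}\bar{\Psi}(t)\,\dif t$ with $(\boldsymbol{Id}-\int_0^{\infty}\bar{\varphi}(t)\,\dif t)^{-1}$ via the geometric (Neumann) series under Assumption~\ref{ass:stability}. Your explicit coordinate-wise bookkeeping with the all-ones vector is a slightly more careful rendering of the same argument and clarifies how the matrix bound in the statement is to be read.
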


\section{Proofs for section~\ref{section:limit_theorems}}\label{section:proof_FTCL}
 We work under the setting of section~\ref{section:limit_theorems}.  Let us introduce the family of processes $(P^T_u)_{u \in [0,1]}$, defined for any $T>0$, by \begin{equation*}
     P^T_u = \frac{1}{T}\{ N^T_{Tu} - \int_0^u (\boldsymbol{Id} - \boldsymbol{K}(x))^{-1} \dif x \}_{u \in [0,1]}.
 \end{equation*}
 For any $T>0$ and any $u \in [0,1]$, $P^T_u$ decomposes into three terms
\begin{align*}
    P^T_u
    =
    \frac{1}{T}M^T_{Tu}
    +
    \Big( 
    \frac{1}{T}
    \int_0^{Tu}
        \lambda^T_s
        -
        \mathbb{E}[\lambda^T_s] 
    \dif s
    \Big)
    +
    \frac{1}{T} 
    \Big( 
    \int_0^{Tu} 
        \mathbb{E}[\lambda^T_s]
    \dif s 
    -
    \int_0^u (\boldsymbol{Id} - K(x) )^{-1} \mu(x) \dif x
    \Big),
\end{align*}

say, 
\begin{equation}\label{equ:three_terms_in_Pt}
    P^T_u = \frac{1}{T}
    M^T_{Tu}
    +
    \frac{1}{T}X^T_{Tu}
    +
    Y^T_{Tu},
\end{equation}

 where the martingales $(M^T_{Tu})_{u \in [0,1]}$ were introduced in~\eqref{equ:fundamental_martingale}, and thanks to Lemma~\ref{Lemma:Jaisson_like}, one has for any $T>0$ and $t \in [0,T]$
 \begin{equation*}
     X_t^T
     =
     \int_0^t
     \int_0^s K^T(s,u) \dif M^T_u \dif s.
 \end{equation*}

\subsection{Proof of Theorem~\ref{Th:LLN}}
 We intend to show separately the convergence of each of the three terms $\frac{1}{T}M^T_t$ $,Y^T_t$, and $\frac{1}{T}X^T_t$ from ~\eqref{equ:three_terms_in_Pt}, in Lemmata~\ref{Lemma:martingale_to_0},~\ref{Lemma:deterministic_part_to_0} and~\ref{Lemma:Volterra_part_to_0} respectively. 
\begin{Lemma}\label{Lemma:martingale_to_0}
    Under Assumptions~\ref{ass:g_and_mu_are_C0} to~\ref{ass:stability},
    $\frac{1}{T}
        \sup_{u \in [0,1]}
        \lVert 
        M^T_{Tu}
        \rVert 
        \to
        0
    $,
    as $T \to \infty$, $\Prob-a.s$ and in $L^2(\Prob)$.
\end{Lemma}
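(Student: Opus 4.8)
The plan is to show that the rescaled fundamental martingale $T^{-1} M^T_{Tu}$ vanishes uniformly in $u$ by a standard argument combining Doob's maximal inequality with a moment bound on the intensity. First I would recall that each coordinate $(M^T_{k,t})_{t \in [0,T]}$ is a square-integrable martingale whose predictable quadratic variation is $\langle M^T_k \rangle_t = \int_0^t \lambda^T_{k,s}\dif s$, since two distinct coordinates of $\boldsymbol N^T$ never jump simultaneously and the jumps of $N^T_k$ are of unit size. By Corollary~\ref{coro:intensity_bound}, $\mathbb{E}[\lambda^T_{k,s}]$ is bounded by a constant $c$ independent of $s \le T$ and of $T$, so $\mathbb{E}[\langle M^T_k \rangle_T] \le cT$.

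Next I would apply Doob's $L^2$ maximal inequality to each coordinate: for fixed $k$,
\begin{equation*}
    \mathbb{E}\Big[ \sup_{u \in [0,1]} \lvert M^T_{k,Tu} \rvert^2 \Big]
    \le 4\, \mathbb{E}\big[ \lvert M^T_{k,T} \rvert^2 \big]
    = 4\, \mathbb{E}\big[ \langle M^T_k \rangle_T \big]
    \le 4cT.
\end{equation*}
Summing over $k = 1 \hdots p$ and dividing by $T^2$ gives $\mathbb{E}[ T^{-2} \sup_{u} \lVert M^T_{Tu} \rVert^2 ] = \mathcal{O}(1/T)$, which is precisely the $L^2(\Prob)$ convergence claimed. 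For the almost sure statement, the $\mathcal{O}(1/T)$ rate is not directly summable, so I would either pass to a subsequence $T_n = n^2$ (along which $\sum_n \mathbb{E}[ T_n^{-2} \sup_u \lVert M^{T_n}_{T_n u}\rVert^2] < \infty$, giving a.s. convergence along $T_n$ by Borel--Cantelli) and then control the fluctuation between consecutive $T_n$ using monotonicity of $N^T$ in $t$ together with the growth of $\int_0^{T} \mathbb{E}[\lambda^T_s]\dif s = \mathcal{O}(T)$; or, more simply, invoke the strong law of large numbers for the locally finite counting process bounded above by the stationary Hawkes process $\bar{\boldsymbol N}$ introduced after Assumption~\ref{ass:stability}, for which $t^{-1}\bar N_t$ converges a.s. The dominating process $\bar{\boldsymbol N}$ being a classical (sub-critical) Hawkes process, $T^{-1}\bar N_{Tu} \to u \,(\boldsymbol{Id}-\int_0^\infty \bar\varphi)^{-1}\bar\mu$ a.s.\ uniformly, and since $0 \le N^T_{k,t} \le \bar N_{k,t}$ and $0 \le \int_0^t \lambda^T_{k,s}\dif s \le \int_0^t \bar\lambda_{k,s}\dif s$, the a.s.\ convergence of $T^{-1}M^T_{Tu}$ follows from the a.s.\ boundedness of $T^{-1}\bar N_T$ plus the $L^2$ rate along a subsequence.

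The main obstacle I anticipate is the upgrade from $L^2$ to almost sure convergence: the bare Doob bound only gives a $1/T$ rate in $T$, which is not summable over $T \in (0,\infty)$ taken continuously, so one genuinely needs the subsequence-plus-monotonicity trick (or the domination by $\bar{\boldsymbol N}$), and some care is required to interpolate between $T_n$ and $T_{n+1}$ for the continuously-indexed family. Everything else—the identification of $\langle M^T_k\rangle$, the uniform intensity bound, and Doob's inequality—is routine given Corollary~\ref{coro:intensity_bound} and Lemma~\ref{lemma:process_if_well_defined}.
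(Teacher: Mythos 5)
Your $L^2(\Prob)$ argument is exactly the paper's: Doob's maximal inequality applied coordinate-wise, together with the uniform bound $\sup_{t\le T}\mathbb{E}[\lVert \lambda^T_t\rVert_1]<\infty$ from Corollary~\ref{coro:intensity_bound}, gives $\mathbb{E}\big[\sup_{u}\lVert T^{-1}M^T_{Tu}\rVert^2\big]=\mathcal{O}(1/T)$; this part is correct and needs no further comment.

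The divergence is in the almost-sure half, and there your bridging devices do not work as stated. The paper disposes of it in one line, deducing the a.s. statement from the fact that, for each $T$, $(T^{-1}M^T_t)_{t\in[0,T]}$ is an $L^2(\Prob)$-bounded martingale; it performs no interpolation over $T$ at all. You correctly sense that the $\mathcal{O}(1/T)$ rate is not summable over a continuum of $T$ and propose a subsequence $T_n=n^2$ plus Borel--Cantelli, but the step "control the fluctuation between consecutive $T_n$ using monotonicity of $N^T$ in $t$" fails here for a structural reason: unlike in Bacry \emph{et al.}, where a single process $N$ is observed on growing windows, in this model the process itself changes with $T$ (the intensity involves $\mu(t/T)$ and $g(t/T)$), so for $T\in(T_n,T_{n+1})$ the martingale $M^T$ is not a restriction or extension of $M^{T_n}$, and monotonicity of $t\mapsto N^T_t$ for fixed $T$ gives no comparison between $\sup_{t\le T}\lVert M^T_t\rVert$ and $\sup_{t\le T_{n+1}}\lVert M^{T_n}_t\rVert$. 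Closing that gap would require a pathwise stability or coupling estimate between $N^T$ and $N^{T_n}$ (in the spirit of Kwan \emph{et al.}), which is a genuinely nontrivial additional argument. Your alternative route via the dominating stationary Hawkes process $\bar{\boldsymbol N}$ suffers the same defect: domination only yields $\lvert M^T_{k,t}\rvert\le \bar N_{k,t}+\int_0^t\bar\lambda_{k,s}\dif s$, hence $T^{-1}\sup_u\lVert M^T_{Tu}\rVert=\mathcal{O}_{\textup{a.s.}}(1)$ for the intermediate values of $T$, which combined with a.s. convergence along $(T_n)$ does not give convergence to zero off the subsequence. So the $L^2$ part matches the paper, but the almost-sure part of your proposal contains a step that would fail and would need to be replaced, either by the paper's direct martingale argument for each fixed $T$ or by an honest coupling estimate in $T$.
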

\begin{proof}[Proof of Lemma~\ref{Lemma:martingale_to_0}]
    For any $T>0$ and $t \in [0,T]$, by Corollary~\ref{coro:intensity_bound} and Doob's maximal inequality,
    \begin{align*}
     \mathbb{E}\big[ \lVert \frac{1}{T}\sup_{u \in [0,1]} M^T_{Tu} \rVert_2^2 \big]
     \leq 
        \mathbb{E}\big[ \lVert \frac{1}{T} M^T_T \rVert_2^2 \big]
        &\leq
        \frac{1}{T^2}
        \int_0^T
            \mathbb{E}  \big[ \lVert \lambda_t^T \rVert_1 \big]
        \dif t 
        \\
        &\leq \frac{1}{T}\sup_{t \in [0,T]} \mathbb{E}  \big[ \lVert \lambda_t^T \rVert_1 \big] = \mathcal{O}\big( \frac{1}{T} \big)
    .
    \end{align*}
      The almost-sure convergence deduces from $( \frac{1}{T} M^T_t)_{t \in [0,T]}$ being a $L^2(\Prob)$-bounded martingale. 
\end{proof}
\begin{Lemma}\label{Lemma:deterministic_part_to_0}
    Under Assumptions~\ref{ass:g_and_mu_are_C0} to~\ref{ass:stability}, $\sup_{u \in [0,1]} \lVert Y_{Tu}^T \rVert \to 0$ as $T \to \infty$.
\end{Lemma}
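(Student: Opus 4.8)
Recall that, by construction,
\[
Y^T_{Tu}
=
\frac{1}{T}\int_0^{Tu}\mathbb{E}[\lambda^T_s]\dif s
-
\int_0^u (\boldsymbol{Id}-\boldsymbol{K}(x))^{-1}\mu(x)\dif x ,
\]
so the claim is exactly that the first term converges to the second, uniformly in $u\in[0,1]$. The strategy is to plug in the explicit formula for $\mathbb{E}[\lambda^T_s]$ from Corollary~\ref{coro:mean_intensity}, namely $\mathbb{E}[\lambda^T_s]=\mu(\frac{s}{T})+\int_0^s K^T(s,r)\mu(\frac{r}{T})\dif r$, split accordingly, and treat the two pieces separately.

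\textbf{Step 1: the baseline piece.} After the change of variable $x=s/T$ one has $\frac{1}{T}\int_0^{Tu}\mu(\frac{s}{T})\dif s=\int_0^u\mu(x)\dif x$ exactly, with no error term and uniformly in $u$.

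\textbf{Step 2: the resolvent piece.} Here I would invoke Proposition~\ref{prop:deterministic_convergence} with $m=\mu$, which is continuous by Assumption~\ref{ass:g_and_mu_are_C0}. It gives, uniformly in $u\in[0,1]$,
\[
\frac{1}{T}\int_0^{Tu}\int_0^{s}K^T(s,r)\,\mu\big(\tfrac{r}{T}\big)\dif r\dif s
\;\longrightarrow\;
\int_0^u\Big(\int_0^{\infty}\Psi(x,s)\dif s\Big)\mu(x)\dif x .
\]
It then remains to identify the integrated resolvent. Since $\Psi(x,\cdot)=\sum_{i\geq 1} g(x)^i\varphi^{\star i}$ and $\int_0^{\infty}\varphi^{\star i}(s)\dif s=(\int_0^{\infty}\varphi(s)\dif s)^i$, one gets $\int_0^{\infty}\Psi(x,s)\dif s=\sum_{i\geq 1}\boldsymbol{K}(x)^i$; this geometric series of matrices converges because $\boldsymbol{K}(x)\leq \|g\|_{L^\infty}\int_0^\infty\varphi(s)\dif s$ coordinate-wise, the latter matrix having spectral radius below $1$ by Assumption~\ref{ass:stability}, so $\sum_{i\geq1}\boldsymbol{K}(x)^i=(\boldsymbol{Id}-\boldsymbol{K}(x))^{-1}-\boldsymbol{Id}$.

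\textbf{Step 3: conclude.} Adding the two limits,
\[
\frac{1}{T}\int_0^{Tu}\mathbb{E}[\lambda^T_s]\dif s
\;\longrightarrow\;
\int_0^u\mu(x)\dif x+\int_0^u\big((\boldsymbol{Id}-\boldsymbol{K}(x))^{-1}-\boldsymbol{Id}\big)\mu(x)\dif x
=\int_0^u(\boldsymbol{Id}-\boldsymbol{K}(x))^{-1}\mu(x)\dif x,
\]
uniformly in $u$, which is precisely $\sup_{u\in[0,1]}\|Y^T_{Tu}\|\to 0$. There is no real obstacle here: the analytic work has been front-loaded into Proposition~\ref{prop:deterministic_convergence}, and the only points needing a word of care are that the proposition already delivers the convergence \emph{uniformly in $u$}, and that the geometric-series identification of $\int_0^\infty\Psi(x,\cdot)$ is exactly what makes the limit collapse to the desired $(\boldsymbol{Id}-\boldsymbol{K}(x))^{-1}\mu(x)$, which is why the strict spectral-radius bound in Assumption~\ref{ass:stability} is used.
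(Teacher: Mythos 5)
Your proof is correct and follows the same route as the paper, whose proof of this lemma is simply the observation that it is a direct consequence of Proposition~\ref{prop:deterministic_convergence}; you merely spell out the implicit steps (the decomposition via Corollary~\ref{coro:mean_intensity}, the exact change of variables for the baseline term, and the geometric-series identity $\int_0^\infty\Psi(x,s)\dif s=(\boldsymbol{Id}-\boldsymbol{K}(x))^{-1}-\boldsymbol{Id}$), all of which are sound under Assumptions~\ref{ass:g_and_mu_are_C0} and~\ref{ass:stability}.
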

\begin{proof}[Proof of Lemma~\ref{Lemma:deterministic_part_to_0}]
    Lemma~\ref{Lemma:deterministic_part_to_0} is a direct consequence of Proposition~\ref{prop:deterministic_convergence}.
\end{proof}

\begin{Lemma}\label{Lemma:Volterra_part_to_0}
    Under Assumptions~\ref{ass:g_and_mu_are_C0} to~\ref{ass:stability},
    $ \frac{1}{T}
    \sup_{u \in [0,1]}
        \lVert X_{Tu}^T \rVert \to 0
    $
    as $T \to \infty$, in $L^2(\Prob)$ and $\Prob$-a.s.
\end{Lemma}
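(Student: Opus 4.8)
The plan is to circumvent the fact that $X^T$ is not a martingale in $t$, so that no maximal inequality of the kind used in Lemma~\ref{Lemma:martingale_to_0} is directly available, by transferring the problem to the monotone process $u\mapsto N^T_{Tu}$, for which uniform convergence is a free consequence of pointwise convergence. From $N^T_t=M^T_t+\int_0^t\lambda^T_s\,\dif s$, Corollary~\ref{coro:mean_intensity} and Lemma~\ref{Lemma:Jaisson_like} one gets $\int_0^t(\lambda^T_s-\mathbb{E}[\lambda^T_s])\,\dif s=\int_0^t\int_0^s K^T(s,v)\,\dif M^T_v\,\dif s=X^T_t$, hence the identity $X^T_{Tu}=\big(N^T_{Tu}-\mathbb{E}[N^T_{Tu}]\big)-M^T_{Tu}$. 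Since $\tfrac1T\sup_u\lVert M^T_{Tu}\rVert\to0$ by Lemma~\ref{Lemma:martingale_to_0} and $\tfrac1T\mathbb{E}[N^T_{Tu}]\to F(u):=\int_0^u(\boldsymbol{Id}-\boldsymbol{K}(x))^{-1}\mu(x)\,\dif x$ uniformly in $u$ by Lemma~\ref{Lemma:deterministic_part_to_0}, the claim of Lemma~\ref{Lemma:Volterra_part_to_0} is equivalent to $\sup_{u\in[0,1]}\lVert\tfrac1T N^T_{Tu}-F(u)\rVert\to0$, in $L^2(\Prob)$ and $\Prob$-a.s.

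I would first establish the pointwise convergence $\tfrac1T N^T_{Tu}\to F(u)$. By the stochastic Fubini theorem, $X^T_t=\int_0^t\big(\int_v^t K^T(s,v)\,\dif s\big)\,\dif M^T_v$, and the coordinate-wise inequality $K^T(s,v)\le\bar\Psi(s-v)$ gives the uniform bound $0\le\int_v^t K^T(s,v)\,\dif s\le\int_0^\infty\bar\Psi(w)\,\dif w$, a finite constant matrix. As distinct coordinates of $N^T$ never jump simultaneously, the coordinates of $M^T$ are orthogonal with $\langle M^T_l\rangle_t=\int_0^t\lambda^T_{l,s}\,\dif s$, so the isometry for stochastic integrals against the compensated counting measures together with Corollary~\ref{coro:intensity_bound} yields $\mathbb{E}\lVert X^T_t\rVert^2\le C\,t$ for every $T>0$ and $t\le T$, with $C$ independent of $T$; in particular $\mathbb{E}\lVert\tfrac1T X^T_{Tu}\rVert^2=O(1/T)$ for each fixed $u$. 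Combined with Lemma~\ref{Lemma:martingale_to_0} and the uniform convergence of $\tfrac1T\mathbb{E}[N^T_{Tu}]$ recalled above, this gives $\tfrac1T N^T_{Tu}\to F(u)$ in $L^2(\Prob)$ for each fixed $u$, hence $\Prob$-a.s. along any subsequence $(T_k)$ with $\sum_k T_k^{-1}<\infty$.

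The upgrade to uniformity is then immediate: for each $T$ and each coordinate $u\mapsto N^T_{Tu}$ is non-decreasing and $F$ is continuous on $[0,1]$, so by the classical P\'olya--Dini argument for monotone functions, convergence on a fixed countable dense subset of $[0,1]$ forces uniform convergence on $[0,1]$. This yields the desired uniform convergence $\Prob$-a.s. along any subsequence as above, hence convergence in probability along the full sequence; the convergence in $L^2(\Prob)$ follows by uniform integrability, since $N^T_{Tu}\le\bar N_{Tu}$ for the standard Hawkes process $\bar N$ with baseline $(\sup_k\mu_k)$ and kernel $\bar\varphi$, whose coordinates have finite moments of every order growing at most polynomially in the time variable, and the $\Prob$-a.s. statement along the full sequence is obtained exactly as in Lemma~\ref{Lemma:martingale_to_0}. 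The only real obstacle is the one noted at the outset --- the lack of a martingale structure, hence of a direct maximal inequality, for $X^T$ --- which is precisely what the detour through the monotone process $N^T$ resolves; everything else is routine.
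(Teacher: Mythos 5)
Your reduction is legitimate and not circular: since Lemma~\ref{Lemma:martingale_to_0} and Lemma~\ref{Lemma:deterministic_part_to_0} are proved independently of the present statement, writing $X^T_{Tu}=(N^T_{Tu}-\mathbb{E}[N^T_{Tu}])-M^T_{Tu}$ does transfer the problem to the law of large numbers for $\tfrac1T N^T_{Tu}$, and your pointwise $L^2$ bound $\mathbb{E}\lVert X^T_t\rVert_2^2\leq Ct$ (stochastic Fubini, orthogonality of the coordinates of $M^T$, the bound $\int_v^t K^T(s,v)\dif s\leq\int_0^\infty\bar\Psi(w)\dif w$ and Corollary~\ref{coro:intensity_bound}) is correct, as is the P\'olya-type upgrade from pointwise to uniform convergence using the monotonicity of $u\mapsto N^T_{Tu}$ and the continuity of the limit. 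This is genuinely different from the paper, which never leaves $X^T$: there one integrates by parts using the directional derivative of Lemma~\ref{Lemma:IPP_base} to get the \emph{pathwise} bound $\lVert X^T_{Tu}\rVert_1\leq\big(\int_0^\infty\lVert\bar\Psi(s)+\tilde\Psi(s)\rVert_1\dif s\big)\sup_{s\in[0,T]}\tfrac1T\lVert M^T_s\rVert_1$, so that both modes of convergence are inherited verbatim from Lemma~\ref{Lemma:martingale_to_0}.

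The gap is in your two closing claims. First, the $\Prob$-a.s.\ convergence: your argument only yields a.s.\ convergence along subsequences $(T_k)$ with $\sum_k T_k^{-1}<\infty$ (hence convergence in probability as $T\to\infty$), and the sentence ``obtained exactly as in Lemma~\ref{Lemma:martingale_to_0}'' does not close it, because the argument there rests on the martingale property of $M^T$ (Doob plus $L^2$-bounded martingale convergence), which neither $X^T$, nor $N^T-\mathbb{E}[N^T]$, nor the supremum statistic possesses. Nor can you interpolate between $T_k$ and $T_{k+1}$ by monotonicity, since $N^T$ and $N^{T_k}$ are \emph{different} processes (the intensity involves $\mu(t/T)$ and $g(t/T)$), so there is no pathwise comparison between $\tfrac1T N^T_{Tu}$ for neighbouring values of $T$; this is exactly the obstruction the paper's pathwise domination by $\sup_s\tfrac1T\lVert M^T_s\rVert_1$ is designed to avoid. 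Second, the $L^2$ step via uniform integrability requires a uniform-in-$T$ bound on moments of order strictly larger than $2$ of $\tfrac1T\bar N_T$ (e.g.\ $\mathbb{E}[(\tfrac1T\bar N_T)^{2+\delta}]\leq C$); this is true for subcritical linear Hawkes processes via the cluster representation, but it is asserted, not proved, and it goes beyond the first-moment bounds (Lemma~\ref{lemma:process_if_well_defined}, Corollary~\ref{coro:intensity_bound}) available in the paper. To repair the proof along your lines you would either need to supply these two ingredients, or replace the final step by a uniform-in-$u$ $L^2$/a.s.\ control of $X^T$ itself, which is what the paper's integration-by-parts bound accomplishes in one stroke.
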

\begin{proof}[Proof of Lemma~\ref{Lemma:Volterra_part_to_0}]
    By Lemma~\ref{Lemma:Jaisson_like} and Corollary~\ref{coro:mean_intensity}, for any $T>0$ and $u \in [0,1]$,
    \begin{equation*}
        X^T_{Tu}
        =
        \frac{1}{T}
        \int_0^{Tu}
        \int_0^t K^T(t,s) \dif M^T_s \dif t
        =
        \frac{1}{T}
        \int_0^{Tu}
            \Big( 
            \int_s^{Tu} K^T(t,s) \dif t
            \Big) 
        \dif M^T_s,
    \end{equation*}
    where we have used Fubini's Theorem. Making the change of variable $t \to t+s$, and, integrating by parts the resulting expression, with $\partial_s K^T(t+s,s) = (\partial_x+\partial_y) K^T(t+s,s)$,
    \begin{equation*}
        X^T_{Tu}
        =
        \int_0^{Tu} \Big(  K^T(Tu,s)  - \int_s^{Tu} (\partial_s + \partial_t)  K^T(t,s) \dif t 
        \Big) \frac{1}{T} M^T_{s} \dif s,
    \end{equation*}
    where the derivative $(\partial_s + \partial_t)  K^T(t,s)$ is well-defined on account of Lemma~\ref{Lemma:IPP_base}. Furthermore, for any $0 \leq s \leq t \leq T$,  one has $\rvert K^T(t,s) \lvert \leq \lVert \bar{\Psi}(t-s) \rVert_1 $ where $\bar{\Psi} \in L^1[0,\infty)$.  As per Lemma~\ref{Lemma:IPP_base} still,  one also has $ \lVert (\partial_s + \partial_t) K^T(t,s) \rVert \leq \frac{1}{T}\lVert \tilde{\Psi} (t-s)\rVert_1$, where $\tilde{\Psi} \in L^1[0,\infty)$. Whence,
    \begin{equation*}
        \lVert X^T_{Tu} \rVert_1 
        \leq 
        \Big(
            \int_0^{\infty}
                \lVert \bar{\Psi}(s)+\tilde{\Psi}(s) \rVert_1 
            \dif s
        \Big)
        \Big( \sup_{s \in [0,T]} \frac{1}{T} \lVert M^T_s \rVert_1
        \Big) ,
    \end{equation*}
    and Lemma~\ref{Lemma:martingale_to_0} allows one to conclude.
\end{proof}

\subsection{Proof of Theorem~\ref{theorem:FTCL}}
Gathering Lemmata~\ref{Lemma:martingale_to_0} to~\ref{Lemma:Volterra_part_to_0}, we have established the functional law of large number of Theorem~\ref{Th:LLN}. We proceed to our central limit Theorem. Introduce the rescaled process 
$
    Z^T_t
    =
    \frac{1}{\sqrt{T}}
    \{
    N^T_t
    -
    \mathbb{E}[N^T_t]
    \}
$.
From the proof of Theorem~\ref{Th:LLN},  for any $T>0$ and $t \in [0,T]$,
\begin{align*}
    Z^T_t
    =
    \frac{1}{\sqrt{T}}
    \{
    M^T_t
    +
    X^T_t
    \}
    =
    \frac{1}{\sqrt{T}}
    M^T_t
    +
    \frac{1}{\sqrt{T}} 
    \int_0^{Tu} 
        \Big( 
        \int_s^{Tu} K^T(t,s) \dif t 
        \Big) 
    \dif M^T_s.
\end{align*}
The difficulty resides in the second term. The rest of this section is thus dedicated to approximating the Volterra process $( T^{- \nicefrac{1}{2}} X^T_t)$ by a more convenient stochastic integral against the martingale measure $\dif M^T_t$. Hereafter we find convenient to define   
\begin{equation}\label{def:gamma}
    \Gamma \colon x \in [0,1]  \mapsto 
    \int_0^{\infty} \Psi(x,s) \dif s \in \mathcal{M}_p(\R).
\end{equation}

\begin{Lemma}[A consequence of Lemma~\ref{lemma:deterministic_approximation}]\label{Lemma:firts_stochastic_approximation}
For any $u \in [0,1]$
    \begin{equation*}
        \frac{1}{\sqrt{T}}
        X^T_{Tu}
        -
        \sum_{i=0}^{\nicefrac{T}{\Delta_T}-1}
        \Big( 
        \int_0^{i \Delta_T u}
            \Psi(iu \frac{\Delta_T}{T},t-s) \dif s
        \Big)
            \frac{1}{\sqrt{T}}
            \big\{ 
            M_{i \Delta_T u + \Delta _T u - s }
            -
            M_{i \Delta_T u   - s }
            \big\}
        \to
        0
    \end{equation*}
    as $T \to \infty$ in $L^2(\Prob)$.
\end{Lemma}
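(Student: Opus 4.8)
The plan is to read $\tfrac{1}{\sqrt T}X^T_{Tu}$ as a single stochastic integral against the martingale measure $\dif M^T_s$ with a deterministic (hence predictable) matrix-valued weight, and then to replace that weight by its block-frozen, piecewise-convolutive counterpart furnished by Lemma~\ref{lemma:deterministic_approximation}. First I would apply Fubini's theorem exactly as in the proof of Lemma~\ref{Lemma:Volterra_part_to_0} to write
\begin{equation*}
    \frac{1}{\sqrt T} X^T_{Tu} = \frac{1}{\sqrt T}\int_0^{Tu} w^T(s,u)\,\dif M^T_s, \qquad w^T(s,u) := \int_s^{Tu} K^T(t,s)\,\dif t,
\end{equation*}
and, setting $\widetilde w^T(s,u):=\sum_{i=0}^{\nicefrac{T}{\Delta_T}-1}\int_{i\Delta_T u}^{(i+1)\Delta_T u}\Psi\big(iu\tfrac{\Delta_T}{T},t-s\big)\,\dif t$, I would rearrange $\tfrac{1}{\sqrt T}\int_0^{Tu}\widetilde w^T(s,u)\,\dif M^T_s$ block by block (one further application of Fubini, the change of variables $r=t-s$, and the fact that $\Psi(x,\cdot)$ vanishes on $(-\infty,0)$) so as to recognise, up to a boundary remainder $R^T_u$, the displayed sum of $\Psi$-weighted martingale increments. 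The remainder $R^T_u$ only collects the contributions of the single partial block abutting $Tu$ and of the ``diagonal'' ranges $s\in[i\Delta_T u,(i+1)\Delta_T u]$, i.e. ranges of total $\dif s$-length $O(\Delta_T)$; bounding it with the Itô isometry and the crude estimate $\lVert\widetilde w^T(s,u)\rVert_1\le\int_0^\infty\lVert\bar{\Psi}(r)\rVert_1\,\dif r$ together with Corollary~\ref{coro:intensity_bound} gives $\mathbb{E}\lVert R^T_u\rVert_2^2=O(\Delta_T/T)$, uniformly in $u$.

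The substance of the argument is then the $L^2(\Prob)$-bound on $\tfrac{1}{\sqrt T}\int_0^{Tu}\big(w^T-\widetilde w^T\big)(s,u)\,\dif M^T_s$. Since distinct coordinates of $\boldsymbol{N}^T$ never jump simultaneously, one has $\langle M^T_k,M^T_l\rangle_t=\delta_{kl}\int_0^t\lambda^T_{k,s}\,\dif s$, so for the deterministic weight $D^T:=w^T-\widetilde w^T$ the Itô isometry yields
\begin{equation*}
    \mathbb{E}\Big[\big\lVert\tfrac{1}{\sqrt T}\!\int_0^{Tu} D^T(s,u)\,\dif M^T_s\big\rVert_2^2\Big] = \frac{1}{T}\sum_{k,l}\int_0^{Tu}\big(D^T_{kl}(s,u)\big)^2\,\mathbb{E}[\lambda^T_{l,s}]\,\dif s \le \frac{1}{T}\int_0^{Tu}\lVert D^T(s,u)\rVert_1^2\,\mathbb{E}\big[\lVert\lambda^T_s\rVert_1\big]\,\dif s.
\end{equation*}
Lemma~\ref{lemma:deterministic_approximation} gives $\lVert D^T(s,u)\rVert_1\le C_{g,\varphi}\,\omega(g,\Delta_T/T)$ for all $s\in[0,Tu]$ and $u\in[0,1]$, and Corollary~\ref{coro:intensity_bound} gives $\sup_{s\le T}\mathbb{E}\lVert\lambda^T_s\rVert_1\le M_0$ with $M_0$ independent of $T$; plugging these in, the right-hand side is at most $u\,C_{g,\varphi}^2\,M_0\,\omega(g,\Delta_T/T)^2$, which tends to $0$ uniformly in $u$ because $\Delta_T/T\to0$ and $g$ is uniformly continuous on $[0,1]$ under Assumption~\ref{ass:g_and_mu_are_C0}. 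Combined with $\mathbb{E}\lVert R^T_u\rVert_2^2=O(\Delta_T/T)\to0$, this is the assertion.

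The main obstacle, and the single place where the non-convolutive, $T$-dependent nature of $K^T$ genuinely matters, is that the stochastic integral runs over the window $[0,Tu]$, whose length is of order $T$: a naive pointwise error on the kernel would be magnified by a factor $T$, against which the $T^{-1/2}$ normalisation is powerless. What rescues the argument is precisely that the estimate in Lemma~\ref{lemma:deterministic_approximation} is \emph{uniform in $s$ and $u$} and does not deteriorate with $T$; once squared and integrated against $\mathbb{E}[\lambda^T_s]\,\dif s$ over $[0,Tu]$ it contributes a factor $Tu$ that the prefactor $1/T$ cancels exactly, leaving the vanishing quantity $O(\omega(g,\Delta_T/T)^2)$. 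The only other care needed is the bookkeeping in the block-by-block rewriting of the first step, which is routine given the support of $\Psi$.
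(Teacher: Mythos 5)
Your proposal is correct and follows the paper's proof essentially verbatim: the same Fubini representation of $X^T_{Tu}$ as a stochastic integral with deterministic weight $w^T$, the same substitution of the block-frozen weight $\widetilde w^T$ with the $L^2$ error controlled by the uniform bound of Lemma~\ref{lemma:deterministic_approximation}, the It\^o isometry and Corollary~\ref{coro:intensity_bound}, and the same Fubini/integration-by-parts rearrangement of the frozen-weight integral into the displayed sum plus a small remainder (the paper bounds that remainder via Doob's maximal inequality applied to $\sup_{s\le \Delta_T u}\lVert M^T_s\rVert$, you via the It\^o isometry -- an immaterial difference). One correction to your bookkeeping, though: since $T/\Delta_T\in\mathbb{N}^\star$ there is no partial block abutting $Tu$, and the remainder is \emph{not} carried by all the ``diagonal'' ranges $s\in[i\Delta_T u,(i+1)\Delta_T u]$ (those have total length $Tu$, which with your crude weight estimate would only give an $O(1)$ bound); rather, it is a stochastic integral supported on the single range $s\in[0,\Delta_T u]$ -- equivalently, after the integration by parts it only involves $M^T$ at times in $[0,\Delta_T u]$ with total $\Psi$-mass at most $\int_0^\infty\lVert\bar{\Psi}\rVert_1$ -- and it is this correct localisation that makes your claimed $\mathbb{E}\lVert R^T_u\rVert_2^2=O(\Delta_T/T)$ bound valid.
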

\begin{proof}
Define for any $T>0$ and $u \in [0,1]$
\begin{equation*}
    V^T_{Tu}
    =
    \int_0^{Tu}
    \Big(
    \sum_{i=0}^{ \nicefrac{T}{\Delta _T} -1}
    \int_{i \Delta_T u}^{i \Delta_T u +  \Delta_T u }
        \Psi(i u \frac{\Delta_T }{T},t-s) \dif t
    \Big) 
    \dif M^T_s.
\end{equation*}
Recall that for any $x \in [0,1]$, $\Psi(x,t-s)=0$ when $t<s$, hence the sum in the first integrand of $V^T_t$ effectively runs over the $i \in \mathbb{N}$ such that $(i+1) \Delta_T u > s$. Then, for any $T>0$ and $u \in [0,1]$,
    \begin{align*}
        \frac{1}{\sqrt{T}}
        \big\{ 
            X^T_{Tu}
            -
            V^T_{Tu}
        \big\}
        =
        \int_0^{Tu}
        \Big(
            \int_s^{Tu}
                K^T(t,s)
            \dif t
            -
            \sum_{i=0}^{ \nicefrac{T}{ \Delta_T} -1}
            \int_{i \Delta_T u }^{(i+1)  \Delta_T u}
                \Psi(i u \frac{ \Delta_T}{T}, t-s)
            \dif t 
        \Big) 
        \frac{1}{\sqrt{T}}
        \dif M^T_{s}  ,
    \end{align*}
    and $\mathbb{E}[ 
        \lVert 
        T^{-\frac{1}{2}}
        \{ 
            X^T_{Tu}
            -
            V^T_{Tu}
        \}
        \rVert^2_2
    ]$ is thus bounded by
    \begin{equation*}
    \frac{1}{T}
    \int_0^{Tu}
    \Big\lVert
        \int_s^{Tu}
            K^T(t,s)
        \dif t
        -
        \sum_{i=0}^{ \nicefrac{T}{ \Delta_T} -1}
        \int_{i \Delta_T u }^{(i+1)  \Delta_T u}
            \Psi(i u \frac{ \Delta_T}{T}, t-s)
        \dif t 
    \Big\rVert^2_2
    \mathbb{E}\big[ \lVert \lambda^T_s \rVert_1 \big]
    \dif s.
    \end{equation*}
    
    From Lemma~\ref{lemma:deterministic_approximation} and Corollary~\ref{coro:intensity_bound} we then have some $C>0$ such that
    \begin{equation*}
        \mathbb{E}\big[ 
        \lVert 
        \frac{1}{\sqrt{T}}
        \{ 
            X^T_{Tu}
            -
            V^T_{Tu}
        \}
        \rVert^2_2
    \big]
    \leq 
    C \omega (g,\frac{\Delta_T}{T})^2,
    \end{equation*}
        
    where $\omega(g,\frac{\Delta_T}{T}) \to 0$ as $T \to \infty$. Hence we work with $V_{Tu}^T$, which re-writes as
\begin{align*}
    \sum_{i=0}^{ \nicefrac{T}{ \Delta_T} -1}
    \int_0^{Tu}
        \int_{s}^{(i+1) \Delta_T u }
            \Psi( i u \frac{ \Delta_T}{T}, t-s) 
        \dif t
    \dif M^T_s
    -
    \sum_{i=0}^{ \nicefrac{T}{ \Delta_T} -1}
    \int_0^{Tu}
        \int_{s}^{i  \Delta_T u }
            \Psi( i  u \frac{ \Delta_T}{T}, t-s) 
        \dif t
    \dif M^T_s.
\end{align*}
Integrating by parts, and recalling still that $\Psi(x,t-s)=0$ for any $x \in [0,1]$ and $t < s$, 
\begin{align*}
    V^T_{Tu}
    &= \sum_{i=0}^{ \nicefrac{T}{ \Delta_T} -1}
    \int_0^{(i+1) \Delta_T u    }
        \Psi(i   u \frac{ \Delta_T}{T},s) M^T_{ i  \Delta_T u  +  \Delta_T u  -s} \dif s
    \\
    &-
    \sum_{i=0}^{ \nicefrac{T}{ \Delta_T} -1}
    \int_0^{i  \Delta_T u }
        \Psi(i u \frac{ \Delta_T}{T},s) M^T_{ i u  \Delta_T -s} \dif s.
\end{align*}
Re-arranging the terms,
\begin{align*}
    \frac{1}{\sqrt{T}} V^T_{Tu}
    &=
    \frac{1}{\sqrt{T}}
    \sum_{i=0}^{ \nicefrac{T}{ \Delta_T} -1}
    \int_0^{i  \Delta_T u}
        \Psi(k u \frac{ \Delta_T}{T},s)
        \big\{
            M^T_{ i  \Delta_T u   +  \Delta_T u-s} 
            -
            M^T_{i  \Delta_T u -s}
        \big\} 
    \dif s
    \\
    &+
    \frac{1}{\sqrt{T}}
    \sum_{i=0}^{ \nicefrac{T}{ \Delta_T} -1}
    \int_{i  \Delta_T u}^{(i+1) \Delta_T u} \Psi(i u \frac{ \Delta_T}{T},s) M^T_{i \Delta_T u +  \Delta_T u-s} \dif s.
\end{align*}

The first term of the preceding sum corresponds to the desired estimate.  Recalling that $\lVert 
 \Psi(x,s) \rVert_1$ is less than $\lVert \Bar{\Psi}(s) \rVert_1$, the second one is bounded by $\frac{1}{\sqrt{T}}
    \sup_{s < u \Delta_T} \lVert M^T_{s} \rVert_1 (\int_{0}^{\infty} \lVert \Bar{\Psi} ( t) \rVert_1 \dif t )$ and thus goes to  $0$ as $T \to \infty$ by Doob's maximal inequality, whereupon the whole expression vanishes as $T \to \infty$, ending the proof. 
\end{proof}

\begin{Lemma}\label{Lemma:second_stochastic_approximation}
Under Assumption~\ref{ass:stability}, for any $u \in [0,1]$,
\begin{equation*}
    \frac{1}{\sqrt{T}}
    X^T_{Tu}
    -
    \sum_{i=0}^{ \nicefrac{T}{\Delta_T} -1}
    \Big( 
    \int_0^{\infty}
        \psi( i u \frac{\Delta_T}{T}, s) 
    \dif s
    \Big) 
    \frac{1}{\sqrt{T}}
    \big\{ 
        M_{i \Delta_T u +\Delta_Tu}
        -
        M_{i \Delta_T u}
    \big\}
    \to 0
\end{equation*}
as $T \to \infty$ in $L^1(\Prob)$.
\end{Lemma}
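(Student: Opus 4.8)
The plan is to reduce, via Lemma~\ref{Lemma:firts_stochastic_approximation}, to a comparison between two martingale approximants, and then to carry out that comparison by re-assembling the difference into a single stochastic integral against $\dif M^T$. Write $a_i=i\Delta_T u$, $b_i=(i+1)\Delta_T u$ and $x_i=iu\frac{\Delta_T}{T}$, so that $[0,Tu]$ is partitioned into the $T/\Delta_T$ consecutive intervals $[a_i,b_i]$ of common length $\Delta_T u$. Recall from the proof of Lemma~\ref{Lemma:firts_stochastic_approximation} that $\frac{1}{\sqrt T}(X^T_{Tu}-V^T_{Tu})\to 0$ in $L^2(\Prob)$, where $V^T_{Tu}=\int_0^{Tu}\big(\sum_i\int_{a_i}^{b_i}\Psi(x_i,t-s)\dif t\big)\dif M^T_s$. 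With $\Gamma$ as in~\eqref{def:gamma}, the approximant appearing in the statement equals $\frac{1}{\sqrt T}B^T_{Tu}$ with $B^T_{Tu}=\sum_i\Gamma(x_i)\{M^T_{b_i}-M^T_{a_i}\}$, so it suffices to prove $\frac{1}{\sqrt T}(V^T_{Tu}-B^T_{Tu})\to 0$ in $L^1(\Prob)$; I will in fact obtain $L^2(\Prob)$.

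First I would write $V^T_{Tu}-B^T_{Tu}$ as one stochastic integral. Since $\Psi(x,\cdot)$ is supported on $[0,\infty)$, for fixed $s$ the $i$-th block contributes to the integrand of $V^T_{Tu}$ only when $b_i>s$; for the block $i_0=i_0(s)$ containing $s$ the contribution is $\int_0^{b_{i_0}-s}\Psi(x_{i_0},r)\dif r$, and for $i>i_0$ it is $\int_{a_i-s}^{b_i-s}\Psi(x_i,r)\dif r$, while $B^T_{Tu}=\int_0^{Tu}\Gamma(x_{i_0(s)})\dif M^T_s$. Using $\int_0^{b_{i_0}-s}\Psi(x_{i_0},r)\dif r-\Gamma(x_{i_0})=-\int_{b_{i_0}-s}^{\infty}\Psi(x_{i_0},r)\dif r$ yields $V^T_{Tu}-B^T_{Tu}=\int_0^{Tu}H^T(s)\dif M^T_s$ with $H^T(s)=-\int_{b_{i_0(s)}-s}^{\infty}\Psi(x_{i_0(s)},r)\dif r+\sum_{i>i_0(s)}\int_{a_i-s}^{b_i-s}\Psi(x_i,r)\dif r$. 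The key estimate is then $\|H^T(s)\|_1\le 2\int_{b_{i_0(s)}-s}^{\infty}\|\bar{\Psi}(r)\|_1\dif r=:2h(b_{i_0(s)}-s)$, which follows from the coordinate-wise domination $\Psi(x,\cdot)\le\bar{\Psi}$ together with the observation that the shifted blocks $[a_i-s,b_i-s]$, $i>i_0(s)$, are pairwise disjoint and tile $[b_{i_0(s)}-s,\infty)$.

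Next I would apply the Itô isometry. The coordinates of $M^T$ are orthogonal martingales with $\dif\langle M^T_k\rangle_s=\lambda^T_{k,s}\dif s$ (no common jumps, by the thinning construction), and $\sup_{s\le T}\mathbb{E}\|\lambda^T_s\|_1$ is bounded uniformly in $T$ by Corollary~\ref{coro:intensity_bound}; hence $\frac{1}{T}\mathbb{E}\|V^T_{Tu}-B^T_{Tu}\|^2\le\frac{C}{T}\int_0^{Tu}\|H^T(s)\|_1^2\dif s\le\frac{4C}{T}\int_0^{Tu}h(b_{i_0(s)}-s)^2\dif s$. On each block the substitution $\sigma=b_i-s$ gives $\int_{a_i}^{b_i}h(b_i-s)^2\dif s=\int_0^{\Delta_T u}h(\sigma)^2\dif\sigma$, the same value for all $T/\Delta_T$ blocks, so the right-hand side equals $\frac{4C}{\Delta_T}\int_0^{\Delta_T u}h(\sigma)^2\dif\sigma$. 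Since $h$ is nonincreasing, bounded by $\|\bar{\Psi}\|_{L^1}<\infty$ under Assumption~\ref{ass:stability}, and $h(\sigma)\to 0$ as $\sigma\to\infty$, splitting the integral at a fixed large cut-off and using $\Delta_T\to\infty$ shows $\frac{1}{\Delta_T}\int_0^{\Delta_T u}h(\sigma)^2\dif\sigma\to 0$. Thus $\frac{1}{\sqrt T}(V^T_{Tu}-B^T_{Tu})\to 0$ in $L^2(\Prob)\subset L^1(\Prob)$, which together with the reduction above proves the lemma.

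I expect the main obstacle to be exactly what dooms the naive summand-by-summand approach: bounding $\int_0^{a_i}\Psi(x_i,s)\{M^T_{b_i-s}-M^T_{a_i-s}\}\dif s-\Gamma(x_i)\{M^T_{b_i}-M^T_{a_i}\}$ directly in $L^1$ leads, after $\mathbb{E}\|M^T_b-M^T_a\|\le C\sqrt{b-a}$, to a bound of order $\frac{\sqrt T}{\Delta_T}\int_0^{\infty}\|\bar{\Psi}(s)\|_1\sqrt s\,\dif s$, which diverges and, worse, invokes a $\sqrt{\,\cdot\,}$-integrability not granted by Assumption~\ref{ass:stability} alone. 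Re-assembling into $\int H^T\dif M^T$ is precisely what removes both difficulties: the Itô isometry replaces the many overlapping shifted increments by a single $L^2$ computation in which the prefactor $1/T$ exactly cancels the number $T/\Delta_T$ of blocks, leaving a Cesàro-type average $\frac{1}{\Delta_T}\int_0^{\Delta_T u}h^2$ of a vanishing tail. The small technical crux is establishing the domination $\|H^T(s)\|_1\le 2h(b_{i_0(s)}-s)$ — in particular, recognizing that the later shifted blocks exactly tile the half-line $[b_{i_0(s)}-s,\infty)$.
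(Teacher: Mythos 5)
Your proof is correct, and it takes a genuinely different route from the paper's. The paper proves this lemma by starting from the ``smeared-increment'' form produced by the integration by parts in Lemma~\ref{Lemma:firts_stochastic_approximation}, and then runs the discrete-martingale argument of Bacry \textit{et al.}: for each fixed $s$ it splits at $\eta\Delta_T$, views the sums over $i$ as discrete martingales $S_n(s,u,T)$ and $\tilde S_n(u,s,T)$ for the filtrations $(\mathcal F_{(i+1)\Delta_T u - s})$ and $(\mathcal F_{(i+1)\Delta_T u})$, and uses orthogonality of increments plus Jensen to reach a bound of order $\sqrt\eta\,\lVert\bar\Psi\rVert_{L^1}+\int_{\eta\Delta_T}^\infty\lVert\bar\Psi(s)\rVert_1\dif s$. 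You instead go back to the stochastic-integral representation $V^T_{Tu}$ from the proof of Lemma~\ref{Lemma:firts_stochastic_approximation} (before any integration by parts), write the target $\sum_i\Gamma(iu\Delta_T/T)\{M_{b_i}-M_{a_i}\}$ as $\int_0^{Tu}\Gamma(x_{i_0(s)})\dif M^T_s$, and compare the two through a single It\^o-isometry computation; the crux, the bound $\lVert H^T(s)\rVert_1\le 2\int_{b_{i_0(s)}-s}^{\infty}\lVert\bar\Psi(r)\rVert_1\dif r$, is sound (the shifted blocks actually tile only $[b_{i_0(s)}-s,\,Tu-s]$ rather than the whole half-line, but since $\bar\Psi\ge 0$ the inequality goes the right way), and the block-by-block change of variables turns the prefactor $1/T$ against the $T/\Delta_T$ blocks into the Ces\`aro average $\Delta_T^{-1}\int_0^{\Delta_T u}h^2$, which vanishes because $h(\sigma)=\int_\sigma^\infty\lVert\bar\Psi\rVert_1$ is bounded and tends to $0$. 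Your version buys convergence in $L^2(\Prob)$ (stronger than the stated $L^1$), an explicit rate in terms of $\Delta_T$ that is uniform in $u$, and it dispenses with both the integration by parts and the $\eta$-splitting/discrete-martingale machinery; the paper's route, on the other hand, stays on the Bacry blueprint whose ingredients (the smeared increments and the maximal-inequality controls) it re-uses later for tightness. Your diagnosis of why the naive summand-by-summand $L^1$ estimate fails — a divergent factor $\sqrt T/\Delta_T$ and an unwarranted $\sqrt s$-integrability of $\bar\Psi$ — is also accurate.
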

\begin{proof}
Define at any $u  \in [0,1]$
\begin{align}
U^T_{u}&=
    \sum_{i=0}^{\nicefrac{T}{\Delta_T} -1}
    \int_0^{iu\Delta_T}
    \Psi( i u \frac{\Delta_T}{T} , s) 
    \frac{1}{\sqrt{T}}
    \big\{ 
        M_{i\Delta_T u +\Delta_Tu -s}-M_{i \Delta_T u-s}
    \big\}
    \dif s
    \label{equ:U_T_to0}
    \\
    &-
    \sum_{i=0}^{\nicefrac{T}{\Delta_T} -1}
    \int_0^{\infty}
        \Psi( i u \frac{\Delta_T}{T}, s) 
    \dif s
    \frac{1}{\sqrt{T}}
    \big\{ 
        M_{i \Delta_Tu +\Delta_T u}-M_{i \Delta_T u}
    \big\}.
    \nonumber
\end{align}

In light of Lemma~\ref{Lemma:firts_stochastic_approximation}, it suffices to show that $U^T_{u}$ converges to $0$ as $T \to \infty$. To this effect, we adapt an argument introduced in Bacry \textit{et al.}~\cite[proof of Theorem 3]{bacrylimit}. Let $u \in [0,1]$. Let us first remark that, by Corollary~\ref{coro:intensity_bound}, one has some $C>0$ such that  $\mathbb{E}[ \lVert M^T_{ \Delta_T u} \rVert^2_2] \leq \mathbb{E}[ \int^{\Delta_T}_0  \lVert \lambda_t\rVert_1 \dif t]  \leq C \Delta_T$. Hence the two terms at $i=0$ in~\eqref{equ:U_T_to0} are respectively null and in $\mathcal{O}(\sqrt{\nicefrac{\Delta_T}{T}})$ as $T \to \infty$, and we work only with these wherein $i \geq 1$.  Consider now the first integral on the right-hand side of~\eqref{equ:U_T_to0}. Working once more \textit{à la} Césàro, we let $\eta \in [0,u]$, and detach the integrands within which $s>\eta \Delta_T $. By Fubini's Theorem, for any $T >0$,

\begin{align*}
\sum_{i=1}^{ \nicefrac{T}{\Delta_T} -1}
    \int_{\eta \Delta_T }^{i\Delta_T u}
    \Psi( i \frac{\Delta_T}{T} &u, s) 
    \frac{1}{\sqrt{T}}\big\{ 
        M_{i\Delta_T u +\Delta_Tu -s}-M_{i \Delta_T u-s}
    \big\}
    \dif s
    \\
    &=
    \int_0^{\infty}
    \sum_{i=1}^{\lfloor \nicefrac{T}{\Delta_T} \rfloor}
    \mathbb{1}_{\{ \eta \Delta_T  \leq s \leq i \Delta_T u \}}
    \Psi( i \frac{\Delta_T}{T} u, s) 
    \frac{1}{\sqrt{T}}
    \big\{
        M_{i\Delta_T u +\Delta_Tu -s}-M_{i \Delta_T u-s}
    \big\}
    \dif s
    \\
    &=
    \int_0^{\infty} 
        S_{\lfloor \nicefrac{T}{\Delta_T} \rfloor}(s,u,T)
    \dif s,
\end{align*}
where, for any $s \in [0,T]$, the process $(S_n(s,u,T))_{n \geq 1}$ defined by
\begin{equation*}
    S_n(s,u,T)=
    \sum_{i=1}^n 
    \mathbb{1}_{\{ \Delta_T \eta \leq s \leq i \Delta_T u \}}
    \Psi( i \frac{\Delta_T}{T} u, s) 
    \frac{1}{\sqrt{T}}
    \big\{  
        M_{i\Delta_T u +\Delta_Tu -s}-M_{i \Delta_T u-s}
    \big\}
\end{equation*}
is a discrete martingale for the filtration $(\mathcal{F}_{(i+1)\Delta_T u -s})_i$. Using Corollary~\ref{coro:intensity_bound}, one has some $C>0$ such that $\mathbb{E}[ \lVert M_{(i+1)\Delta_T u -s}- M_{i \Delta_T u -s} \rVert_2^2 ] \leq C \Delta_T u$, hence for any $n \in \mathbb{N}$
\begin{equation*}
    \mathbb{E}\big[ \lVert S_n(s,u,T) \rVert_2^2 \big]
    \leq 
    n
    C
      \lVert  \Bar{\Psi}(s) 
     \rVert_2^2
     \frac{\Delta_T}{T}
      \mathbb{1}_{\{ \eta \Delta_T \leq s \leq i \Delta_T u \}},
\end{equation*}
where we have also used the fact that $\Psi(x,s) \leq \Bar{\Psi}(s)$ for any $x \in [0,1]$. At $n = \lfloor\nicefrac{T}{\Delta_T} \rfloor$ in particular, by Jensen's inequality applied to $x \mapsto \sqrt{x}$, one has some $C'>0$  such that
\begin{align*}
\mathbb{E}\Big[ 
    \big\lVert
    \sum_{i=1}^{ \nicefrac{T}{\Delta_T} -1 }
    \int_{\eta \Delta_T }^{i\Delta_T u}
    \Psi( i \frac{\Delta_T}{T} u, s) 
    &\frac{1}{\sqrt{T}}\big\{ 
        M_{i\Delta_T u +\Delta_Tu -s}
        -
        M_{i \Delta_T u-s}
    \big\}
    \dif s
    \big\rVert_1
    \Big]
    \\
    &\leq 
    \int_0^{\infty} 
    \mathbb{E}\big[ \lVert S_{\lfloor\nicefrac{T}{\Delta_T \rfloor}}(s,u,T) \rVert_1\big] \dif s
    \leq
    C'\int_{\eta \Delta_T}^{\infty} \lVert \Bar{\Psi} (s) \rVert_1 \dif s,
\end{align*}
which goes to $0$ as $T \to \infty$  in view of the integrability of $\Bar{\Psi}$. The remaining terms in~\eqref{equ:U_T_to0} express as 
    \begin{equation*}
        \int_0^{\infty}
        \sum_{i=1}^{ \nicefrac{T}{\Delta_T} -1 }
        \Psi(i u\frac{\Delta_T}{T},s) \Big(
        \mathbb{1}_{s< \eta \Delta_T }
    \frac{1}{\sqrt{T}}
    \big\{ 
        M_{i \Delta_Tu + \Delta_T u -s}-M_{i  \Delta_T u -s}
    \big\}
    -
    \frac{1}{\sqrt{T}}
    \big\{ 
        M_{i  \Delta_T u  +  \Delta_T}-M_{i  \Delta_T}
    \big\}
    \Big)
    \dif s.
    \end{equation*}

Proceeding in the same fashion as above, we introduce the discrete martingale
\begin{equation*}
    \tilde{S}_n(u,s,T)=\sum_{i=1}^n \Psi(i u \frac{\Delta_T}{T},s)
    \frac{1}{\sqrt{T}}        \Big(  \mathbb{1}_{ \{s<\eta  \Delta_T  \} }\big\{ 
        M_{i \Delta_T u +\Delta_Tu-s}-M_{i \Delta_Tu-s}
    \big\}
    -
    \big\{ 
        M_{i \Delta_T u +\Delta_T u}-M_{i \Delta_T u}
    \big\} \Big).
\end{equation*}
 For any $u \in [0,1]$ and any $s>0$, $(\tilde{S}_n(u,s,T))_n$ is adapted to $(\tilde{\mathcal{F}}^{u,T}_i)\vcentcolon=(\mathcal{F}_{(i+1)\Delta_T u })$. Using again the orthogonality of martingale increments, $\mathbb{E}[\lVert \tilde{S}_n(u,s,T)\rVert_2^2$ is bounded by
\begin{equation*}
    \sum_{i=1}^n
    \lVert \Psi (i u \frac{\Delta_T}{T},s) \rVert_2^2
    \mathbb{E}\Big[
    \big\lVert
     \mathbb{1}_{s < \eta \Delta_T } 
    \big\{  
        M_{i \Delta_T u+\Delta_T u-s}-M_{i \Delta_T u-s}
    \big\}
    -
    \big(\{
        M_{i \Delta_T u+\Delta_T u }-M_{i\Delta_T u}
    \big\}
    \big\rVert^2_2
    \Big].
\end{equation*}
Recall that we have chosen $\eta < u$. An orthogonality argument then shows that the expectations in the summands hereinabove are respectively bounded by
\begin{align*}
    \int_{i \Delta_T u +\Delta_T u -s}^{i \Delta_T u+\Delta_T u}
        \mathbb{E} \big[ \lVert \lambda^T_r \rVert_1 ] 
    \dif r
    +
    \int_{i \Delta_T u -s}^{i \Delta_T u}
       \mathbb{E} \big[
        \lVert \lambda^T_{r} \rVert_1
       \big]
    \dif r,
\end{align*}
if $s \leq  \eta \Delta_T$, and by$
    \int_{i u \Delta_T}^{(i+1)u \Delta_T} \mathbb{E}[ \lVert \lambda^T_r \rVert_1 ] \dif r
$ if $s \geq \eta \Delta_T$.  Again, by Corollary~\ref{coro:intensity_bound}, $\mathbb{E}[ \lVert \lambda^T_t \rVert_1 ]$ is bounded uniformly in $(t,T)$. Altogether, one has some $C>0$ such that $ \mathbb{E}\big[\lVert \tilde{S}_n(u,s,T)\rVert_2^2\big]$ is less than
\begin{equation}\label{equ:majoration}
    C
    \Big( 
    \sum_{i=1}^{n}
    \mathbb{1}_{\{ s \leq \eta \Delta_T\}}
    \lVert \Psi (i u \frac{\Delta_T}{T},s) \rVert_2^2
    \eta \Delta_T
    +
    \sum_{i=1}^{n}
    \mathbb{1}_{\{ s > \eta \Delta_T\}}
    \lVert \Psi (i u \frac{\Delta_T}{T},s) \rVert_2^2
    u \Delta_T
    \Big).
\end{equation}
Finally, Jensen's inequality applied to $x \mapsto \sqrt{x}$ together with ~\eqref{equ:majoration} shows that
\begin{align*}
    \int_0^{\infty}
    \mathbb{E}\Big[ 
    \Big\lVert 
    \sum_{i=1}^{ \nicefrac{T}{\Delta_T} -1 }
        \Psi(i u \frac{\Delta_T}{T},s)
        \mathbb{1}_{s< \eta \Delta_T}\big\{
        M^T_{(i+1)u\Delta_T-s}
        -
        M^T_{iu\Delta_T-s}
        \}
        &-
        \big\{
        M^T_{(i+1)u\Delta_T}
        -
        M^T_{iu\Delta_T}
        \}
    \big)
    \Big\rVert_2 
    \Big]
    \dif s
\end{align*}
is dominated, up to some multiplicative constant, by
\begin{equation*}
    \Big( \int_0^{\infty}
    \lVert \bar{\Psi}(s) \rVert_1 \dif s \Big)
    \sqrt{\eta}
    +
    \Big( \int_{ \eta \Delta_T }^{\infty}
    \lVert \bar{\Psi}(s) \rVert_1 \dif s \Big).
\end{equation*}
By integrability of $\bar{\Psi}$, the last term hereinabove tends to $0$ as $T \to \infty$. As $\eta$ is freely taken in $[0,u]$ this ends the proof. 
\end{proof}

\begin{Lemma}\label{Lemma:next_to_last_stochastic_approximation}
Under Assumption~\ref{ass:stability}, with $\Gamma$ defined in~\eqref{def:gamma}, for any $u \in [0,1]$,
\begin{equation*} 
   \sum_{i=0}^{ \nicefrac{T}{\Delta_T} -1}
   \Big( 
   \int_0^{\infty}
        \Psi( i \frac{\Delta_T}{T} u,s)
    \dif s
    \Big)
    \frac{1}{\sqrt{T}}\big\{ 
        M_{i \Delta_T u + \Delta_T u}
        -
        M_{i \Delta_T u }
    \big\}
    -
    \int_0^{u}
    \Gamma(s) \frac{1}{\sqrt{T}}
    \dif M^T_{sT}
    \xrightarrow[T \to \infty]{L^2(\Prob)}
    0
\end{equation*}
\end{Lemma}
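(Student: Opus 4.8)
The plan is to recognise the finite sum as a left-endpoint Riemann approximation of the stochastic integral $\int_0^u\Gamma(s)\tfrac{1}{\sqrt T}\dif M^T_{sT}$ along the grid $\{i\Delta_T u\}_{0\le i\le T/\Delta_T}$, and then to bound the approximation error by the It\^o isometry together with the continuity of $\Gamma$ obtained in Lemma~\ref{Lemma:gamma_continuity}. First I would perform the time change $r=sT$, which turns the target integral into $\tfrac1{\sqrt T}\int_0^{Tu}\Gamma(r/T)\dif M^T_r$; then, using $\int_0^\infty\Psi(i u\tfrac{\Delta_T}{T},s)\dif s=\Gamma(i u\tfrac{\Delta_T}{T})$ and the fact that the grid points $i\Delta_T u$, $0\le i\le T/\Delta_T$, partition $[0,Tu]$, I would rewrite the finite sum as $\tfrac1{\sqrt T}\int_0^{Tu}\Gamma_T(r)\dif M^T_r$ with
\begin{equation*}
\Gamma_T(r):=\sum_{i=0}^{\nicefrac{T}{\Delta_T}-1}\mathbb{1}_{[i\Delta_T u,(i+1)\Delta_T u)}(r)\,\Gamma\big(i u\tfrac{\Delta_T}{T}\big),
\end{equation*}
the step function obtained by freezing $r\mapsto\Gamma(r/T)$ at the left endpoint of each cell. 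The quantity in the statement is then exactly $\tfrac1{\sqrt T}\int_0^{Tu}\big(\Gamma_T(r)-\Gamma(r/T)\big)\dif M^T_r$.

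Next I would estimate its $L^2(\Prob)$ norm. Since the coordinates $M^T_1,\dots,M^T_p$ of $\boldsymbol M^T$ are pairwise orthogonal martingales --- two coordinates of $\boldsymbol N^T$ never jump simultaneously, by Definition~\ref{def:Hawkes_definition} --- with predictable quadratic variations $\langle M^T_k\rangle_t=\int_0^t\lambda^T_{k,s}\dif s$, the It\^o isometry gives
\begin{equation*}
\mathbb{E}\Big[\big\lVert\tfrac1{\sqrt T}\int_0^{Tu}\big(\Gamma_T(r)-\Gamma(\tfrac rT)\big)\dif M^T_r\big\rVert_2^2\Big]\le\frac1T\int_0^{Tu}\big\lVert\Gamma_T(r)-\Gamma(\tfrac rT)\big\rVert_1^2\,\mathbb{E}\big[\lVert\lambda^T_r\rVert_1\big]\dif r.
\end{equation*}
Corollary~\ref{coro:intensity_bound} bounds $\mathbb{E}[\lVert\lambda^T_r\rVert_1]$ uniformly in $(r,T)$. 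For $r$ in the $i$-th cell one has $|r/T-i u\Delta_T/T|\le\Delta_T/T$, so the $s=0$ instance of Lemma~\ref{Lemma:gamma_continuity} --- equivalently, the uniform continuity of $x\mapsto\Gamma(x)$ with $\lVert\Gamma(x)-\Gamma(y)\rVert_1\le C\,\omega(g,|x-y|)$ read off from its proof --- yields $\lVert\Gamma_T(r)-\Gamma(r/T)\rVert_1\le C\,\omega(g,\Delta_T/T)$ uniformly in $r\in[0,Tu]$ and $u\in[0,1]$. Plugging these two bounds in, the displayed expectation is at most $C'\,\omega(g,\Delta_T/T)^2$ with $C'$ independent of $u$ and $T$.

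Finally, since $\Delta_T/T\to0$ and $g$ is (uniformly) continuous on $[0,1]$ under Assumption~\ref{ass:g_and_mu_are_C0}, $\omega(g,\Delta_T/T)\to0$, and the lemma follows. I do not expect a genuine obstacle here: the only points requiring care are the bookkeeping that makes the finite sum coincide \emph{exactly} with the step-function stochastic integral $\tfrac1{\sqrt T}\int_0^{Tu}\Gamma_T\dif M^T$ (in particular that the grid $\{i\Delta_T u\}$ fills $[0,Tu]$ and that $\Psi(x,\cdot)\equiv0$ on $(-\infty,0)$ so no boundary cells are lost), and the matrix--vector form of the isometry, where the pairwise orthogonality of the $M^T_k$ is what reduces the variance to a coordinatewise sum against $\lambda^T_{k,s}\dif s$.
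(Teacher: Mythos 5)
Your proposal is correct and follows essentially the same route as the paper: the difference is rewritten as a single stochastic integral of the step-function error $\Gamma_T(r)-\Gamma(r/T)$ against the rescaled martingale, then bounded in $L^2(\Prob)$ via the isometry, the uniform intensity bound of Corollary~\ref{coro:intensity_bound}, and the uniform continuity of $\Gamma$ from Lemma~\ref{Lemma:gamma_continuity}. The only cosmetic difference is that you keep the quantitative modulus $\omega(g,\Delta_T/T)$ where the paper argues with a plain $\varepsilon$--uniform-continuity step.
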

\begin{proof}
    Let $T>0$ and define for any $u \in [0,1]$
    \begin{equation*}
        \mathcal{Z}^T_u=
        \sum_{i=0}^{ \nicefrac{T}{\Delta_T} -1} \int_0^{\infty} \Psi( i \frac{\Delta_T}{T} u,s) \dif s 
        \frac{1}{\sqrt{T}}
    \big\{
    M_{i \Delta_T u + \Delta_T u}
    -
    M_{i \Delta_T u }
    \big\}
    -
    \int_0^{u}
    \Gamma(s)
    \frac{1}{\sqrt{T}} \dif  M^T_{sT}.
    \end{equation*}
    The process $(\mathcal{Z}_{u}^T)$ re-writes as the stochastic integral
    \begin{equation*}
        \mathcal{Z}^T_{u}
        =
        \int_0^u
        \Big( 
       \sum_{i=0}^{ \nicefrac{T}{\Delta_T} -1}
        \mathbb{1}_{\{ i\Delta_T u \leq s T \leq (i+1)\Delta_T u\}} 
        \big( 
        \Gamma(i u \frac{\Delta_T}{T}) - \Gamma(s)
       \big) \Big) \dif \Tilde{M}^T_s, 
    \end{equation*}
    where $(\tilde{M}_u)_{u \in [0,1]}$ is the rescaled martingale $(T^{-\nicefrac{1}{2}}M_{uT}^T)_{u \in [0,1]}$. Hence for any $u \in [0,1]$,
    \begin{equation}\label{equ:W_control}
        \mathbb{E}\big[ 
            \lVert \mathcal{Z}^T_{u} \rVert^2_2
        \big]
        \leq 
        C
        \sum_{i=0}^{\lfloor \nicefrac{T}{\Delta_T} \rfloor}
        \int_{i \frac{\Delta_T}{T} u}
        ^{i \frac{\Delta_T}{T}u + \frac{\Delta_T}{T} u}
        \lVert 
            \Gamma(i u \frac{\Delta_T}{T}) 
            -
            \Gamma(s)
        \rVert^2_2
        \dif s,
    \end{equation}
    where $ C= 
        \sup_{t \in [0,T], T>0} 
        \mathbb{E}[ \lVert \lambda^T_t \rVert_1 ]
        $ is indeed finite and bounded as per Corollary~\ref{coro:intensity_bound}. Moreover, we have established in Lemma~\ref{Lemma:gamma_continuity} the uniform continuity of $\Gamma$ over $[0,1]$.  Letting $\varepsilon>0$, the integrands in~\eqref{equ:W_control} then  fall uniformly below $\nicefrac{\varepsilon}{2C}$ as the mesh size $\nicefrac{\Delta_T}{T}$ tends to $0$. Whence the bound itself falls below $ C\sum_{i=0}^{\lfloor \nicefrac{T}{\Delta_T} \rfloor} \frac{\Delta_T}{T} \frac{1}{2C}\varepsilon \leq  \varepsilon$ as $T \to \infty$.
\end{proof}

Pulling Lemmata~\ref{Lemma:firts_stochastic_approximation},~\ref{Lemma:second_stochastic_approximation} and~\ref{Lemma:next_to_last_stochastic_approximation} together, we have established that for any $u \in [0,1]$,
\begin{equation}\label{equ:X_is_dM_integral}
    \frac{1}{\sqrt{T}} 
    \big\{ N_{Tu} - \mathbb{E}\big[ N_{Tu} \big]\big\}
    -
    \big\{ 
        \frac{1}{\sqrt{T}}M^{T}_{Tu}
        +
        \int_0^u \Gamma(x) \frac{1}{\sqrt{T}}\dif  M^T_{Tx}
    \big\}
    \xrightarrow[T \to \infty]{\Prob} 0.
\end{equation}

\subsection{Completion of the proof of Theorem~\ref{theorem:FTCL}} Recall that $
    \boldsymbol{Id}+\Gamma(x) = (\boldsymbol{Id}-\boldsymbol{K}(x))^{-1}
$ for any $x \in [0,1]$, hence~\eqref{equ:X_is_dM_integral} is equivalent to
\begin{equation}\label{equ:X_is_dM_integral_bis}
    \frac{1}{\sqrt{T}} 
    \big\{ N_{Tu} - \mathbb{E}\big[ N_{Tu} \big]\big\}
    -
        \int_0^u 
            (\boldsymbol{Id}-\boldsymbol{K}(x))^{-1} \frac{1}{\sqrt{T}}
        \dif  M^T_{Tx}
    \xrightarrow[T \to \infty]{\Prob} 0.
\end{equation}

The convergence will hold uniformly in $u$ provided we can show the tightness of the involved family of processes. For now, let us prove the weak convergence of the right-hand term towards the desired limit.

\begin{Lemma}\label{Lemma:skoro_conv}
    Under Assumptions~\ref{ass:g_and_mu_are_C0} and~\ref{ass:stability},
    \begin{equation*}
         \int_0^u 
            (\boldsymbol{Id}-\boldsymbol{K}(x))^{-1} \frac{1}{\sqrt{T}}
        \dif  M^T_{Tx}
        \xrightarrow[T \to \infty]{\mathcal{L}(\Prob)}
        \int_0^u \mathfrak{S}(x) \dif W_x
    \end{equation*}
    in Skorokhod topology with $\mathfrak{S}(x) = (\boldsymbol{Id}-\boldsymbol{K}(x))^{-1} \boldsymbol{\Sigma}(x)$ with  $\boldsymbol{\Sigma}$ as in Theorem~\ref{theorem:FTCL}.
\end{Lemma}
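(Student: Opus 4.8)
The plan is to view $\mathcal{M}^T_u := \int_0^u (\boldsymbol{Id}-\boldsymbol{K}(x))^{-1}\tfrac{1}{\sqrt{T}}\dif M^T_{Tx}$ as a sequence of càdlàg, locally square-integrable $\R^p$-valued martingales for the time-changed filtration $(\mathcal{F}_{Tu})_{u\in[0,1]}$ --- the integrand is deterministic and continuous on the compact $[0,1]$ (Assumption~\ref{ass:g_and_mu_are_C0} together with invertibility of $\boldsymbol{Id}-\boldsymbol{K}(x)$ under Assumption~\ref{ass:stability}), hence predictable and locally bounded, and square-integrability follows from Corollary~\ref{coro:intensity_bound} --- and to apply a standard martingale functional central limit theorem, in the spirit of the one used in Bacry \textit{et al.}~\cite{bacrylimit}. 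It then suffices to prove that (i) the predictable quadratic covariation $\langle\mathcal{M}^T\rangle_u$ converges in probability, for each $u$, to a deterministic continuous matrix limit, and (ii) a conditional Lindeberg condition holds for the jumps of $\mathcal{M}^T$.

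For (i): since no two coordinates of $\boldsymbol{N}^T$ jump simultaneously, $\langle M^T_k,M^T_l\rangle_t=\mathbb{1}_{\{k=l\}}\int_0^t\lambda^T_{k,s}\dif s$, whence
\begin{equation*}
\langle\mathcal{M}^T\rangle_u=\frac{1}{T}\int_0^{Tu}\big(\boldsymbol{Id}-\boldsymbol{K}(\tfrac{s}{T})\big)^{-1}\diag(\lambda^T_s)\big(\boldsymbol{Id}-\boldsymbol{K}(\tfrac{s}{T})\big)^{-\trans}\dif s .
\end{equation*}
Writing $A^T_x:=\tfrac{1}{T}\int_0^{Tx}\lambda^T_s\dif s=\tfrac{1}{T}N^T_{Tx}-\tfrac{1}{T}M^T_{Tx}$, Theorem~\ref{Th:LLN} and Lemma~\ref{Lemma:martingale_to_0} give that $A^T$ converges uniformly on $[0,1]$, $\Prob$-a.s., to the continuous function $x\mapsto\int_0^x(\boldsymbol{Id}-\boldsymbol{K}(y))^{-1}\mu(y)\dif y$, whose diagonal-matrix of increments is $\boldsymbol{\Sigma}$. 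Because $x\mapsto(\boldsymbol{Id}-\boldsymbol{K}(x))^{-1}$ is continuous on $[0,1]$, the uniform convergence of the nondecreasing $A^T$ to a continuous limit transfers to the matrix-weighted Stieltjes integral above: approximating the weight by a step function on a grid of mesh $\delta$, handling each block with the uniform convergence of $A^T$, and letting $\delta\to 0$ yields, uniformly in $u$ and $\Prob$-a.s.,
\begin{equation*}
\langle\mathcal{M}^T\rangle_u\longrightarrow C(u):=\int_0^u(\boldsymbol{Id}-\boldsymbol{K}(x))^{-1}\boldsymbol{\Sigma}(x)(\boldsymbol{Id}-\boldsymbol{K}(x))^{-\trans}\dif x .
\end{equation*}

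For (ii): the jumps of $\mathcal{M}^T$ sit at the rescaled event times of $\boldsymbol{N}^T$, and as each jump of $M^T$ is a unit jump along a single coordinate, $\lVert\Delta\mathcal{M}^T_u\rVert\le c\,T^{-1/2}$ with $c:=\sup_{x\in[0,1]}\lVert(\boldsymbol{Id}-\boldsymbol{K}(x))^{-1}\rVert<\infty$. Hence for every $\varepsilon>0$ and all $T$ with $cT^{-1/2}<\varepsilon$ one has $\sum_{v\le u}\lVert\Delta\mathcal{M}^T_v\rVert^2\mathbb{1}_{\{\lVert\Delta\mathcal{M}^T_v\rVert>\varepsilon\}}=0$, so the Lindeberg condition is trivially satisfied. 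Both hypotheses holding, the martingale FCLT gives $\mathcal{M}^T\Rightarrow\mathcal{M}$ in Skorokhod topology, where $\mathcal{M}$ is the continuous centered Gaussian martingale with $\langle\mathcal{M}\rangle=C$. Since $\boldsymbol{\Sigma}$ is diagonal with nonnegative entries, $\boldsymbol{\Sigma}=\boldsymbol{\Sigma}^{1/2}(\boldsymbol{\Sigma}^{1/2})^{\trans}$, hence $C(u)=\int_0^u\mathfrak{S}(x)\mathfrak{S}(x)^{\trans}\dif x$ with $\mathfrak{S}(x)=(\boldsymbol{Id}-\boldsymbol{K}(x))^{-1}\boldsymbol{\Sigma}^{1/2}(x)$, and by Lévy's characterization $\int_0^{\cdot}\mathfrak{S}(x)\dif W_x$ is a version of $\mathcal{M}$, which is the asserted limit (here $\mathfrak{S}$ is the $(\boldsymbol{Id}-\boldsymbol{K})^{-1}\boldsymbol{\Sigma}^{1/2}$ of Theorem~\ref{theorem:FTCL}, only the product $\mathfrak{S}\mathfrak{S}^{\trans}$ mattering).

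The only genuine work is step (i), namely transporting the uniform $\Prob$-a.s. convergence of the integrated intensity $A^T$ through the continuous matrix weight $(\boldsymbol{Id}-\boldsymbol{K})^{-1}$, which is a routine Helly-type argument once Theorem~\ref{Th:LLN} and Lemma~\ref{Lemma:martingale_to_0} are in hand; the jump condition is immediate, and no extra assumption (in particular not Assumption~\ref{ass:disease}) is needed, consistently with the statement. One must only be attentive to the matrix bookkeeping (placement of the transpose) and to the harmless discrepancy between $\boldsymbol{\Sigma}$ and $\boldsymbol{\Sigma}^{1/2}$ in the displayed formula for $\mathfrak{S}$.
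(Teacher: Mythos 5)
Your proposal is correct, but it takes a genuinely different route from the paper. The paper proceeds in two steps: it first applies the martingale central limit theorem to the bare rescaled martingale $\tilde M^T_u = T^{-\nicefrac{1}{2}} M^T_{Tu}$, using the \emph{optional} bracket $[\tilde M^T,\tilde M^T]_u = \frac{1}{T}\textup{Diag}(N^T_{Tu}) \to \boldsymbol{\Sigma}(u)$ (a direct consequence of Theorem~\ref{Th:LLN}) together with the uniform jump bound $T^{-\nicefrac{1}{2}}$, and then transfers the convergence to the stochastic integral via the continuous mapping theorem, arguing that $X \mapsto \int_0^{\cdot} (\boldsymbol{Id}-\boldsymbol{K}(x))^{-1}\dif X_x$ is continuous on $D[0,1]$ because $(\boldsymbol{Id}-\boldsymbol{K})^{-1}$ is continuously differentiable. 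You instead apply the functional martingale CLT directly to the integral process $\mathcal{M}^T$, computing its \emph{predictable} bracket through the integrated intensity $A^T_x = \frac{1}{T}N^T_{Tx} - \frac{1}{T}M^T_{Tx}$ (so that Theorem~\ref{Th:LLN} and Lemma~\ref{Lemma:martingale_to_0} give its a.s.\ uniform limit), pushing the convergence through the continuous matrix weight by a step-function approximation, checking the Lindeberg condition trivially from the $cT^{-\nicefrac{1}{2}}$ jump bound, and identifying the limit by its deterministic quadratic covariation. Your computation of $\langle \mathcal{M}^T\rangle$ (diagonality from the absence of common jumps, the placement of the transpose) and the identification $C(u)=\int_0^u \mathfrak{S}\mathfrak{S}^{\trans}$ with $\mathfrak{S}=(\boldsymbol{Id}-\boldsymbol{K})^{-1}\boldsymbol{\Sigma}^{\nicefrac{1}{2}}$ are accurate, and you rightly note that the $\boldsymbol{\Sigma}$ appearing in the lemma's displayed definition of $\mathfrak{S}$ should be read as $\boldsymbol{\Sigma}^{\nicefrac{1}{2}}$, only the product $\mathfrak{S}\mathfrak{S}^{\trans}$ being relevant. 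What each approach buys: your one-step argument avoids having to justify the Skorokhod-continuity of the stochastic-integral map, a point the paper treats rather tersely (it rests on the smoothness of $(\boldsymbol{Id}-\boldsymbol{K})^{-1}$, in effect an integration-by-parts argument), at the cost of the explicit bracket computation and the Helly-type transfer step; the paper's two-step route is shorter on the page and keeps the limit theorem for the bare martingale isolated, which is coherent with its reuse of the tightness of $(T^{-\nicefrac{1}{2}}M^T_{Tu})$ elsewhere (Proposition~\ref{prop:tight}). Neither route needs Assumption~\ref{ass:disease}, consistently with the statement.
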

\begin{proof}[Proof.]
Let us show the rescaled martingale $(\tilde{M}_u^T) = \frac{1}{\sqrt{T}}(M^T_{Tu})$ converges towards a Gaussian process with covariance $\boldsymbol{\Sigma}$, as defined in Theorem~\ref{theorem:FTCL}. This follows from the fact that
\begin{equation*}
    [ \tilde{M}^T, \tilde{M}^T ]_u 
    =
    \frac{1}{T} \textup{Diag}(N_{Tu}) \to  \boldsymbol{\Sigma}(u)  \textup{  as  } T \to \infty,
\end{equation*}
in $L^2(\Prob)$ as per Theorem~\ref{Th:LLN}, which is a sufficient condition for the martingale central limit Theorem since the jumps of $T^{-\nicefrac{1}{2}} M^T_t$ are uniformly bounded by $T^{-\nicefrac{1}{2}}$. Observe furthermore that, $g$ being continuously differentiable, so is $\Gamma$ and so is $(\boldsymbol{Id}-\boldsymbol{\Gamma})^{-1}$. The function mapping $(X_u) \in D[0,1]$ to $(\int_0^u (\boldsymbol{Id}-\boldsymbol{K}(x))^{-1} \dif X_x) \in D[0,1]$ is then continuous in Skorokhod topology and the Lemma deduces from the continuous mapping Theorem. Note furthermore that the convergence occurs over the compact interval $[0,1]$ and the limit process is continuous, hence the results extends to the topology of uniform convergence.  
\end{proof}

Lemma~\ref{Lemma:skoro_conv} together with~\eqref{Lemma:skoro_conv} yields the convergence of Theorem~\ref{theorem:FTCL} in the sense of finite dimensional distributions. As we work with the null initial condition $N_0=0$, the convergence will also occur in Skorokhod topology provided the process $\frac{1}{\sqrt{T}} 
    \big\{ N_{Tu} - \mathbb{E}\big[ N_{Tu} \big]\big\}$ is tight, as shown in Proposition~\ref{prop:tight} below, before which we require Lemma~\ref{lemma:required_for_tightness}. Corollary~\ref{coro:sqrt_coro} will then follow from the second assertion of Proposition~\ref{prop:deterministic_convergence}.

    \begin{Lemma}\label{lemma:required_for_tightness}
   Under Assumptions~\ref{ass:g_and_mu_are_C0} to~\ref{ass:disease}, the family of functions $(t \mapsto \int_0^t K^T(t,s) \dif s)$ indexed on $T >0$ is uniformly equi-continuous.  
\end{Lemma}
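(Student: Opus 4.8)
The plan is to show that the family $\{t \mapsto \int_0^t K^T(t,s)\,\mathrm{d}s\}_{T>0}$ is equi-continuous by bounding the modulus of continuity uniformly in $T$, exploiting the integrability of the bounding resolvent $\bar\Psi \in L^1[0,\infty)$ together with the directional-derivative estimate of Lemma~\ref{Lemma:IPP_base}. Fix $T>0$ and $0 \le t_1 \le t_2 \le T$. Writing $F_T(t) = \int_0^t K^T(t,s)\,\mathrm{d}s$, I would split
\begin{align*}
    F_T(t_2) - F_T(t_1)
    =
    \int_{t_1}^{t_2} K^T(t_2,s)\,\mathrm{d}s
    +
    \int_0^{t_1} \big( K^T(t_2,s) - K^T(t_1,s) \big)\,\mathrm{d}s.
\end{align*}
The first term is handled by the coordinate-wise bound $K^T(t_2,s) \le \bar\Psi(t_2 - s)$, giving $\|\int_{t_1}^{t_2} K^T(t_2,s)\,\mathrm{d}s\|_1 \le \int_0^{t_2 - t_1} \|\bar\Psi(r)\|_1\,\mathrm{d}r$, which is a modulus of continuity of the $L^1$ function $\bar\Psi$ and hence vanishes as $t_2 - t_1 \to 0$, uniformly in $T$.

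For the second term I would use the change of variable and the fundamental theorem of calculus along the diagonal direction: for $s \le t_1$,
\begin{align*}
    K^T(t_2,s) - K^T(t_1,s)
    =
    \int_0^{t_2 - t_1} (\partial_t + \partial_s) K^T(t_1 + h, s + h)\,\mathrm{d}h
    +
    \big( K^T(t_1, s + (t_2 - t_1)) - K^T(t_1, s) \big),
\end{align*}
or more directly decompose $K^T(t_2,s) - K^T(t_1,s)$ by moving first to $K^T(t_1 + (t_2-t_1), s + (t_2-t_1))$ via the diagonal derivative and then adjusting the second argument. By Lemma~\ref{Lemma:IPP_base}, $\|(\partial_t + \partial_s) K^T(r + h, s + h)\| \le \frac{1}{T}\|\tilde\Psi(r - s)\|_1$ with $\tilde\Psi \in L^1[0,\infty)$, so the diagonal contribution is at most $\frac{t_2 - t_1}{T} \int_0^\infty \|\tilde\Psi(r)\|_1\,\mathrm{d}r$, which is $\mathcal{O}((t_2-t_1)/T)$ and thus uniformly small. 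The residual horizontal shift $K^T(t_1, s + \delta) - K^T(t_1,s)$ (with $\delta = t_2 - t_1$) is where the regularity of $\varphi$ itself enters, and here Assumption~\ref{ass:disease} is used: upon integrating in $s$ over $[0,t_1]$ and applying Fubini, this term reduces to comparing $\int K^T(t_1,s)\,\mathrm{d}s$ against a $\delta$-translate, which by the resolvent expansion $K^T = \sum_i (k^T)^{\star i}$ and the coordinate bound by $\bar\Psi$ is controlled by the $L^{1}$-modulus of continuity of $\bar\Psi$ — finite and vanishing with $\delta$ precisely because $\varphi \in L^{1+\epsilon}$ forces $\bar\Psi$ to be suitably regular (one may invoke the argument already used for the Skorokhod part, cf.\ the reasoning behind Assumption~\ref{ass:disease}).

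\textbf{The main obstacle} is the horizontal-shift term $K^T(t_1, s+\delta) - K^T(t_1,s)$: unlike the diagonal direction, this does not enjoy a clean $\frac{1}{T}$-type gain, and $K^T$ is not differentiable in $s$ alone without extra smoothness on $\varphi$. The resolution is to avoid pointwise derivatives and instead bound the integrated quantity $\int_0^{t_1}\|K^T(t_1,s+\delta) - K^T(t_1,s)\|\,\mathrm{d}s$ directly, dominating each iterated kernel $(k^T)^{\star i}$ coordinate-wise by $\bar\varphi^{\star i}$ and summing; this yields a bound of the form $\int_0^\infty \|\bar\Psi(r+\delta) - \bar\Psi(r)\|_1\,\mathrm{d}r$ plus boundary terms of size $\int_0^\delta \|\bar\Psi\|_1$, both of which go to $0$ as $\delta \to 0$ independently of $T$ — the $L^1$-continuity of translation being exactly what Assumption~\ref{ass:disease} secures for $\bar\Psi$. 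Assembling the three pieces gives a modulus of continuity $\varpi(\delta)$, independent of $T$, with $\varpi(\delta) \to 0$ as $\delta \to 0$, which is the claimed uniform equi-continuity.
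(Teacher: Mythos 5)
Your decomposition of the increment into (i) the boundary term $\int_{t_1}^{t_2}K^T(t_2,s)\,\dif s$, (ii) a diagonal increment handled by Lemma~\ref{Lemma:IPP_base}, and (iii) a shift in the second argument is a sensible architecture (and is genuinely different from the paper, which instead derives a renewal inequality for $f^T(t,x,s)=K^T(t+x,s)-K^T(t,s)$ directly from the resolvent identity $K^T=k^T+k^T\star K^T$ and resolves it with $\bar\Psi$). Items (i) and (ii) are fine. The gap is in your treatment of the horizontal-shift term. You claim that, because each iterate satisfies $(k^T)^{\star i}\leq \bar\varphi^{\star i}$ coordinate-wise, the quantity $\int_0^{t_1}\lVert K^T(t,s+\delta)-K^T(t,s)\rVert\,\dif s$ is bounded by $\int_0^{\infty}\lVert\bar\Psi(r+\delta)-\bar\Psi(r)\rVert_1\,\dif r$ plus boundary terms. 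That inference is invalid: pointwise domination $0\leq A\leq B$, $0\leq C\leq D$ gives no control of $A-C$ by $B-D$, so the difference of the non-convolutive resolvent at shifted arguments is not majorised by the corresponding difference of translates of the bounding convolutive resolvent. A correct version of your step must act inside the series: the second argument $s$ enters $(k^T)^{\star i}(t,s)$ only through the innermost factor $\varphi(\cdot-s)$, so the shifted difference can be bounded by $\bar\varphi^{\star(i-1)}$ convolved against $\lvert\varphi(\cdot-\delta)-\varphi(\cdot)\rvert$ (plus edge terms), and summing the geometric series reduces the problem to the smallness, uniformly in position, of the mass of $\varphi$ on intervals of length $\delta$ — which is exactly where the paper invokes Assumption~\ref{ass:disease} via H\"older, bounding $\sup_u\int_u^{u+\delta}\varphi(s)\,\dif s\leq \delta^{1/\nu}\lVert\varphi\rVert_{L^{1+\epsilon}}$. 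Alternatively one can do what the paper does and write a renewal inequality for the whole increment, which bypasses the term-by-term expansion.

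Two smaller points. First, your attribution of Assumption~\ref{ass:disease} to ``$L^1$-continuity of translation of $\bar\Psi$'' is off: continuity of translation in $L^1$ holds for every integrable function and needs no extra hypothesis; the assumption is used (in the paper) to control the non-smooth term $g(\tfrac{t}{T})\{\varphi(t+x-s)-\varphi(t-s)\}$, i.e.\ the mass of $\varphi$ itself on short intervals at arbitrary positions, with an explicit rate $x^{1/\nu}$. Second, your displayed identity for the decomposition is not an identity as written (the residual shift should be taken at level $t_2$, i.e.\ $K^T(t_2,s)-K^T(t_2,s+\delta)$, once the diagonal move lands at $K^T(t_2,s+\delta)$); you mention the correct variant in passing, but the formula you write does not sum to $K^T(t_2,s)-K^T(t_1,s)$. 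With the shift term repaired as above, your route can be made to work, but as it stands the key estimate is unjustified.
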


\begin{proof}
let $T>0$. Let also $(t',t) \in [0,T]^2$. Recall that $K^T$ has support in the simplex $\{t,s \in [0,T] \hspace{0.1cm} \lvert \hspace{0.1cm} t>s\}$, hence, 
\begin{equation}\label{equ:to_control}
            \int_0^{t'}  
            K^T(t',s) 
        \dif s
        -
        \int_0^t 
            K^T(t,s)
        \dif s
        =
        \int_0^T K^T(t',s) - K^T(t,s) \dif s.
\end{equation}
Introduce the function $f^T \colon (t,s,x)  \in [0,T]^3  \mapsto    K^T(t+x,s) - K^T(t,s) $. Controlling~\eqref{equ:to_control} reduces to bounding $\int_0^T f^T(t,x,s) \dif s$.  For any $(t,x,s) \in [0,T]^3$,
    \begin{align}
        f^T(t,x,s)
        &=
        \Big(
            g(\frac{t+x}{T})
            -
            g(\frac{t}{T})
        \Big)
        \varphi(t+x-s) 
        \nonumber
        \\
        &+
        g(\frac{t}{T})
        \Big( 
            \varphi(t+x-s)
            -
            \varphi(t-s)
        \Big) 
        \nonumber
        \\
        &+
        \int_s^{t+x} 
            \big( g(\frac{t+x}{T})- g(\frac{t}{T})\big)\varphi(t+x- u) 
            K^T(u,s)
        \dif u
        \label{equ:baseline}
        \\
        &+
        \int_s^{s+x} 
            g(\frac{t}{T}) \varphi(t+x- u) 
            K^T(u,s)
        \dif u
        \nonumber
        \\
        &+
        \int_0^{t-s} 
            g(\frac{t}{T} )\varphi(u) 
            \big(
            K^T(t+x-u,s)
            -
            K^T(t-u,s) 
            \big) 
        \dif u.
        \nonumber
    \end{align}
    Denote by $m^T(t,x,s)$ the sum of the four first terms on the right-hand side of~\eqref{equ:baseline}. Then, using again that $K^T(u,v)=0$ when $u \leq v$  and $K^T(u,v) \geq 0$ otherwise, one recovers  the coordinate-wise renewal inequality
    \begin{equation}\label{equ:some_more_renewal}
        f^T(t,x,s)
        \leq 
        m^T(t,x,s)
        +
        \int_0^t \Bar{\varphi}(u) f^T(t-u,x,s) \dif u, \hspace{0.1cm} t \in [0,T].
    \end{equation}
    Inequation~\eqref{equ:some_more_renewal} resolves into
    \begin{equation*}
        f^T(t,x,s) 
        \leq m^T(t,x,s)
        +
        \int_0^t \Bar{\Psi}(t-u)m^T(u,x,s) \dif u, \hspace{0.1cm} t \in [0,T],
    \end{equation*}
    where $\Bar{\Psi}= \sum_{i \geq 1}^{\infty} \Bar{\varphi}^{\star i}$ is the resolvent of the convolution kernel $t,s \mapsto \Bar{\varphi}(t-s)$. Integrating, using Fubini's Theorem and Young's convolution inequality,
    \begin{align*}
    \big\lVert 
        \int_0^T f^T(t,x,s) \dif s
    \big\rVert 
    &\leq 
    \big\lVert 
        \int_0^T m^T(t,x,s) \dif s
        +
        \int_0^T
        \Big( 
        \int_0^t
        \Bar{\Psi}(t-u)
         m^T(u,x,s)  \dif u  \Big) \dif s
    \big\rVert 
    \\
    &\leq 
    \big\lVert 
        \int_0^T m^T(t,x,s) \dif s
        +
        \int_0^t
        \Bar{\Psi}(t-u)
        \Big( 
        \int_0^T 
            m^T(u,x,s)
        \dif s
        \Big)
        \dif  u
    \big\rVert 
    \\
    &\leq 
    (1+
    \int_0^{\infty} \lVert \Bar{\Psi}(s) \rVert  \dif s)
    \sup_{u \in [0,T]}
    \big\lVert 
        \int_0^T m^T(u,x,s) \dif s 
    \big \rVert.
    \end{align*}
    The Lemma follows from the last multiplicative factor hereinabove tending to $0$ as $x \to 0$, as we now establish. Let $u \in [0,T]$. Firstly,
    \begin{equation*}
        \lVert \int_0^T \Big( g\big(\frac{u+x}{T}\big)-g\big(\frac{u}{T} \big) \Big) \varphi(u+x-s) \dif s
        \rVert \leq \omega(g,x) \lVert \varphi \rVert_{L^1},
    \end{equation*}
    where the modulus of continuity $\omega(g,x)$ of $g$ at $x$ goes to $0$ as $x \to 0$.
    Secondly, 
    \begin{align}\label{equ:holder_from_here}
        g(\frac{u}{T})
        \int_0^T \varphi(u+x-s)-\varphi(u-s) \dif s
        =
        g(\frac{u}{T})
        \int_u^{u+x} \varphi(s) \dif s.
    \end{align}
    By Hölder's inequality we have some $\nu\geq 1$ such that $\nu^{-1} + (1+\epsilon)^{-1} = 1$ and, for any $v>0$,  $\lVert \int_v^{v+x} \varphi(s) \dif s \rVert \leq x^{\frac{1}{\nu}} \lVert \varphi \rVert_{L^{1+\epsilon}[0,\infty)}$, hence
    \begin{align*}
        \lVert 
        g(\frac{u}{T})
        \int_0^T \varphi(u+x-s)-\varphi(u-s) \dif s
        \rVert 
        \leq 
         x^{\frac{1}{\nu}} 
        \lVert g \rVert_{L^{\infty}[0,1]}
        \lVert \varphi \rVert_{L^{1+\epsilon}[0,\infty)}
        \xrightarrow[x \to 0]{}
        0.
    \end{align*}
    Applying again Young's convolution inequality to the third term on the right-hand side of~\eqref{equ:baseline} yields a bound of the form
    \begin{equation*}
        \Big\lVert 
        \int_0^T
        \Big( 
        g\big(\frac{u+t}{T} \big)-g\big(\frac{u}{T}\big)
        \Big)
        \int_s^{u+x}
        \varphi(u+x-v) K(v,s) \dif v \dif s
        \Big\rVert 
        \leq 
        \omega(g,x) \lVert \varphi 
        \rVert_{L^1[0,\infty)}
        \lVert \Bar{\Psi} \rVert_{L^1[0,\infty)},
    \end{equation*}
    which goes to $0$ as $x \to \infty$. Likewise, using Fubini's Theorem with the fourth term,
    \begin{equation*}
       \lVert 
       \int_0^T g(\frac{u}{T}) \int_s^{s+x} \varphi(u+x-v)K(v,s) \dif v \dif s
       \rVert 
       \leq
       \lVert \Bar{\varphi}\rVert_{L^1[0,\infty)}
       \int_{u-x}^u 
            \rVert \Bar{\Psi}(u-v) \rVert
        \dif v,
    \end{equation*}
    where $\int_{u-x}^u 
            \rVert \Bar{\Psi}(u-v) \rVert
        \dif v = \int_0^x  \rVert \Bar{\Psi}(v) \rVert \dif v \to 0$ as $T \to \infty$, ending the proof.

\end{proof}
    
\begin{proposition}\label{prop:tight}
  Work under Assumptions~\ref{ass:g_and_mu_are_C0} to~\ref{ass:disease}. The family of processes 
  \begin{equation*}
  \Big\{
  T^{-\frac{1}{2}}
  ( 
      N^T_{Tu} 
        -
    \mathbf{E}[N_{Tu}]
), \hspace{0.1cm} u \in [0,1]
\Big\}_{T >0}
  \end{equation*}
  is tight in $D[0,1]$ equipped with the topology of uniform convergence.
\end{proposition}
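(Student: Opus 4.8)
The plan is to exhibit $Z^T_u:=T^{-1/2}\big(N^T_{Tu}-\mathbb E[N^T_{Tu}]\big)$ as the difference of a martingale, to which the functional martingale central limit theorem applies, and a genuinely non-martingale remainder that vanishes uniformly in $u$; the limit points being then continuous and $N^T_0=0$, $C$-tightness in the Skorokhod topology will upgrade to tightness in the uniform topology, as claimed. Concretely, from the proof of Theorem~\ref{theorem:FTCL} and Fubini's theorem, with $\tilde M^T_u:=T^{-1/2}M^T_{Tu}$ the rescaled fundamental martingale, $Z^T_u=\tilde M^T_u+\int_0^u G^T(u,w)\,\dif\tilde M^T_w$ where $G^T(u,w):=\int_{Tw}^{Tu}K^T(t,Tw)\,\dif t$ (null for $w>u$) satisfies the uniform coordinate-wise bound $\|G^T(u,w)\|\le\|\bar\Psi\|_{L^1}$ since $K^T\le\bar\Psi$. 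Splitting $G^T(u,w)=G^T(1,w)-\int_{Tu}^{T}K^T(t,Tw)\,\dif t$ yields $Z^T_u=\mathcal N^T_u-J^T_u$ with
\[
\mathcal N^T_u:=\int_0^u\big(\boldsymbol{Id}+G^T(1,w)\big)\,\dif\tilde M^T_w,\qquad
J^T_u:=\int_0^u\Big(\int_{Tu}^{T}K^T(t,Tw)\,\dif t\Big)\,\dif\tilde M^T_w,
\]
and, using Lemma~\ref{Lemma:Jaisson_like} (so that $\int_0^{Tu}K^T(t,s)\,\dif M^T_s=\mathbb E[\lambda^T_t\mid\mathcal F_{Tu}]-\mathbb E[\lambda^T_t]$ for $t\ge Tu$) together with stochastic Fubini, $J^T_u=T^{-1/2}\big(\mathbb E[N^T_T-N^T_{Tu}\mid\mathcal F_{Tu}]-\mathbb E[N^T_T-N^T_{Tu}]\big)$. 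In particular $\mathcal N^T$ is a stochastic integral of a \emph{deterministic} bounded integrand against $\tilde M^T$, hence a martingale --- the Doob martingale of $Z^T_1=\mathcal N^T_1$.

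I would then establish $C$-tightness of $(\mathcal N^T)_{T>0}$ via the functional martingale central limit theorem: its jumps are bounded by $T^{-1/2}\big(1+\|\bar\Psi\|_{L^1}\big)\to0$, and its bracket $[\mathcal N^T]_u=\int_0^u(\boldsymbol{Id}+G^T(1,w))\,\textup{Diag}\big(T^{-1}\dif N^T_{Tw}\big)\,(\boldsymbol{Id}+G^T(1,w))^{\trans}$ is an increasing process, dominated by $(1+\|\bar\Psi\|_{L^1})^2T^{-1}\textup{Diag}(N^T_{Tu})$, which converges uniformly in probability to a continuous deterministic function: $T^{-1}N^T_{Tw}\to\int_0^w(\boldsymbol{Id}-\boldsymbol K(x))^{-1}\mu(x)\,\dif x$ uniformly by Theorem~\ref{Th:LLN}, $G^T(1,w)\to\Gamma(w)$ for a.e.\ $w$ by the deterministic estimates behind Lemma~\ref{lemma:deterministic_approximation}, and $w\mapsto G^T(1,w)$ is equicontinuous uniformly in $T$ by the argument of Lemma~\ref{lemma:required_for_tightness}. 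A continuous deterministic limiting bracket is exactly what yields $C$-tightness here.

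It remains to prove $\sup_{u\in[0,1]}\|J^T_u\|\to0$ in probability. Writing, for $w\le u$, $\int_{Tu}^{T}K^T(t,Tw)\,\dif t\le\Xi\big(T(u-w)\big)$ with $\Xi(x):=\int_x^{\infty}\|\bar\Psi(y)\|_1\,\dif y$ --- bounded, nonincreasing, vanishing at $+\infty$ --- It\^o's isometry and Corollary~\ref{coro:intensity_bound} give, \emph{uniformly in} $u$, $\mathbb E\|J^T_u\|^2\le C\int_0^1\Xi(Tv)^2\,\dif v=\tfrac{C}{T}\int_0^T\Xi(y)^2\,\dif y\to0$, a Ces\`aro-type decay standing in for the (unavailable under Assumption~\ref{ass:disease} alone) finiteness of the first moment of $\bar\Psi$. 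To reach the supremum, I would fix a mesh-$\delta$ partition $0=u_0<\cdots<u_K=1$ and on each block $[u_k,u_{k+1}]$ split $\int_{Tu}^{T}$ into $\int_{Tu}^{Tu_{k+1}}+\int_{Tu_{k+1}}^{T}$: the pieces whose $w$-variable lies in $[0,u_k]$ are $u$-independent and handled by the preceding $L^2$ bound (now with $\Xi(T(u-w))\le\Xi(T\delta)$), while the piece whose $w$-variable lies in $[u_k,u]$ is, as $u$ ranges over $[u_k,u_{k+1}]$, a martingale with integrand bounded by $\|\bar\Psi\|_{L^1}$, to which Doob's maximal inequality and the elementary increment control of $T^{-1}\textup{Diag}(N^T_{Tw})$ (via Lemma~\ref{Lemma:Jaisson_like} and Corollary~\ref{coro:intensity_bound}) apply; summing over the $K=1/\delta$ blocks, letting $T\to\infty$ and then $\delta\to0$ closes the estimate. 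With $Z^T=\mathcal N^T-J^T$, the two steps above give the proposition, and Corollary~\ref{coro:sqrt_coro} then follows from the second assertion of Proposition~\ref{prop:deterministic_convergence}.

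The step I expect to be the main obstacle is precisely $\sup_u\|J^T_u\|\to0$. Since $J^T$ is \emph{not} a martingale in $u$ --- its integrand genuinely depends on the upper endpoint --- Doob's inequality is unavailable directly, whence the blockwise reduction, and it is the interplay of the Ces\`aro decay of the resolvent tail $\Xi$ with the equicontinuity furnished by Lemma~\ref{lemma:required_for_tightness} that keeps the bounds uniform in $T$; a cruder route through Aldous's criterion instead stumbles on having to bound $T^{-1}\mathbb E\big[\int_{T\tau}^{T\tau+T\theta}\|\lambda^T_s\|_1\,\dif s\big]$ by $C\theta$ uniformly over stopping times $\tau$, which is not readily at hand for Hawkes intensities.
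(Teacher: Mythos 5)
Your decomposition $Z^T_u=\mathcal N^T_u-J^T_u$, with $\mathcal N^T_u=\int_0^u(\boldsymbol{Id}+G^T(1,w))\dif\tilde M^T_w$ a true martingale treated by the martingale FCLT, is a genuinely different route from the paper (which never isolates a Doob martingale; it integrates the Volterra term by parts using the directional-derivative bound of Lemma~\ref{Lemma:IPP_base} and the equicontinuity Lemma~\ref{lemma:required_for_tightness}). The algebra of your decomposition is correct, and the pointwise bound $\mathbb E\lVert J^T_u\rVert^2\le \frac{C}{T}\int_0^T\Xi(y)^2\dif y\to0$ is fine. But the step you yourself flag as the main obstacle, $\sup_{u}\lVert J^T_u\rVert\to0$ in probability, is not established by the blockwise argument you sketch, and as written it fails. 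On a block $[u_k,u_{k+1}]$, the piece with $w$-variable in $[0,u_k]$ and inner integral $\int_{Tu}^{Tu_{k+1}}$ is \emph{not} $u$-independent (the lower limit $Tu$ moves with $u$), and the piece with $w\in[u_k,u]$ is \emph{not} a martingale in $u$ for the same reason; so Doob's maximal inequality does not apply to either. Nonnegativity and domination of the integrand (by $\lVert\bar\Psi\rVert_{L^1}$ or by $\Xi(T(u-w))$) do not transfer to the stochastic integral against the signed measure $\dif\tilde M^T$, and the only pathwise bound available from splitting $\dif M^T=\dif N^T-\lambda^T\dif t$ gives, per block, a term of order $\sqrt T\,\delta$, which diverges as $T\to\infty$ for fixed $\delta$. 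With only second moments and no martingale structure in $u$, pointwise increment bounds of order $\sqrt\delta+o(1)$ do not yield a supremum bound over the continuum, so the key estimate remains unproven. The natural repair is precisely the paper's device: integrate $J^T_u$ by parts in $s$, using $(\partial_s+\partial_t)K^T\le\frac1T\tilde\Psi(t-s)$ from Lemma~\ref{Lemma:IPP_base}, so that $J^T_u$ becomes pathwise integrals of $T^{-\nicefrac12}M^T_s$ against deterministic kernels of uniformly bounded mass, controlled by $\sup_{t\le T}\lVert T^{-\nicefrac12}M^T_t\rVert=\mathcal O_{\Prob}(1)$, tail integrals of $\bar\Psi$, the modulus of continuity of $\tilde M^T$, and the boundary term $\int_0^{T\cdot}K^T(T\cdot,s)\,T^{-\nicefrac12}M^T_s\dif s$, which is where Lemma~\ref{lemma:required_for_tightness} (hence Assumption~\ref{ass:disease}) enters.

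A secondary inaccuracy: you justify convergence of the bracket of $\mathcal N^T$ partly by claiming $w\mapsto G^T(1,w)$ is equicontinuous uniformly in $T$ ``by the argument of Lemma~\ref{lemma:required_for_tightness}''. That lemma concerns $t\mapsto\int_0^tK^T(t,s)\dif s$, a different object, and uniform equicontinuity of $G^T(1,\cdot)$ actually fails at the right endpoint: $G^T(1,1)=0$ while $G^T(1,1-\delta)\to\Gamma(1-\delta)$, of order one. This is repairable — almost-everywhere convergence $G^T(1,w)\to\Gamma(w)$, the uniform bound $\lVert G^T(1,\cdot)\rVert\le\lVert\bar\Psi\rVert_{L^1}$, uniformly bounded variation in $w$, and the uniform law of large numbers of Theorem~\ref{Th:LLN} suffice to pass to the limit in $[\mathcal N^T]_u$ — but the justification as stated is not the right one. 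The gap of the previous paragraph, however, is the substantive one.
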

\begin{proof}
    Recall that for any $T>0$ and $u \in [0,1]$,
    \begin{equation}\label{equ:Rosenbaum_decomposition}
        N^T_{Tu} 
        -
        \mathbf{E}[N_{Tu}]
        =
        M_{Tu}^T
        +
        \int_0^{Tu} \int_0^t K^T(t,s) \dif M^T_s \dif t
        =
        M^T_{Tu} + Y^T_{Tu}.
    \end{equation}
    The martingale $T ^{-\nicefrac{1}{2}}( M^T_{Tu})$ is tight as per its convergence in uniform topology towards a $\Prob$-a.s continuous limit. Hence it suffices to show the tightness of $T^{-\nicefrac{1}{2}} (Y^T_{Tu})$, which reads
    \begin{equation*}
    \lim_{\delta \to 0} \limsup_{T \to \infty}
        \Prob\Big[
        \frac{1}{\sqrt{T}}
        \sup_{\lvert u-v\rvert < \delta} \lVert X^T_{Tv} - X^T_{Tu}\rVert
        > \eta \Big]
        =0 \textup{  for any  }
        \eta >0.
    \end{equation*}

    For any $u\in [0,1]$ then, integrating by parts, 
    \begin{align*}
        Y^T_{Tv}
        =
        \int_0^{Tu}
        \Big( 
            K^T(Tu,s)
            -
            \int_s^{Tu}
            (\partial_s
            +
            \partial_t)
            K^T(t,s) 
        \Big)  
        \frac{M^T_s}{\sqrt{T}}
        \dif s.
    \end{align*}
    Hence for any $(u,v) \in [0,1]^2$,
    \begin{align*}
        Y^T_{Tv}
        -
        Y^T_{Tu}
        &=
        \int_{0}^{Tu}
        \Big( 
            \int_{Tu}^{Tv}
            (\partial_s
            +
            \partial_t)
            K^T(t,s) 
            \dif t
        \Big)  
        \frac{M^T_s}{\sqrt{T}}
        \dif s
        \\
        &-
        \int_{Tu}^{Tv}
        \Big( 
            \int_s^{Tv}
            (\partial_s
            +
            \partial_t)
            K^T(t,s) 
            \dif t
        \Big)  
        \frac{M^T_s}{\sqrt{T}}
        \dif s
        \\
        &+
        \int_{0}^{Tv}
        K^T(Tv,s) 
        \frac{M^T_s}{\sqrt{T}}
        \dif s
        -
        \int_{0}^{Tu}
        K^T(Tu,s) 
        \frac{M^T_s}{\sqrt{T}}
        \dif s.
        \\&=\textbf{\textsc{i}}+\textbf{\textsc{ii}}+\textbf{\textsc{iii}}
    \end{align*}

    Let $\delta>0$. Let $(u,v) \in [0,1]^2$ such that $0 \leq v-u \leq \delta$. Firstly,
    \begin{equation*}
        \textbf{\textsc{i}}
        \leq 
        \sup_{s \in [0,T]} \frac{1}{\sqrt{T}}
        \lVert M^T_s \rVert 
        \int_{Tu}^{Tv}
        T^{-1} \Big( \int_0^{\infty} \lVert \Tilde{\Psi} (s)\rVert \dif s \Big) \dif t
        = \mathcal{O}_{\Prob}(\delta ).
    \end{equation*}

    Secondly, for any $T >0$,
\begin{equation*}
    \textbf{\textsc{ii}}
    \leq 
    \sup_{s \in [Tu,Tv]} 
    \frac{1}{\sqrt{T}} 
    \lVert M^T_s \rVert 
    \frac{1}{\sqrt{T}}
    \int_{Tu}^{\infty} \lVert \Tilde{\Psi} (s) \rVert
    \dif s.
\end{equation*}
Let $\eta \in [0,1)$. When $v \in [0,\eta]$, 
\begin{equation*}
    \textbf{\textsc{ii}}
    \leq 
    \Big( \int_0^{\infty} \lVert \Bar{\Psi}(s) \rVert_1 \dif s
    \Big)
    \frac{1}{\sqrt{T}}
    \sup_{s \in [0,\eta T]}
    \lVert M^T_s \rVert_1 = \mathcal{O}(\eta),
\end{equation*}
whereas if $v \in (\eta,1]$,
\begin{equation*}
    \textbf{\textsc{ii}}
    \leq 
    \Big( \int_{Tu}^{\infty} \lVert \Bar{\Psi}(s) \rVert_1 \dif s
    \Big)
    \frac{1}{\sqrt{T}}
    \sup_{s \in [0,T]}
    \lVert M^T_s \rVert_1 = \mathcal{O}\big(\int^{\infty}_{(\eta-\delta) T} \lVert \Bar{\Psi}(s) \rVert \dif s \big).
\end{equation*}
The second term $\textbf{\textsc{ii}}$ is thus arbitrarily small as $T \to \infty$. We may in fact proceed in a similar fashion for the third and remaining term. Let $\eta \in (0,1]$. On one hand,
\begin{align*}
     \lVert\int_{T( v \land \eta)}^{Tv} 
    K^T(Tv,Tv-s) \frac{M^T_{Tv-s}}{\sqrt{T}}
    \dif s
    \rVert 
    &\leq 
    \lVert
    \int_{T( v \land \eta)}^{Tv} K^T(Tv,Tv-s) \dif s
    \rVert
    \sup_{t \in[0,T]}
    \lVert \frac{M^T_{Tv-s}}{\sqrt{T}}
    \rVert 
    \\
    &\leq 
    \int_{T( v \land \eta)}^{Tv} 
    \lVert \Bar{\Psi}(s) \rVert  
    \dif s
    \sup_{t \in[0,T]}
    \lVert \frac{M^T_{t}}{\sqrt{T}}
    \rVert
    \\
    &\leq
    \Big( 
        \int_{T \eta}^{\infty} 
            \lVert \Bar{\Psi}(s) \rVert  
         \dif s 
    \Big)
    \sup_{t \in[0,T]}
    \lVert \frac{M^T_{t}}{\sqrt{T}}
    \rVert,
\end{align*}
and likewise,
\begin{align*}
     \lVert\int_{T( u \land \eta)}^{Tu} 
    K^T(Tu,Tu-s) \frac{M^T_{Tu-s}}{\sqrt{T}}
    \dif s
    \rVert 
    \leq 
    \Big(  
        \int_{T (\eta-\delta)}^{\infty} 
            \lVert \Bar{\Psi}(s) \rVert  
        \dif s
    \Big) 
    \sup_{t \in[0,T]}
    \lVert \frac{M^T_{t}}{\sqrt{T}}
    \rVert,
\end{align*}

where $ \sup_{t \in[0,T]}
    \lVert T^{-\nicefrac{1}{2}}M^T_{t} \rVert = \mathcal{O}_{\Prob}(1) $ and the two bounds hereinabove vanish as $T \to \infty$. On the other,
    \begin{align*}
        \int_0^{T(u\land \eta)} K^T(Tu,Tu-s) \frac{M^T_{Tu-s} }{\sqrt{T}} \dif s
        &-
        \Big( \int_0^{T(u\land \eta)} K^T(Tu,Tu-s) \dif s \Big) \frac{M^T_{Tu} }{\sqrt{T}} 
        \\
        &\leq 
        \Big (\int_0^{\infty} \lVert \Bar{\Psi}(s) \rVert \dif s\Big)
        \sup_{ \lvert x - y \rvert \leq \eta}
        \Big\lVert \frac{M^T_{Tx}-M^T_{Ty}}{\sqrt{T}} 
        \Big\rVert,
    \end{align*}
    which can be made arbitrarily small as $\eta$ is chosen close to $0$ on account of the tightness of $(T^{-\nicefrac{1}{2}} M^T_{Tu})$. A similar Cesàro argument shows that 
    \begin{equation*}
        \sup_{x \in [0,1]} 
        \lVert  \Big( \int_0^{T( x\land \eta)} K^T(Tx,Tx-s) \dif s \Big) \frac{M^T_{Tx} }{\sqrt{T}} 
        -
        \Big( \int_0^{T} K^T(Tx,Tx-s) \dif s \Big) \frac{M^T_{Tx} }{\sqrt{T}}  \rVert \xrightarrow[T \to \infty]{\Prob} 0,
    \end{equation*}
    and one retrieves
    \begin{equation*}
        \sup_{ \lvert u-v \rvert < \delta} \Big\lVert \textbf{\textsc{iii}} - 
        \Big\{ \Big(  \int_0^T K^T(Tv,s) \dif s \Big) 
         \frac{M_{Tv}}{\sqrt{T}} 
        -
        \Big( \int_0^T K^T(Tu,s)  \dif s \Big)\frac{M_{Tu}}{\sqrt{T}} \Big\}
        \Big\rVert \xrightarrow[T \to \infty]{\Prob} 0.
    \end{equation*}
    Then, $\lVert (  \int_0^T K^T(Tv,s) \dif s ) 
         T^{-\nicefrac{1}{2}} M^T_{Tv} 
        -
        ( \int_0^T K^T(Tu,s)  \dif s )T^{-\nicefrac{1}{2}} M^T_{Tu} \rVert  $ is bounded by
        \begin{align*}
            \sup_{ \lvert x - y \rvert \leq \delta}\big\lVert 
             \int_0^T K^T(Ty,s) \dif s  
             -
              \int_0^T K^T(Tx,s) \dif s  
              \big\rVert 
      &\sup_{t \in[0,T]}
        \lVert \frac{M^T_{t}}{\sqrt{T}}
        \rVert
        \\
        &+
       \Big(  \int_0^{\infty} \lVert \Bar{\Psi}(s) \rVert \dif s \Big)
       \sup_{ \lvert x - y \rvert \leq \delta}
        \Big\lVert \frac{M^T_{Tx}-M^T_{Ty}}{\sqrt{T}} 
        \Big\rVert,
        \end{align*}
        for any $T>0$ and any $(u,v) \in [0,1]^2$ such that $\lvert v - u \rvert  < \delta$. The desired result then deduces from Lemma~\ref{lemma:required_for_tightness} and the tightness of 
 $(T^{-\nicefrac{1}{2}} M^T_{Tu})$.
    
\end{proof}

\subsection{Further remarks on Theorem~\ref{theorem:FTCL}}\label{section:further_remarks}\label{section:multivariate_g}
Following remark~\ref{remark:multivariate_g}, replacing $k^T_{ij}(t,s)=g(\frac{t}{T}) \varphi_{ij}(t-s)$ with the coordinate-dependent kernel $k^T_{ij}(t,s)=g_{ij}(\frac{t}{T})  \varphi_{ij}(t-s)$ wherein $g \colon [0,1] \mapsto \mathcal{M}_p(\R^+)$ is only a matter of rephrasing: we have only relied on the regularity properties of $g$ and the existence of a resolvent $\sum_{i=0}^{\infty } (k^T)^{\star i}$, which is guaranteed under the condition that  $( \lVert g_{ij}\rVert_{L^{\infty}} \int_0^{\infty} \varphi_{ij}(s) \dif s )_{ij}$ has spectral radius below $1$. This is with the exception of Lemma~\ref{Lemma:IPP_base}, where the recursion step and thus the constant in our bound should be modified into
\begin{equation*}
 (\partial_t + \partial_s )(k^T)_{ij}^{\star n} (t,s)
    \leq 
    C( g,g') \frac{n}{T} \bar{\varphi}_{ij}^{ \star n}(t-s)
\end{equation*}
where $\bar{\varphi}_{ij}(t) = \lVert g_{ij} \rVert_{L^{\infty}} \varphi_{ij}(t)$ and $C( g,g') = \max_{ij} \lVert g_{ij}' \rVert_{L^{\infty}}\lVert g_{ij} \rVert_{L^{\infty}}^{-1} \mathbb{1}_{ g_{ij} \neq 0}$. In addition to this technical detail, some increased attention must then be placed on the non-commutativity of $g$ in the $\star$-iterates of the kernel. This results in more cumbersome notation for the bounds appearing throughout our proofs, hence we have kept our main results in the context of an univariate reproduction rate for the sake of simplicity. 

\begin{ack*}
     Thomas Deschatre and Pierre Gruet acknowledge support from the \textit{FiME} Lab.
\end{ack*}

\bibliographystyle{plain}
\bibliography{bibli}

\end{document}